\titleformat{\section}{\vskip10pt\large\bfseries}{\thesection.}{0.5em}{\centering\vspace{5pt}}
\titleformat{\subsection}{\vskip10pt\normalsize\bfseries}{\thesubsection.}{0.5em}{}
\newtheorem{theorem}{Theorem}[section]
\newtheorem{lemma}[theorem]{Lemma}
\theoremstyle{definition}
\newtheorem{example}[theorem]{Example}
\def\R{{\mathbb R}}
\def\d{{\mathrm d}}
\def\endproof{\qed}
\numberwithin{equation}{section}
\begin{document}

\title[]{Analysis of fully discrete finite element methods for 2D Navier--Stokes equations with critical initial data }

\author[]{\,\,Buyang Li}
\address{\hspace*{-12pt}Buyang Li: 
Department of Applied Mathematics, The Hong Kong Polytechnic University,
Hung Hom, Hong Kong. {\it E-mail address}: {\tt buyang.li@polyu.edu.hk} }

\author[]{\,\,Shu Ma}
\address{\hspace*{-12pt}Shu Ma: 
Department of Applied Mathematics, The Hong Kong Polytechnic University,
Hung Hom, Hong Kong. {\it E-mail address}: {\tt maisie.ma@connect.polyu.hk} }

\author[]{\,\,Yuki Ueda}
\address{\hspace*{-12pt}Yuki Ueda: Waseda Research Institute for Science and Engineering, Faculty of Science and Engineering, Waseda University, Japan. {\it E-mail address}: {\tt yuki.ueda@aoni.waseda.jp}}

\subjclass[2010]{65M12, 65M15, 76D05}


\keywords{Navier--Stokes equations, $L^2$ initial data, semi-implicit Euler scheme, finite element method, error estimate}

\maketitle
\begin{abstract}\noindent 
First-order convergence in time and space is proved for a fully discrete semi-implicit finite element method for the two-dimensional Navier--Stokes equations with $L^2$ initial data in convex polygonal domains, without extra regularity assumptions or grid-ratio conditions. The proof utilises the smoothing properties of the Navier--Stokes equations in the analysis of the consistency errors, an appropriate duality argument, and the smallness of the numerical solution in the discrete $L^2(0,t_m;H^1)$ norm when $t_m$ is smaller than some constant. Numerical examples are provided to support the theoretical analysis. 
\end{abstract}



\section{\bf Introduction}\label{sec:intr}


We consider the initial and boundary value problem of the incompressible Navier--Stokes (NS) equations  
\begin{equation}
\label{pde}
\left \{
\begin{aligned} 
\partial_t u + u\cdot\nabla u -\varDelta u
+\nabla p &= 0 && \mbox{in}\,\,\,  \varOmega\times (0,T] ,\\
\nabla\cdot u&=0&&\mbox{in}\,\,\,  \varOmega\times (0,T] ,\\
u&=0 && \mbox{on}\,\,\, \partial\varOmega\times (0,T] ,\\
u&=u^0 && \mbox{at}\,\,\, \varOmega\times \{0\} , 
\end{aligned}
\right .
\end{equation}
in a convex polygon $\varOmega\subset\R^2$ with boundary $\partial\varOmega$, up to a given time $T>0$. 
It is known that for any given initial value 
$$
u^0\in \dot L^2=\{v\in L^2(\Omega)^2: \nabla\cdot v=0\,\,\,\mbox{and}\,\,\, v\cdot \nu=0\,\,\,\mbox{on}\,\,\,\partial\Omega\} 
$$ 
(where $\nu$ denotes the unit normal vector on $\partial\Omega$), 
problem \eqref{pde} has a unique weak solution $u\in L^2(0,T;\dot H_0^1)\cap H^1(0,T;\dot H^{-1})\hookrightarrow C([0,T];\dot L^2)$, where 
$$
\dot H_0^1 =\{v\in H^1_0(\Omega)^2:\nabla\cdot v=0\} \quad\mbox{and}\quad
\mbox{$\dot H^{-1}$ denotes the dual space of $\dot H_0^1$};
$$
see \cite[Theorem 3.2 of Chapter 3]{Temam-1977}.

Stability and convergence of the numerical solution to the NS equations \eqref{pde} were studied based on different regularity assumptions on the solution and initial data. In particular, if the initial data are sufficiently smooth, i.e. $u^0\in \dot H_0^1\cap H^2(\Omega)^2$ or above, then the numerical solution was proved to be convergent with optimal order for finite element and spectral Galerkin methods with different time-stepping schemes, including the Crank--Nicolson method \cite{Heywood-Rannacher-1990}, the implicit-explicit Crank--Nicolson/Adams--Bashforth method \cite{marion-1998-navier,tone-2004-error,he-2007-stability}, the semi-implicit Crank-Nicolson extrapolation method \cite{baker-1976-galerkin, guo-2018-unconditional, sun-2011-error, ingram-2013-new}, 
the 
stabilization methods based on the Crank--Nicolson method \cite{ervin-2012-numerical}, the Crank--Nicolson extrapolation method \cite{labovsky-2009-stabilized}, the backward Euler method \cite{decaria-2018-time} and the BDF2 method \cite{layton-2014-numerical}, 
the projection methods \cite{shen-1992-error-1, shen-1992-error-2, achdou-2000-convergence},
the projection-based variational multiscale methods based on the Crank--Nicolson method \cite{shan-2013-numerical},
the fractional-step methods \cite{badia-2007-convergence}, 
the three-step implicit-explicit backward extrapolating scheme \cite{baker-1982-higher,wang-2012-convergence}, the backward differentiation formulae \cite{badia-2006-analysis, bermejo-2012-second, de-2009-postprocessing}, 
a second order energy- and helicity-preserving method \cite{rebholz-2007-energy}, 
the implicit-explicit Euler method \cite{marion-1998-navier},
and the implicit Euler methods with different spatial discretizations \cite{de-2009-postprocessing, de-2019-error,garcia-2020-symmetric, kaya-2005-discontinuous}.  The error estimates in the above-mentioned articles do not apply to nonsmooth initial data.

If the initial value $u^0$ is only in $\dot H_0^1$, the accuracy of second- or higher-order time-stepping schemes for the NS equations is often reduced. 
For the semidiscrete finite element method (FEM), it was shown in \cite{hill-2000-approximation} that the $L^2$-norm error bound at time $t$ is of $O(t^{-1/2}h^2)$, where $h$ denotes the mesh size of the finite elements. For fully discrete FEMs, 
the linearized Crank--Nicolson scheme was proved $1.5$th-order convergent in time \cite{he-2012-crank}, 
and the semi-implicit Euler scheme with spectral Galerkin method was proved first-order convergent under a CFL condition $ \tau \ln(\lambda_m/\lambda_1) \le \kappa$ in \cite{he-2005-stability},
where $\tau$ is the time stepsize and $\lambda_m$ is the maximal eigenvalue of the Stokes operator used by the spectral method (and $\kappa$ is a positive constant). 

%
%

The error estimates in the above-mentioned articles all require the initial value to be strictly smoother than $\dot L^2$, which is known to be a critical space for the 2D NS equations, a maximal Sobolev space on which well-posedness of the 2D NS equations is proved; see \cite{Gallagher2016}. 
As a result, the error analysis in this case turns out to be much more challenging than that for smoother initial data. To the best of our knowledge, for $\dot L^2$ initial data, the only error estimate for the NS equations is in \cite{he-2008-stability} for a semi-implicit Euler scheme with a spectral Galerkin method in space using the eigenfunctions and eigenvalues of the continuous Stokes operator. 
Under a CFL condition $\tau \le \kappa \lambda_M^{-1}$, it is shown in \cite{he-2008-stability} that the backward Euler spectral Galerkin method with time stepsize $\tau$ and maximal eigenvalue $\lambda_M$ has an error bound $O(\lambda_M^{-1/2}+\tau^{1/2})$ on a bounded time interval. 
For the backward Euler scheme with finite element methods (FEMs) in space, several stability results were proved in \cite{he-2007-stabilized} without error estimates. 
Overall, first-order convergence of the implicit or semi-implicit Euler methods and error estimates of fully discrete FEMs for the NS equations with $\dot L^2$ initial data still remains open. 


In this article, we prove the first-order convergence of a fully discrete FEM for the 2D NS equations with $\dot L^2$ initial data, using semi-implicit Euler method in time and an inf-sup stable pair of finite element spaces with divergence-free velocity field, i.e.,
\bigskip
\begin{align*}
\|u_h^n-u(t_n) \|_{L^2} 
\le 
C  (t_n^{-1} \tau_n + t_n^{-\frac12} h )
\quad \mbox{for}\,\,\,
t_n\in(0,T] , \\[-5pt]
\end{align*} 
where $u_h^n$ denotes the numerical solution at time level $t=t_n$. 
The main difficulty in analysing numerical methods for the NS equations with $\dot L^2$ initial data is to control the nonlinear terms appearing in the error analysis by very weak bounds of the numerical solution, in the presence of singular consistency errors (see Lemma \ref{LM:Consistency-Euler}). 
We overcome these difficulties by utilising the $O(t^{m})$-weighted $L^2$ estimates of the $m$th-order time derivative and $2m$th-order spatial derivatives (as shown in Lemma \ref{lemma:regularity-u}) and a duality argument with variable temporal stepsizes to resolve the initial singularity in the consistency errors (as shown in Section \ref{section:duality}). 
It is known that variable stepsizes can help resolve the singularity in proving convergence of exponential integrators for semilinear parabolic equations with nonsmooth initial data; see \cite{Li-Ma-JSC}. However, the error analysis for the NS equations turns out to be completely different from the error analysis for the semilinear parabolic equation due to the lack of Lipschitz continuity of the nonlinearity and the critical nature of the $\dot L^2$ space. This leads to the critical difficulty in the use of duality argument --- the lack of stronger norms than $L^2(0,T;H^1)$ to help control the nonlinear terms, as shown in \eqref{L2-L2-ehn}, where the the first term on the right-hand side of \eqref{L2-L2-ehn} has to be absorbed by the left-hand side. This difficulty is overcome by proving the smallness of the numerical solution in the discrete $L^2(0,t_m;H^1)$ norm when $t_m$ is smaller than some constant independent of the stepsize $\tau$ and mesh size $h$, as shown in Lemma \ref{Lemma:L2L2uhn}. 

The rest of this article is organised as follows. In Section \ref{sec:main_results}, we describe the finite element method and time-stepping scheme to be analysed in this article, and present the main theorem of this article. 
%
%
The proof of the main theorem is presented in Section \ref{sec:convergence_fully_FEM_Euler}. 
%
The proof of two technical lemmas, including the temporally weighted regularity results and the strong and weak convergence of the numerical solution, are presented in Appendix. 


\newpage
\section{The main result}\label{sec:main_results}

For $s\in\R$ and $1\le p\le \infty$, we denote by $W^{s,p}(\varOmega)$ the conventional Sobolev spaces of functions defined on $\varOmega$, with abbreviation $H^s(\varOmega)=W^{s,2}(\varOmega)$ and $L^p(\varOmega)=W^{0,p}(\varOmega)$. 
For the simplicity of notation, we denote by $\|\cdot\|_{W^{s,p}}$ the norm of the spaces $W^{s,p}(\varOmega)$, $W^{s,p}(\varOmega)^2$ and $W^{s,p}(\varOmega)^{2\times 2}$, omitting the dependence on $\varOmega$ and dimension. 

The natural function spaces associated to incompressible flow are the divergence-free subspaces of $L^2(\varOmega)^2$ and $H^1_0(\varOmega)^2$, denoted by 
$$X=\dot L^2\quad\mbox{and}\quad V=\dot H^1_0 ,$$ 
respectively, as defined in the introduction section. 
We denote by $P_X$ the $L^2$-orthogonal projection from $L^2(\varOmega)^2$ onto $\dot L^2$, and denote by 
$$A=P_X\Delta$$ the Stokes operator on $\dot L^2$ with domain $D(A) = \dot H^1_0\cap H^2(\varOmega)^2$, which is a self-adjoint operator on $\dot L^2$ and bounded above by $0$ (negative definite). 
The Stokes operator has an extension as a bounded operator $A: \dot H^1_0\rightarrow \dot H^{-1}$ defined by 
\begin{align}\label{def-Stokes-A}
(Av,w) =-\int_{\varOmega} \nabla v\cdot \nabla w\,\d x \quad\forall\, v,w\in \dot H^1_0 .
\end{align}
Correspondingly, the NS equations \eqref{pde} can be equivalently written into the abstract form: 
\begin{equation}
\label{pde_abstract}
\left \{
\begin{aligned} 
\partial_tu(t) + P_X(u(t)\cdot\nabla u(t)) - Au(t) &= 0  \quad \mbox{for}\,\,\, t\in(0,T],\\
u(0)&=u^0 . 
\end{aligned}
\right .
\end{equation}

Henceforth we use the common notation $(\cdot,\cdot)$ to denote the inner products of the Hilbert spaces $L^2(\varOmega)$, $L^2(\varOmega)^2$ and $L^2(\varOmega)^{2\times 2}$, and define 
$$L^2_0(\varOmega):=\{ v\in L^2(\varOmega): \mbox{$\int_{\varOmega}$} v\,\d x=0\} .$$ 

Let $V_h\times Q_h \subset H^1_0(\varOmega)^2\times L^2_0(\varOmega)$ be a pair of finite element spaces with the following properties: 
\begin{enumerate}[label={\rm(\arabic*)},ref=\arabic*,topsep=2pt,itemsep=0pt,partopsep=1pt,parsep=1ex,leftmargin=25pt]

\item[(P1)] There exists a linear projection operator $\Pi_h: H^1_0(\varOmega)^2\rightarrow V_h$ such that 
\begin{enumerate}
\item[(i)]
$\nabla\cdot \Pi_hv = P_{Q_h} \nabla\cdot v$ for $v\in H^1_0(\varOmega)^2$, where $P_{Q_h}:L^2_0(\varOmega)\rightarrow Q_h $ denotes the $L^2$-orthogonal projection. 

\item[(ii)]
The following approximation property holds for $v\in H^1_0(\varOmega)^2\cap H^m(\varOmega)^2$:
\begin{align}\label{Fortin-Error} 
\|v-\Pi_hv\|_{H^{s}(\varOmega)} 
\le Ch^{m-s}\|v\|_{H^{m}(\varOmega)},\quad 0\le s\le 1,\quad 1\le m\le 2.
\end{align}

\end{enumerate}

\item[(P2)] $\nabla\cdot v_h\in Q_h $ for $v_h\in V_h$. 

\end{enumerate}
The two properties above guarantee the inf-sup condition for the pair $V_h\times Q_h $  (see \cite{2014-Guzman-Neilan}), i.e., 
\begin{align}\label{inf-sup}
\|q_h\|_{L^2(\varOmega)}
\le 
\sup_{v_h\in V_h \backslash \{0\} }
\frac{C(\nabla\cdot v_h,q_h)}{\|v_h\|_{H^1}} \quad\forall\,q_h\in Q_h .
\end{align}

Since $\nabla\cdot v_h\in Q_h $ for $v_h\in V_h$, it follows that the discrete divergence-free subspace of $V_h$ coincides with its pointwise divergence-free subspace, i.e., 
\begin{align}\label{def-Xh}
X_h:=\{v_h\in V_h: (\nabla\cdot v_h, q_h) =0\,\,\, \forall\, q_h\in Q_h \} 
= \{v_h\in V_h: \nabla\cdot v_h=0\}  .
\end{align}
Hence, $$V_h\subset H^1_0(\Omega)^2\quad\mbox{and}\quad X_h\subset \dot H^1_0\subset \dot L^2 = X ,\quad\mbox{but}\quad 
V_h\not\subset \dot H^1_0 .$$ 

There exists several finite element spaces $V_h\times Q_h $ satisfying properties (P1)--(P2). An example was constructed in \cite{2014-Guzman-Neilan}, where $V_h$ consists of piecewise linear polynomials plus quadratic bubble functions, and $Q_h$ is simply the space of piecewise constants with vanishing integral over $\varOmega$. 
Another example is the Scott--Vogelius element space proposed in \cite{scott-1985-norm}, where $V_h$ is the space of continuous piecewise polynomials of degree $k\ge 4$, and $Q_h$ is the space of discontinuous piecewise polynomials of degree $k-1$. The Scott--Vogelius element space was proved to be inf-sup stable in \cite{Guzman-Scott-2019}, and therefore property (P1) is satisfied if $\Pi_h$ is simply chosen to be the Stokes--Ritz projection; see \cite[Proposition 4.18]{Ern-Guermond-2004}. 

We consider the following semidiscrete FEM for \eqref{pde}: for given $u^0\in \dot L^2$, find 
$
(u_h,p_h)\in C^1([0,T];V_h)\times C([0,T];Q_h )
$ 
such that 
\begin{align}\label{FEM}
\left\{\begin{aligned}
(\partial_t u_h,v_h) + (u_h\cdot\nabla u_h,v_h) 
+(\nabla u_h,\nabla v_h) 
- (p_h,\nabla\cdot v_h) &= 0 \\[5pt] 
(\nabla\cdot u_h,q_h) &=0 \\[5pt]
u_h(0) &= u_h^0 : =P_{X_h}u^0 ,
\end{aligned}
\right.
\end{align}
for all test functions $(v_h,q_h)\in V_h\times Q_h $ and $t\in(0,T]$, where $P_{X_h}:\dot L^2\rightarrow X_h$ denotes the $L^2$-orthogonal projection onto $X_h$. This is equivalent to computing $u_h^0\in V_h$ by the weak formulation (with an auxiliary function $\eta_h\in Q_h$)
\begin{align}\label{FEM-u0h}
\left\{\begin{aligned}
(u^0-u_h^0,v_h) - (\eta_h,\nabla\cdot v_h) &= 0 \\
(\nabla\cdot u_h^0,q_h) &=0 
\quad\mbox{for all test functions}\,\,\, (v_h,q_h)\in V_h\times Q_h .
\end{aligned}
\right.
\end{align}
%
If we denote by $A_h:X_h\rightarrow X_h$ the discrete Stokes operator defined by 
\begin{align}\label{def-Ah}
(A_hv_h,w_h)=-(\nabla v_h,\nabla w_h),\quad\forall\,v_h,w_h\in X_h. 
\end{align}
Then the semi-discrete problem \eqref{FEM} is equivalent to find $u_h\in C^1([0,T];X_h)$ such that 
\begin{align}\label{AFEM}
\left\{\begin{aligned}
\partial_tu_h+P_{X_h}(u_h\cdot\nabla u_h)-A_hu_h&=0\quad\mbox{for}\,\,\, t\in(0,T], \\
u_h(0)&=u_h^0 . 
\end{aligned}\right.
\end{align}

Let $0=t_0<t_1<\cdots<t_N=T$ be a partition of the time interval $[0,T]$ with stepsize 
\begin{align}\label{stepsize}
\tau_1\sim\tau_2\quad\mbox{and}\quad \tau_n=t_n-t_{n-1} \sim (t_{n-1}/T)^\alpha \tau \,\,\,\,\mbox{for}\,\,\,\, 2\le n\le N,
\end{align}
where $\tau$ is the maximal stepsize, and `$\sim$' means equivalent magnitude (up to a constant multiple). The stepsizes defined in this way has the following properties:
\begin{enumerate}
\item
$\tau_n\sim \tau_{n-1}$ for two consecutive stepsizes. 

\item 
$\tau_1=\tau^{\frac{1}{1-\alpha}}$. Hence, the starting stepsize is much smaller than the maximal stepsize. This can resolve the solution's singularity at $t=0$. 

\item
The total number of time levels is $O(T/\tau)$. Hence, the total computational cost is equivalent to using a uniform stepsize $\tau$. 

\end{enumerate}



With the nonuniform stepsizes defined above,
we consider the following fully discrete semi-implicit Euler FEM: Find $(u_h^n,p_h^n)\in V_h\times Q_h$, $n=1,\dots,N$, such that 
\begin{align*}
\left\{\begin{aligned}
\bigg(\frac{ u_h^n - u_h^{n-1} }{\tau_n}, v_h\bigg)
+ (\nabla u_h^n, \nabla  v_h)
+ (u_h^{n-1}\cdot\nabla u_h^n, v_h) - (p_h^n,\nabla\cdot v_h) &= 0 &&\forall\, v_h\in V_h,   \\        
(\nabla\cdot u_h^n,q_h)&=0  &&\forall\, q_h\in Q_h,\\
u_h^0&=P_{X_h}u^0 . 
\end{aligned}\right.
\end{align*}
This is equivalent to finding $u_h^n\in X_h$, $n=1,\dots,N$, such that 
\begin{align}\label{fully-FEM-Euler}
\left\{\begin{aligned}
\bigg(\frac{ u_h^n - u_h^{n-1} }{\tau_n}, v_h\bigg)
+ (\nabla u_h^n, \nabla  v_h)
+ (u_h^{n-1}\cdot\nabla u_h^n, v_h) &= 0 \quad\forall\, v_h\in X_h,\,\,\,  \\        
u_h^0&=P_{X_h}u^0 , 
\end{aligned}\right.
\end{align}
which can also be written into an abstract form by using the operator $A_h$ defined in \eqref{def-Ah}, i.e.,
\begin{align}\label{semi-Euler}
\frac{ u_h^n - u_h^{n-1} }{\tau_n}
- A_h u_h^n 
+P_{X_h}(u_h^{n-1}\cdot\nabla u_h^n) = 0 
\quad\mbox{for}\,\,\, 1\le n\le N .  
\end{align}
In view of \eqref{def-Xh}, there holds $\nabla\cdot u_h^n=0$ and therefore $(u_h^{n-1}\cdot\nabla u_h^n,u_h^n) =0$ similarly as the continuous solution. As a result of this identity, substituting $v_h=u_h^n$ into \eqref{fully-FEM-Euler} immediately yields unconditional energy stability of the semidiscrete FEM.

The main result of this article is the following theorem, which provides the convergence of the fully discrete method \eqref{fully-FEM-Euler}.
\begin{theorem}\label{THM:FEM-Euler}
Let $u^0\in \dot L^2$ and assume that the finite element space $X_h\times Q_h$ has properties {\rm(P1)--(P2).} Then, when the time stepsizes satisfy \eqref{stepsize} with a fixed constant $\alpha\in(\frac12,1)$, the fully discrete solution given by \eqref{fully-FEM-Euler} has the following error bound: 
\begin{align}\label{Fully_Error_bound}
\|u_h^n-u(t_n) \|_{L^2} 
\le 
C (t_n^{-1}  \tau_n + t_n^{-\frac12}  h ),
\end{align}
where the constant $C$ depends only on $u^0$, $\Omega$ and $T$ {\rm(}independent of $t_n\in(0,T]$, $\tau$ and $h)$. 
\end{theorem}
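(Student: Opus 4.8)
The plan is to split the error as $u_h^n - u(t_n) = \theta^n + \rho^n$, where $\rho^n := P_{X_h}u(t_n) - u(t_n)$ is a projection error and $\theta^n := u_h^n - P_{X_h}u(t_n)$ is the genuinely discrete part (here one takes $P_{X_h}$, or the Stokes--Ritz projection, mapping onto $X_h$). The projection part is controlled directly: combining the approximation property \eqref{Fortin-Error} with the weighted $H^2$ regularity furnished by Lemma \ref{lemma:regularity-u} (which gives $\|u(t)\|_{H^2}\le Ct^{-1}\|u^0\|_{L^2}$) yields $\|\rho^n\|_{L^2}\le Ct_n^{-1/2}h$, so the remaining task is to estimate $\|\theta^m\|_{L^2}$ with the same sharp weights.

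Next I would derive the error equation for $\theta^n$ by inserting $P_{X_h}u(t_n)$ into the abstract scheme \eqref{semi-Euler} and subtracting. This produces on the right-hand side a sum of consistency errors: a temporal truncation term $\frac{u(t_n)-u(t_{n-1})}{\tau_n}-\partial_t u(t_n)$, a spatial consistency term arising from the difference between the discrete and continuous Stokes operators, and a nonlinear consistency term comparing $u_h^{n-1}\cdot\nabla u_h^n$ with $u(t_n)\cdot\nabla u(t_n)$. Each of these blows up as $t_n\to 0$, but the content of Lemma \ref{LM:Consistency-Euler}, together with the weighted bounds of Lemma \ref{lemma:regularity-u} (for instance $\|\partial_t^2 u(t)\|_{L^2}\le Ct^{-2}\|u^0\|_{L^2}$), is that once these terms are weighted and summed they stay controllable. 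This is exactly where the variable stepsize rule \eqref{stepsize} with $\alpha\in(\tfrac12,1)$ is used: it makes the relevant sums $\sum_n\tau_n\,t_{n-1}^{-\beta}$ converge with the correct power of $t_m$, generating the factors $t_m^{-1}\tau_m$ and $t_m^{-1/2}h$.

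To extract the $L^2$ bound on $\theta^m$ with these weights I would run the duality argument of Section \ref{section:duality}. One introduces a backward-in-time linearized (adjoint Oseen--Stokes) problem whose terminal data is chosen so that pairing its discrete solution against the error equation reproduces $\|\theta^m\|_{L^2}$ on the left, while the right-hand side becomes the consistency errors tested against the dual solution. Since the dual solution enjoys its own smoothing estimates (reading time backward from $t_m$), these pairings reproduce exactly the claimed orders, with the variable stepsizes again resolving the singularity at $t=0$.

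The main obstacle, and the crux of the whole argument, is controlling the nonlinear contributions inside this duality argument. Because $u^0$ lies only in the critical space $\dot L^2$, no norm stronger than $L^2(0,t_m;H^1)$ is available for $u_h$ or $\theta$, so the convective bilinear form produces the term displayed in \eqref{L2-L2-ehn} that cannot be bounded by the data and must instead be absorbed into the left-hand side. I would overcome this precisely as announced in the introduction: invoke Lemma \ref{Lemma:L2L2uhn} to obtain smallness of $u_h$ in the discrete $L^2(0,t_m;H^1)$ norm for $t_m$ below a threshold $t_{m_0}$ independent of $\tau$ and $h$, so that the offending term is absorbed on $[0,t_{m_0}]$. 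I would then bootstrap over the remaining finitely many subintervals, on which the solution and its discrete counterpart are smooth, handling the convection there by a standard discrete Gronwall inequality, to propagate the estimate to every $t_n\in(0,T]$. Adding back $\|\rho^n\|_{L^2}$ then yields \eqref{Fully_Error_bound}.
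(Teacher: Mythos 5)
Your proposal follows essentially the same route as the paper: the splitting via $P_{X_h}u(t_n)$, the consistency estimates from Lemma \ref{LM:Consistency-Euler} combined with the weighted regularity of Lemma \ref{lemma:regularity-u}, the duality argument with the variable stepsizes \eqref{stepsize}, absorption of the convective term through the smallness result of Lemma \ref{Lemma:L2L2uhn} on $[0,T_*]$, and a discrete Gronwall argument to reach $T$. The only small imprecision is that the paper's duality argument first yields the discrete $L^2(0,t_m;L^2)$ bound \eqref{L2-L2-ehn-f}, which is then upgraded to the pointwise-in-time estimate \eqref{L2-Error-T*} by a separate energy argument with a temporal cut-off function; this does not change the substance of your plan.
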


The proof of Theorem \ref{THM:FEM-Euler} is presented in section~\ref{sec:convergence_fully_FEM_Euler}. For the simplicity of notation, we denote by $C$ a generic positive constant that may be different at different occurrences and may depend on $u^0$, $\Omega$ and $T$, but is independent of $\tau$ and $h$. 

\section{Proof of Theorem \ref{THM:FEM-Euler}}\label{sec:convergence_fully_FEM_Euler}

\subsection{Preliminary results}

In this subsection, we present some technical inequalities and regularity results that will be used in the error analysis for the NS equations. 

First, the following interpolation inequalities, which hold in general convex polygons (cf. \cite[Theorem 5.8, Theorem 5.9]{AdamsFournier2003}) and were often used in analysis of NS equations in the literature (e.g., \cite[Chapter III, \textsection 3.3]{Temam-1977}) and will also be used in this article: 
\begin{align}
&\|v\|_{L^4} \le C\|v\|_{L^2}^{\frac12} \|\nabla v\|_{L^2}^{\frac12} 
&&\hspace{-50pt}\forall\, v\in H^1_0(\varOmega) , 
\label{L4-L2-H1} \\
&\|\nabla v\|_{L^4} \le C\|\nabla v\|_{L^2}^{\frac12} \|\Delta v\|_{L^2}^{\frac12}  
&&\hspace{-50pt}\forall\, v\in H^1_0(\varOmega)\cap H^2(\varOmega),
\label{W14-H1-H2}\\
&\|v\|_{L^{\infty}} \leq C \|v\|_{L^2}^{\frac12}\|v\|_{H^2}^{\frac12}
&&\hspace{-50pt}
\forall\, v \in H_0^1(\Omega)\cap H^2(\Omega) .
\label{Linf-L2-H2}
\end{align}



Second, the following basic properties of finite element spaces and the finite element solution to the NS equations will be used. 
\begin{enumerate}
\item
Approximation of $X_h$ to $\dot H^1_0$: for $m=1,2$ there holds 
\begin{align}
\label{Xh-X-approx}
\inf_{v_h\in X_h} ( \|v-v_h\|_{L^2} + h \|v-v_h\|_{H^1} )
\le
Ch^m \|v\|_{H^m}
\quad\forall\, v\in \dot H^1_0\cap H^m(\Omega)^2. 
\end{align}
Since $\Pi_h v\in X_h$ for $v\in \dot H^1_0$, the inequality above follows from \eqref{Fortin-Error}.  

\item
$H^1$-stability of the $L^2$-orthogonal projection $P_{X_h}:\dot H^1_0\rightarrow X_h$:
\begin{equation}\label{H1-stability-PXh}
\|P_{X_h}v\|_{H^1}\le C\| v\|_{H^1} \mbox{ for all }v\in 
\dot H^1_0.
\end{equation}
{\it Proof.} 
By using the triangle inequality, inverse inequality and \eqref{Xh-X-approx}, we have 
\begin{align*}
\|v-P_{X_h}v\|_{H^1} 
&\le \inf_{v_h\in X_h}(\|v-v_h\|_{H^1}+\|P_{X_h}(v-v_h)\|_{H^1}) \\
&\le \inf_{v_h\in X_h}(\|v-v_h\|_{H^1}+Ch^{-1}\|v-v_h\|_{L^2}) 
\le C\|v\|_{H^1} .
\hspace{55pt}\mbox{\endproof}
\end{align*}

\item 
Error bound of the $L^2$-orthogonal projection $P_{X_h}:\dot H^1_0\rightarrow X_h$:
\begin{equation}\label{Error-PXh}
\|v - P_{X_h}v\|_{L^2} + h\|v - P_{X_h}v\|_{H^1}
\le
Ch^s \|v\|_{H^s} \quad\forall\, v\in \dot H^1_0\cap H^2(\Omega)^2 ,
\,\,\,s=1,2. 
\end{equation}
{\it Proof.} 
By using the triangle inequality, \eqref{Xh-X-approx} and \eqref{H1-stability-PXh}, we have 
\begin{align*}
&\|v - P_{X_h}v\|_{L^2} + h\|v - P_{X_h}v\|_{H^1}\\
&\le \inf_{v_h\in X_h}(\|v-v_h\|_{L^2} + h\|v-v_h\|_{H^1}+\|P_{X_h}(v-v_h)\|_{L^2} + h\|P_{X_h}(v-v_h)\|_{H^1}) \\
&\le \inf_{v_h\in X_h}(\|v-v_h\|_{L^2} + h\|v-v_h\|_{H^1}) 
\le Ch^s\|v\|_{H^s} .
\hspace{120pt}\mbox{\endproof}
\end{align*}

\item
The Stokes--Ritz projection and its error bound: 
Let $R_{X_h}:\dot H^1_0\rightarrow X_h$ be the Stokes--Ritz projection, defined by 
\begin{align}\label{def-Stokes-Ritz}
(\nabla(v-R_{X_h}v),\nabla w_h)=0\quad\forall\, w_h\in X_h,\,\,\,v\in \dot H^1_0 .
\end{align}
which is equivalent to finding $(R_{X_h}v, \eta_h)\in V_h\times Q_h$ such that 
\begin{align*}
\begin{aligned}
(\nabla(v-R_{X_h}v),\nabla w_h)-(\eta_h,\nabla\cdot w_h)&=0
&&\forall\, w_h\in V_h ,\\
(\nabla\cdot R_{X_h}v,q_h)&=0
&&\forall\, q_h\in Q_h .
\end{aligned}
\end{align*}
The Stokes--Ritz projection has the following error bound (cf. \cite[Lemma 2.44 and Lemma 2.45]{Ern-Guermond-2004} and \cite[Proposition 4.18]{Ern-Guermond-2004}):
\begin{align}\label{Error-Stokes-Ritz}
\|v-R_{X_h}v\|_{L^2} + h \|v-R_{X_h}v\|_{H^1} 
\le
Ch^s \|v\|_{H^s} \quad\forall\, v\in \dot H^1_0\cap H^2(\Omega)^2 ,\,\,\, 
s=1,2 .
\end{align}

%

\item
The numerical solution given by the fully discrete FEM in \eqref{fully-FEM-Euler} satisfies the following basic energy estimate: 
\begin{align}\label{FD-basic-energy}
\max_{1\le n\le N}\|u_h^{n}\|_{L^2}^2
+2\sum_{n=1}^N\tau_n \|\nabla u_h^{n}\|_{L^2}^2
\le
\|u_h^0\|_{L^2}^2 .
\end{align}
This can be obtained by testing \eqref{semi-Euler} with $u_h^n$.

\end{enumerate}
\medskip

Third, since inequality \eqref{W14-H1-H2} cannot hold for finite element functions (which do not have second-order partial derivatives), we would need the following discrete analogy of \eqref{W14-H1-H2}. 

\begin{lemma}\label{Lemma:discrete_W14}
The following inequality holds: 
\begin{equation}
\label{Eq:phih2}
\|\nabla\phi_h\|_{L^4}\le C\|\nabla \phi_h\|_{L^2}^{\frac12} \|A_h\phi_h\|_{L^2}^{\frac12} 
\quad\forall\,\phi_h\in X_h . 
\end{equation}
\end{lemma}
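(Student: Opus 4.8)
The plan is to reduce the discrete inequality to its continuous counterpart \eqref{W14-H1-H2} by introducing an auxiliary continuous Stokes problem whose Stokes--Ritz projection is exactly $\phi_h$. Concretely, given $\phi_h\in X_h$, I would set $g:=A_h\phi_h\in X_h\subset\dot L^2$ and let $w\in D(A)=\dot H^1_0\cap H^2(\varOmega)^2$ solve $Aw=g$. Since $\varOmega$ is a convex polygon, the Stokes operator is an isomorphism onto $X$ with $\|w\|_{H^2}\le C\|Aw\|_{L^2}=C\|A_h\phi_h\|_{L^2}$. The key identity is $\phi_h=R_{X_h}w$: for every $w_h\in X_h$ one has $(\nabla w,\nabla w_h)=-(Aw,w_h)=-(g,w_h)=-(A_h\phi_h,w_h)=(\nabla\phi_h,\nabla w_h)$ by \eqref{def-Stokes-A} and \eqref{def-Ah}, so $(\nabla(w-\phi_h),\nabla w_h)=0$, which is precisely \eqref{def-Stokes-Ritz}.

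With this at hand I would split $\|\nabla\phi_h\|_{L^4}\le\|\nabla(w-\phi_h)\|_{L^4}+\|\nabla w\|_{L^4}$ and treat the two terms separately. For the smooth part, the continuous inequality \eqref{W14-H1-H2} gives $\|\nabla w\|_{L^4}\le C\|\nabla w\|_{L^2}^{1/2}\|w\|_{H^2}^{1/2}$. Here $\|\nabla w\|_{L^2}$ is only bounded \emph{below} by $\|\nabla\phi_h\|_{L^2}$, so to recover the target right-hand side I would use the triangle inequality together with the Ritz error bound \eqref{Error-Stokes-Ritz}: $\|\nabla w\|_{L^2}\le\|\nabla\phi_h\|_{L^2}+\|\nabla(w-R_{X_h}w)\|_{L^2}\le\|\nabla\phi_h\|_{L^2}+Ch\|w\|_{H^2}$. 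Combined with $\|w\|_{H^2}\le C\|A_h\phi_h\|_{L^2}$ and subadditivity of the square root, this yields $\|\nabla w\|_{L^4}\le C\|\nabla\phi_h\|_{L^2}^{1/2}\|A_h\phi_h\|_{L^2}^{1/2}+Ch^{1/2}\|A_h\phi_h\|_{L^2}$.

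For the projection-error part I would insert a quasi-interpolant $I_h w\in V_h$ (stable and with optimal approximation in both $H^1$ and $W^{1,4}$) and write $w-\phi_h=(w-I_hw)+(I_hw-\phi_h)$. The continuous piece is controlled by the $W^{1,4}$ interpolation estimate $\|\nabla(w-I_hw)\|_{L^4}\le Ch^{1/2}\|w\|_{H^2}$ (using $H^2\hookrightarrow W^{1,4}$ in two dimensions), while the fully discrete piece $I_hw-\phi_h\in V_h$ is handled by the $L^2$--$L^4$ inverse inequality $\|\cdot\|_{L^4}\le Ch^{-1/2}\|\cdot\|_{L^2}$ followed by the $H^1$ error bounds of the interpolant and of the Ritz projection, giving $\|\nabla(I_hw-\phi_h)\|_{L^4}\le Ch^{-1/2}\cdot Ch\|w\|_{H^2}=Ch^{1/2}\|w\|_{H^2}$. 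Hence $\|\nabla(w-\phi_h)\|_{L^4}\le Ch^{1/2}\|A_h\phi_h\|_{L^2}$ as well.

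Putting the two estimates together leaves the single remainder term $Ch^{1/2}\|A_h\phi_h\|_{L^2}$, which must be absorbed into the target right-hand side; this is the crux of the argument and the main obstacle, since it reflects both the mismatch between $\|\nabla w\|_{L^2}$ and $\|\nabla\phi_h\|_{L^2}$ and the mesh-dependent Ritz error measured in the non-Hilbertian $L^4$ norm. The mechanism is the discrete inverse estimate $\|A_h\phi_h\|_{L^2}\le Ch^{-1}\|\nabla\phi_h\|_{L^2}$, obtained by testing \eqref{def-Ah} with $v_h=A_h\phi_h$ and using $\|\nabla A_h\phi_h\|_{L^2}\le Ch^{-1}\|A_h\phi_h\|_{L^2}$. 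Writing $h^{1/2}\|A_h\phi_h\|_{L^2}=(h\|A_h\phi_h\|_{L^2})^{1/2}\|A_h\phi_h\|_{L^2}^{1/2}\le C\|\nabla\phi_h\|_{L^2}^{1/2}\|A_h\phi_h\|_{L^2}^{1/2}$ then converts the surplus factor back into the desired geometric-mean form and completes the proof.
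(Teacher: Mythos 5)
Your proposal is correct and follows essentially the same route as the paper: both lift $\phi_h$ to the continuous Stokes solution $w$ of $Aw=A_h\phi_h$ (noting $\phi_h=R_{X_h}w$), invoke the $H^2$ regularity estimate $\|w\|_{H^2}\le C\|A_h\phi_h\|_{L^2}$ together with the continuous inequality \eqref{W14-H1-H2}, and control the discrete remainder by inserting an interpolant and using inverse inequalities. The only substantive difference is the closing step: the paper replaces $\|\nabla w\|_{L^2}^{\frac12}$ by $\|\nabla\phi_h\|_{L^2}^{\frac12}$ directly, via testing $Aw=A_h\phi_h$ with $w$ and the $H^1$-stability of $P_{X_h}$, whereas you absorb the surplus term $Ch^{\frac12}\|A_h\phi_h\|_{L^2}$ through the discrete inverse estimate $\|A_h\phi_h\|_{L^2}\le Ch^{-1}\|\nabla\phi_h\|_{L^2}$; both mechanisms are valid under the same shape-regularity/quasi-uniformity assumptions already used elsewhere in the paper.
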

\begin{proof}
To obtain a bound for $\|\nabla\phi_h\|_{L^4}$, we let $\phi\in D(A) = \dot H^1_0\cap H^2(\varOmega)^2$ be the solution of
\begin{equation}
\label{Eq:phi}
A\phi = A_h\phi_h \quad\mbox{(where $A_h\phi_h \in X_h\subset X$)},
\end{equation}
which is equivalent to the linear Stokes equation
$$
\left\{\begin{aligned}
-\Delta \phi + \nabla \eta & = -A_h\phi_h &&\mbox{in}\,\,\,\Omega,\\
\nabla\cdot\phi &=0              &&\mbox{in}\,\,\,\Omega,\\
\phi &=0              &&\mbox{on}\,\,\,\partial\Omega . 
\end{aligned}\right.
$$
By the standard $H^2$ estimate of the linear Stokes equation (cf. \cite[Theorem 2]{Kellogg-Osborn-1976}), the solution $\phi\in D(A)$ satisfies 
\begin{align}\label{H2-phi-Ahphih}
\|\phi\|_{H^2}&\le C\|A_h\phi_h\|_{L^2} .
\end{align}
This inequality and \eqref{W14-H1-H2} imply that 
\begin{equation}\label{H2-phi-Ahphih-2}
\|\nabla\phi\|_{L^4} \le C\|\nabla\phi\|_{L^2}^{\frac12}\|A_h\phi_h\|_{L^2}^{\frac12} . 
\end{equation}

On the one hand, by using the projection operator $\Pi_h$ in \eqref{Fortin-Error}  and the standard interpolation inequality $\|f\|_{L^4}\le \|f\|_{L^2}^{\frac12}\|f\|_{L^\infty}^{\frac12}$, we have 
\begin{align}
\|\nabla(\phi_h-\Pi_h\phi)\|_{L^4}
\le& C\|\nabla(\phi_h-\Pi_h\phi)\|_{L^2}^{\frac12} 
\|\nabla(\phi_h-\Pi_h\phi)\|_{L^\infty}^{\frac12} \notag \\
\le& Ch^{-\frac12}\|\phi\|_{H^1}^{\frac12} 
\|\nabla(\phi_h-\Pi_h\phi)\|_{L^2}^{\frac12}
\quad\mbox{(inverse inequality)} 
\notag\\
\le& C\|\phi\|_{H^1}^{\frac12} \|\phi\|_{H^{2}}^{\frac12} \notag\\
\le& C\|\phi\|_{H^1}^{\frac12}\|A_h\phi_h\|_{L^2}^{\frac12},
\end{align}
where the last inequality uses \eqref{H2-phi-Ahphih}.
On the other hand, 
\begin{align}
\begin{aligned}
\|\nabla \Pi_h\phi\|_{L^4}
&\le C\|\nabla\phi\|_{L^4} 
&&\mbox{(stability of $\Pi_h$ in $W^{1,4}(\Omega)$)} \\
&\le C\|\phi\|_{H^1}^{\frac12}\|A_h\phi_h\|_{L^{2}}^{\frac12} .
&&\mbox{(here we have used \eqref{H2-phi-Ahphih-2})}
\end{aligned}
\end{align}
Combining the two estimates above and using the triangle inequality, we obtain 
\begin{equation}
\label{Eq:phih}
\|\nabla\phi_h\|_{L^4}\le C\|\phi\|_{H^1}^{\frac12}\|A_h\phi_h\|_{L^2}^{\frac12} . 
\end{equation}
It remains to prove the following inequality: 
\begin{align}
&\label{Eq:phih-22}
\|\phi\|_{H^1}\le C\|\phi_h\|_{H^1} .
\end{align}
Then substituting \eqref{Eq:phih-22} into \eqref{Eq:phih} yields the desired result \eqref{Eq:phih2}. 

In fact, testing equation \eqref{Eq:phi} by $\phi$ gives
\begin{align*}
\|\nabla\phi\|_{L^2}^2 = -(A_h\phi_h,\phi) = (\nabla\phi_h,\nabla P_{X_h}\phi)
\le C\|\nabla\phi_h\|_{L^2}\|P_{X_h}\phi\|_{H^1} 
\le C\|\nabla\phi_h\|_{L^2}\|\phi\|_{H^1} .
\end{align*}
where the last inequality uses the $H^1$ stability of the $L^2$ projection $P_{X_h}$, as shown in \eqref{H1-stability-PXh}. Since $\phi\in H^1_0(\Omega)^2$, it follows that $\|\phi\|_{H^1}\le C\|\nabla\phi\|_{L^2}$. Hence, the inequality above furthermore implies \eqref{Eq:phih-22}. 
This completes the proof of Lemma \ref{Lemma:discrete_W14}. 
\hfill\end{proof}

It is well known that the unique weak solution of the NS equations \eqref{pde} satisfies the energy equality
\begin{align}
\label{Eq:u_energy_eq-L2}
&\frac{1}{2}\|u(t)\|_{L^2}^2 + \|\nabla u\|_{L^2(0,t;L^2)}^2 = \frac{1}{2} \|u^0\|_{L^2}^2,\quad\forall\, t > 0 . 
\end{align}
This can be obtained by testing \eqref{pde} with $u$. In addition to this basic estimate, 
the following regularity result for the NS equations will be used in the error analysis.

\begin{lemma}\label{lemma:regularity-u}
For any given $u^0\in \dot L^2$, the solution of \eqref{pde} satisfies the following estimate:
\begin{align}
\label{Eq:ut_energy_eq}
&\|\partial_t^m u(t)\|_{L^2}  + t^{\frac12}\|\partial_t^m u(t)\|_{H^1} 
+ t\|\partial_t^m u(t)\|_{H^2} \le C_m t^{-m} \quad\forall\, t>0,\,\,\, m=0,1,2\dots
\end{align}
where the constant $C_m$ depends on $m$ and $\|u^0\|_{L^2}$. 
\end{lemma}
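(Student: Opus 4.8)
The plan is to establish the temporally weighted regularity estimate $\|\partial_t^m u(t)\|_{L^2} + t^{1/2}\|\partial_t^m u(t)\|_{H^1} + t\|\partial_t^m u(t)\|_{H^2} \le C_m t^{-m}$ by an induction on $m$ combined with an energy bootstrap that trades one order of temporal decay against two orders of spatial smoothing. The starting point is the energy equality \eqref{Eq:u_energy_eq-L2}, which already gives $\|u(t)\|_{L^2}\le\|u^0\|_{L^2}$ and $\|\nabla u\|_{L^2(0,t;L^2)}^2 \le \tfrac12\|u^0\|_{L^2}^2$, covering the $m=0$ case at the $L^2$ level. The key analytic tool throughout will be the smoothing property of the Stokes semigroup $e^{tA}$: for the linear part one has $\|A^{s/2}e^{tA}v\|_{L^2}\le C t^{-s/2}\|v\|_{L^2}$, and the NS solution can be written via the Duhamel formula $u(t) = e^{tA}u^0 - \int_0^t e^{(t-s)A}P_X(u\cdot\nabla u)\,ds$, so the weighted bounds will come from carefully estimating this convolution against the nonlinearity, using the interpolation inequalities \eqref{L4-L2-H1}--\eqref{Linf-L2-H2} to control $P_X(u\cdot\nabla u)$ in negative-order spaces.

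The induction would proceed in stages for each fixed $m$. First I would upgrade the spatial regularity at the base temporal order: starting from the $L^2$ and $L^2(0,t;H^1)$ control, I would test the abstract equation \eqref{pde_abstract} against $-Au$ (equivalently $\partial_t u$) to obtain, after absorbing the nonlinear term $(P_X(u\cdot\nabla u), Au)$ via \eqref{W14-H1-H2} and Young's inequality, a differential inequality of the form $\tfrac{d}{dt}\|\nabla u\|_{L^2}^2 + \|Au\|_{L^2}^2 \le C\|\nabla u\|_{L^2}^2(1 + \|\nabla u\|_{L^2}^2)$. Multiplying by the weight $t$ and using the already-known integrability of $\|\nabla u\|_{L^2}^2$ in time yields $t^{1/2}\|\nabla u(t)\|_{L^2}\le C$ and, via the $H^2$ Stokes estimate $\|u\|_{H^2}\le C\|Au\|_{L^2}$, the weight $t\|u(t)\|_{H^2}\le C$. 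This handles the full $m=0$ statement. For the inductive step, I would formally differentiate the equation in time $m$ times, obtaining an equation for $w := \partial_t^m u$ of the form $\partial_t w - Aw + P_X(\text{commutator of lower-order time derivatives}) = 0$, and repeat the same energy argument on $w$, now carrying the weight $t^{2m}$ (respectively $t^{2m+1}$, $t^{2m+2}$) so as to produce the factor $t^{-m}$. The nonlinear commutator terms $\sum_{j} \binom{m}{j} P_X(\partial_t^j u \cdot \nabla \partial_t^{m-j} u)$ are controlled by the inductive hypothesis on all lower-order derivatives together with the interpolation inequalities.

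The main obstacle I expect is \textbf{the rigorous treatment near $t=0$}, where every weighted quantity is genuinely singular and the formal time-differentiation is not a priori justified for merely $\dot L^2$ initial data. Rather than differentiating a solution that is not yet known to be smooth, the clean route is to prove the estimates on a sequence of smooth approximations (e.g.\ Galerkin truncations or mollified initial data $u^0_\varepsilon \in D(A^k)$) with constants $C_m$ that depend only on $\|u^0\|_{L^2}$ and not on the smoothing parameter, and then pass to the limit; this requires that every absorption step rely only on $\|u^0\|_{L^2}$ and the already-established lower-weight bounds, never on higher norms of the initial data. A secondary difficulty is the bookkeeping of the nonlinear terms: closing each differential inequality demands that the nonlinearity be estimated in a norm strictly weaker than $\|A w\|_{L^2}$ so that it can be absorbed into the left-hand side, and in two dimensions this is exactly where the critical scaling makes the interpolation exponents tight, so the Gronwall constants must be tracked to confirm they remain finite and depend only on $\|u^0\|_{L^2}$, $\Omega$, and $m$.
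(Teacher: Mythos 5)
Your plan is essentially the paper's proof: regularize the initial data so that the formal time differentiations are justified, run temporally weighted energy estimates by induction on $m$ (testing the $m$-times-differentiated equation by $\partial_t^m u$ and by $\partial_t^{m+1}u$, combined with the $H^2$ Stokes resolvent estimate to convert $\|Aw\|_{L^2}$ control into $\|w\|_{H^2}$ control and to absorb the commutator nonlinearities via \eqref{L4-L2-H1}--\eqref{W14-H1-H2}), and pass to the limit with constants depending only on $\|u^0\|_{L^2}$. The only cosmetic difference is that the paper implements the temporal weight through a smooth cut-off $\chi$ vanishing on $(0,s/4)$ rather than a power of $t$, and it never actually needs the Duhamel/semigroup smoothing you advertise as the ``key analytic tool'' --- the energy method you then outline closes on its own.
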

%


Since we have not found a proof of Lemma \ref{lemma:regularity-u} in the literature, we present a proof of this lemma in Appendix. 


\subsection{Estimates for the consistency errors}
We denote $\hat u_h^n= P_{X_h}u(t_n)$. By testing equation \eqref{pde_abstract} with $v_h\in X_h\subset X=\dot L^2$ and using the Stokes--Ritz projection operator defined in \eqref{def-Stokes-Ritz}, we have 
\begin{equation*}
(\partial_tP_{X_h}u(t),v_h)
+(u(t)\cdot\nabla u(t),v_h) + (\nabla R_{X_h} u(t),\nabla v_h) 
= 0 \quad\forall\, v_h\in X_h . 
\end{equation*}
which can be written into the abstract form  
\begin{equation*}
\partial_t P_{X_h}u(t)
+ P_{X_h}(u(t)\cdot\nabla u(t)) - A_h R_{X_h} u(t) 
= 0 . 
\end{equation*}
By considering this equation at $t=t_n$, we can write down the equation satisfied by $\hat u_h^n$, i.e., 
\begin{align}\label{FEM-int}
\begin{aligned}
\frac{\hat u_h^n - \hat u_h^{n-1} }{\tau_n}
+P_{X_h}(\hat u_h^{n-1}\cdot\nabla \hat u_h^n) 
- A_h \hat u_h^n = \mathcal{E}^n + \mathcal{F}_h^n
\quad\mbox{for}\,\,\, n\ge 1 , 
\end{aligned}
\end{align}
where the truncation errors $\mathcal{E}^n$ and $\mathcal{F}_h^n$ are given by
\begin{align}\label{eps-n-be}
\mathcal{E}^n 
=& 
\bigg(\frac{\hat u_h^n - \hat u_h^{n-1} }{\tau_n}-\partial_t \hat u_h(t_n)\bigg) 
+
P_{X_h}[(u(t_{n-1})-u(t_{n}))\cdot\nabla u(t_n)]  
=: \mathcal{E}_{1}^n + \mathcal{E}_{2}^n,\\
\label{F-h-n-be}
\mathcal{F}_h^n
=&
 - A_h(\hat u_h^n - R_{X_h} u(t_n))
+ P_{X_h}[ \hat u_h^{n-1} \cdot \nabla (\hat u_h^n-u(t_n))] \\
& 
+ P_{X_h}[(\hat u_h^{n-1} - u(t_{n-1})) \cdot \nabla u(t_n)] \notag\\
=&\!\!: \mathcal{F}_{h, 1}^n + \mathcal{F}_{h, 2}^n + \mathcal{F}_{h, 3}^n.
\notag
\end{align}
\begin{lemma}[Consistency errors]\label{LM:Consistency-Euler}
If $u^0\in \dot L^2$ and the stepsize in \eqref{stepsize} is used, then for all test functions $v_h\in X_h$, the consistency errors defined in \eqref{eps-n-be}--\eqref{F-h-n-be} satisfy the following estimates: 
\begin{align}\label{con_err-E}
&|(\mathcal{E}^n,v_h)| \le C\tau_n t_n^{-\frac 32}\|\nabla v_h\|_{L^2} 
\hspace{17.3pt}\mbox{for}\,\,\,\, n \ge 1, \\[5pt] 
\label{con_err-F}
&|(\mathcal{F}_h^n,v_h)|\le
\left\{
\begin{aligned}
&Cht_n^{-1}\|\nabla v_h\|_{L^2} &&\mbox{for}\,\,\,\,  n \ge 2, \\
&C(ht_n^{-1}+\tau_1 t_1^{-\frac 32})\|\nabla v_h\|_{L^2} &&\mbox{for}\,\,\,\,  n = 1. 
\end{aligned}
\right.
\end{align}
\end{lemma}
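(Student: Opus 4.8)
The plan is to pair each consistency term with a test function $v_h\in X_h$ and exploit two algebraic facts throughout: $(P_{X_h}w,v_h)=(w,v_h)$ (so every $P_{X_h}$ disappears against $v_h$, and $\partial_t$ commutes with $P_{X_h}$), and the skew-symmetry $(w\cdot\nabla z,v_h)=-(w\cdot\nabla v_h,z)$ valid whenever $\nabla\cdot w=0$. Every resulting factor is then measured in the weighted norms supplied by Lemma \ref{lemma:regularity-u}. For the purely temporal error $\mathcal{E}^n=\mathcal{E}_1^n+\mathcal{E}_2^n$, I would treat $\mathcal{E}_1^n$ by writing the difference quotient as an integral (Taylor) remainder, so that $\mathcal{E}_1^n$ equals, up to sign, $\tau_n^{-1}\int_{t_{n-1}}^{t_n}\!\int_t^{t_n}\partial_t^2\hat u_h(s)\,ds\,dt$, and estimating $(\partial_t^2 u(s),v_h)$ by differentiating the abstract equation \eqref{pde_abstract} once in time: the pairing becomes $-(\nabla\partial_t u,\nabla v_h)-(\partial_t u\cdot\nabla u+u\cdot\nabla\partial_t u,v_h)$, which after skew-symmetry and \eqref{L4-L2-H1} is bounded by $Cs^{-3/2}\|\nabla v_h\|_{L^2}$, i.e.\ a $\dot H^{-1}$ bound $\|\partial_t^2u(s)\|_{\dot H^{-1}}\le Cs^{-3/2}$. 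Integrating over the cell, on which $s\sim t_n$ for $n\ge2$, yields $C\tau_n t_n^{-3/2}$. For $\mathcal{E}_2^n$ I would use $u(t_{n-1})-u(t_n)=-\int_{t_{n-1}}^{t_n}\partial_t u$, the skew-symmetric form, and $\|u(t_n)\|_{L^4}\le Ct_n^{-1/4}$, again collecting $C\tau_n t_n^{-3/2}$.

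The case $n=1$ is genuinely different, because $t_0=0$ and $u^0$ is only in $\dot L^2$, so integrals such as $\int_0^{t_1}s^{-1}\,ds$ diverge and the second-order remainder argument fails. Here I would bound $\mathcal{E}_1^1$ by the cruder first-order difference and the estimate $\|\partial_t u(s)\|_{\dot H^{-1}}\le Cs^{-1/2}$ read off from the undifferentiated equation, whose integral over $(0,t_1]$ is finite and, using $t_1\sim\tau_1$, matches $C\tau_1 t_1^{-3/2}$. For $\mathcal{E}_2^1$ I would \emph{not} difference $u^0$ in $L^4$ (it is not in $L^4$); instead I would split $(u^0-u(t_1))\cdot\nabla u(t_1)$ and control the two pieces by $\|u^0\|_{L^2}\|u(t_1)\|_{L^\infty}$ and $\|u(t_1)\|_{L^4}^2$ respectively, invoking $\|u(t_1)\|_{L^\infty}\le Ct_1^{-1/2}$ from \eqref{Linf-L2-H2} and Lemma \ref{lemma:regularity-u}.

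For the spatial error $\mathcal{F}_h^n$, the term $\mathcal{F}_{h,1}^n$ is the easiest: $-(A_h(\hat u_h^n-R_{X_h}u(t_n)),v_h)=(\nabla(P_{X_h}-R_{X_h})u(t_n),\nabla v_h)$, which the $H^1$ estimates \eqref{Error-PXh} and \eqref{Error-Stokes-Ritz} together with $\|u(t_n)\|_{H^2}\le Ct_n^{-1}$ bound by $Cht_n^{-1}\|\nabla v_h\|_{L^2}$. The nonlinear terms $\mathcal{F}_{h,2}^n$ and $\mathcal{F}_{h,3}^n$ are the crux and the place I expect the main obstacle. Writing $e_P^k=\hat u_h^k-u(t_k)=-(I-P_{X_h})u(t_k)$, skew-symmetry and the interpolated projection bound $\|e_P^k\|_{L^4}\le Ch^{3/2}t_k^{-1}$ (from \eqref{Error-PXh}) alone give only $Ch^{3/2}t_n^{-5/4}$, which beats $ht_n^{-1}$ precisely when $t_n\ge h^2$. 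The key device is therefore a comparison at the threshold $t_n\sim h^2$: when $t_n\ge h^2$ I use the full $H^2$-regularity projection rates, while when $t_n\le h^2$ I discard the rate and bound $\|e_P^k\|_{L^4}\le\|u(t_k)\|_{L^4}+\|P_{X_h}u(t_k)\|_{L^4}\le Ct_k^{-1/4}$; the two estimates agree at the crossover and each is $\le Cht_n^{-1}\|\nabla v_h\|_{L^2}$, so the bound holds for all $t_n$ with no grid-ratio condition. The analogous dichotomy handles $\mathcal{F}_{h,2}^n$, where the coefficient $\hat u_h^{n-1}$ is taken in $L^4$ and, for $n=1$ and $t_1\le h^2$, via an inverse estimate $\|\hat u_h^0\|_{L^4}\le Ch^{-1/2}\|u^0\|_{L^2}$.

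Finally, the exceptional summand in \eqref{con_err-F} at $n=1$ comes from $\mathcal{F}_{h,3}^1$, where $e_P^0=(P_{X_h}-I)u^0$ enjoys no convergence rate (since $u^0\in\dot L^2$ only) and must be bounded by $\|e_P^0\|_{L^2}\le\|u^0\|_{L^2}$; pairing against $\|u(t_1)\|_{L^\infty}\le Ct_1^{-1/2}$ and using $t_1\sim\tau_1$ produces exactly the extra $C\tau_1 t_1^{-3/2}\|\nabla v_h\|_{L^2}$. Overall the hard part is the pair $\mathcal{F}_{h,2},\mathcal{F}_{h,3}$: one must choose the trilinear splitting so that $v_h$ carries only $\|\nabla v_h\|_{L^2}$ while the temporal weights from Lemma \ref{lemma:regularity-u} combine to the sharp power $t_n^{-1}$, and organise the $t_n\sim h^2$ dichotomy so that the full-regularity and minimal-regularity estimates match without imposing any relation between $h$ and $\tau$.
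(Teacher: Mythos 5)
Your proposal is correct, and its overall architecture coincides with the paper's: the same splittings $\mathcal{E}^n=\mathcal{E}_1^n+\mathcal{E}_2^n$ and $\mathcal{F}_h^n=\mathcal{F}_{h,1}^n+\mathcal{F}_{h,2}^n+\mathcal{F}_{h,3}^n$, the same use of skew-symmetry to unload derivatives onto $v_h$, the weighted bounds of Lemma \ref{lemma:regularity-u}, the Taylor remainder with $\|\partial_t^2u(s)\|_{\dot H^{-1}}\le Cs^{-3/2}$ for $n\ge 2$, the first-order fallback with $\|\partial_tu(s)\|_{\dot H^{-1}}\le Cs^{-1/2}$ at $n=1$, and the rateless $L^2$ bound on $(I-P_{X_h})u^0$ paired with $\|u(t_1)\|_{L^\infty}\le Ct_1^{-1/2}$ to produce the extra $\tau_1 t_1^{-3/2}$ term. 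The one place where you genuinely diverge is the step you yourself flag as the crux, namely $\mathcal{F}_{h,2}^n$ and $\mathcal{F}_{h,3}^n$. The paper takes the H\"older exponents as $L^4\times L^2\times L^4$ with $\nabla v_h$ in $L^4$, and then applies the inverse inequality $\|\nabla v_h\|_{L^4}\le Ch^{-1/2}\|\nabla v_h\|_{L^2}$; the factor $h^{-1/2}$ exactly cancels the surplus $h^{1/2}$ in $\|(I-P_{X_h})u(t_k)\|_{L^4}\le Ch^{3/2}\|u(t_k)\|_{H^2}$, yielding $Cht_n^{-1}\|\nabla v_h\|_{L^2}$ in a single estimate valid for all $n\ge1$ with no case distinction. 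You instead keep $\nabla v_h$ in $L^2$ and run a dichotomy at the threshold $t_n\sim h^2$, using the full projection rate when $t_n\ge h^2$ and the rateless interpolated bound $\|(I-P_{X_h})u(t_k)\|_{L^4}\le Ct_k^{-1/4}$ when $t_n\le h^2$; your crossover arithmetic is correct and the two branches indeed meet at $Cht_n^{-1}$. Both routes work and require comparable mesh hypotheses (you still need an inverse estimate for $\|P_{X_h}u^0\|_{L^4}$ at $n=1$, and the paper's inverse inequality on $v_h$ likewise presumes quasi-uniformity); the paper's version is shorter because the test function absorbs the deficit uniformly in $t_n$, whereas yours makes explicit where the $H^2$-rate argument actually pays off, at the cost of a two-case analysis.
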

%
\begin{proof}
Testing \eqref{pde_abstract} by $v_h\in X_h$ and integrating the result over the time interval $(t_{n-1},t_n)$, we obtain 
\begin{align}\label{utn-utn-1}
(u(t_n) - u(t_{n-1}), v_h) = 
-\int_{t_{n-1}}^{t_n} (\nabla u(t), \nabla v_h) \, \d t 
- \int_{t_{n-1}}^{t_n} (u(t) \cdot \nabla u(t), v_h)\, \d t .
\end{align}
For $n=1$, the truncation errors can be estimated by using \eqref{utn-utn-1} and \eqref{pde_abstract}, and the triangle inequality: 
\begin{align*}
|(\mathcal{E}_{1}^1, v_h)| \le& 
\tau_1^{-1} |(u(t_1) - u(t_0), v_h)| + |(\partial_{t} u(t_1), v_h)|\\
=& \tau_1^{-1} \Bigl|\int_0^{t_1} (\nabla u(t), \nabla v_h)\, \d t - \int_0^{t_1} (u(t),u(t) \cdot \nabla v_h)\,\d t \Bigr| \\
&+ |(\nabla u(t_1), \nabla v_h) - (u(t_1) ,u(t_1)\cdot \nabla v_h)| \\
\le& C\tau_1^{-1} \int_0^{t_1} \bigl[\|u(t)\|_{H^1}\|\nabla v_h\|_{L^2}  + \|u(t)\|_{L^4}^2\|\nabla v_h\|_{L^2} \bigr] \, \d t\\
&+ C(\|u(t_1)\|_{H^1}+\|u(t_1)\|_{L^4}^2)\|\nabla v_h\|_{L^2}\\
\le& C\tau_1^{-1} \int_0^{t_1} \bigl[\|u(t)\|_{H^1}\|\nabla v_h\|_{L^2}  + \|u(t)\|_{L^2}\|u(t)\|_{H^1}\|\nabla v_h\|_{L^2} \bigr] \, \d t\\
&+ C(\|u(t_1)\|_{H^1}+\|u(t_1)\|_{L^2}\|u(t_1)\|_{H^1})\|\nabla v_h\|_{L^2}
\quad\mbox{ (here \eqref{L4-L2-H1} is used)}\\ 
\le& C\tau_1^{-\frac12} \|u\|_{L^2(0, t_1; H^1)}(1+\|u\|_{L^\infty(0, t_1; L^2)})\|\nabla v_h\|_{L^2}
+ C\tau_1^{-\frac12}\|\nabla v_h\|_{L^2}\\
&\hspace{214pt}\mbox{(here Lemma \ref{lemma:regularity-u} is used)} \\ 
\le&C\tau_1^{-\frac12}\|\nabla v_h\|_{L^2}
=C\tau_1t_1^{-\frac32} \|\nabla v_h\|_{L^2},
\end{align*}
and
\begin{align*}
|(\mathcal{E}_{2}^1, v_h)|
=&  |(u(t_1),(u(t_{0})-u(t_{1})) \cdot \nabla v_h)| \\
\le& C\|u(t_1)\|_{L^\infty} \|u(t_1)-u(t_0)\|_{L^2}\|\nabla v_h\|_{L^2} \\
\le&
C\|u(t_1)\|_{L^2}^{\frac12} \|u(t_1)\|_{H^2}^{\frac12} 
(\|u(t_0)\|_{L^2}+ \|u(t_1)\|_{L^2})
\|\nabla v_h\|_{L^2}  \quad\mbox{ (here \eqref{Linf-L2-H2} is used)}\\ 
\le& C\tau_1^{-\frac12} \|\nabla v_h\|_{L^2} \quad\mbox{(here Lemma \ref{lemma:regularity-u} is used)}\\ 
=& C\tau_1 t_1^{-\frac32} \|\nabla v_h\|_{L^2} . 
\end{align*}
By combining the two estimates above, we obtain 
$|(\mathcal{E}^n, v_h)| \le C\tau_n t_n^{-\frac32} \|\nabla v_h\|_{L^2}$ for $n=1$. 

In the case $n\ge 2$, by differentiating equation \eqref{pde_abstract} in time and testing the result by $v_h\in X_h$, we obtain
\begin{align*}
(\partial_{tt}u(t), v_h) + (\partial_{t} \nabla u(t), \nabla v_h) + (\partial_{t} u(t) \cdot \nabla u(t), v_h) + (u(t) \cdot \nabla \partial_{t} u(t), v_h)= 0 \quad \forall\, v_h \in X_h,
\end{align*}
which implies that
\begin{align*}
|(\mathcal{E}_{1}^n, v_h)|
=
&\bigg|\bigg(\frac{u(t_n) - u(t_{n-1}) }{\tau_n}-\partial_t u(t_n),v_h\bigg)\bigg| 
= \bigg|\bigg(\int_{t_{n-1}}^{t_n} \frac{t-t_{n-1}}{\tau_n} \partial_{tt}u(t)\d t , v_h\bigg) \bigg|\\
=& \bigg|\int_{t_{n-1}}^{t_n} \frac{t-t_{n-1}}{\tau_n} (\partial_{tt}u(t), v_h)\d t  \bigg|\\
\le&
C\tau_n 
\max_{t\in[t_{n-1},t_{n}]}|(\partial_{tt}u(t), v_h)|\\
=&
C\tau_n 
\max_{t\in[t_{n-1},t_{n}]} |(\partial_{t} \nabla u(t), \nabla v_h) - (u(t), \partial_{t} u(t) \cdot \nabla v_h) - ( \partial_{t} u(t), u(t) \cdot \nabla v_h)| \\
\le& C\tau_n 
\max_{t\in[t_{n-1},t_{n}]} \bigl[\|\partial_tu(t)\|_{H^1}
+ \|u_h(t)\|_{L^4}\|\partial_tu(t)\|_{L^4} \bigr]\|\nabla v_h\|_{L^2}  \\
\le& C\tau_n 
\max_{t\in[t_{n-1},t_{n}]} \bigl[ \|\partial_tu(t)\|_{H^1}
+  \|u(t)\|_{L^2}^\frac12 \|u(t)\|_{H^1}^\frac12 \|\partial_tu(t)\|_{L^2}^\frac12 \|\partial_tu(t)\|_{H^1}^\frac12  \bigr]\|\nabla v_h\|_{L^2}  \\
\le&C\tau_n t_{n-1}^{-\frac32} \|\nabla v_h\|_{L^2}
\quad\mbox{(here Lemma \ref{lemma:regularity-u} is used)}  , 
\end{align*}
and
\begin{align*}
|(\mathcal{E}_{2}^n, v_h)|
&=| ( u(t_n) , (u(t_{n-1})-u(t_{n}))\cdot\nabla v_h )| \\\
&\le C\|u(t_n)\|_{L^4} \|u(t_{n-1})-u(t_{n})\|_{L^4} \|\nabla v_h\|_{L^2} \\
&\le C\tau_n \max_{t\in[t_{n-1},t_{n}]} \bigl[\|u(t)\|_{L^4}  \|\partial_tu(t)\|_{L^4}\bigr]\|\nabla v_h\|_{L^2} \\
&\le C\tau_n \max_{t\in[t_{n-1},t_{n}]} \bigl[ \|u(t)\|_{L^2}^\frac12 \|u(t)\|_{H^1}^\frac12 \|\partial_tu(t)\|_{L^2}^\frac12 \|\partial_tu(t)\|_{H^1}^\frac12 \bigr] \|\nabla v_h\|_{L^2} \\
&\le
C\tau_n t_{n-1}^{-\frac32} \|\nabla v_h\|_{L^2} \quad\mbox{(here Lemma \ref{lemma:regularity-u} is used)}   .
\end{align*}
Since $t_{n-1}^{-\frac32} \sim t_n^{-\frac32}$ for $n \ge 2$, the two estimates above imply $|(\mathcal{E}^n, v_h)|\le C\tau_n t_n^{-\frac32} \|\nabla v_h\|_{L^2}$ for $n\ge 2$. 
This completes the proof of \eqref{con_err-E}. 

To prove \eqref{con_err-F}, we consider the expressions of $\mathcal{F}_{h,j}^n$, $j=1,2,3,$ defined in \eqref{F-h-n-be}. By using the approximation properties of the projection operators $P_{X_h}$ and $R_{X_h}$ in \eqref{Error-PXh} and \eqref{Error-Stokes-Ritz}, we have 
\begin{align*}
|(\mathcal{F}_{h, 1}^n, v_h)|
=&|( \nabla (P_{X_h}u(t_n) - R_{X_h} u(t_n)) , \nabla v_h )|\\
\le&\bigl( \|\nabla (P_{X_h} u(t_n)-  u(t_n))\|_{L^2} + \|\nabla (u(t_n) - R_{X_h} u(t_n))\|_{L^2} \bigr)  \|\nabla v_h\|_{L^2}\\
\le&Ch \|u(t_n)\|_{H^2}\|\nabla v_h\|_{L^2}\\
\le& Ch t_n^{-1}\|\nabla v_h\|_{L^2}
\quad  \mbox{for}\,\,\, v_h\in X_h \,\,\,\mbox{and} \,\,\, n \ge 1 ,
\end{align*}
where Lemma \ref{lemma:regularity-u} in deriving the last inequality. 

By applying the inverse inequality $\|v_h\|_{W^{1, 4}} \le Ch^{-\frac12} \|v_h\|_{W^{1, 2}}$ and \eqref{L4-L2-H1}, we obtain
\begin{align*}
|(\mathcal{F}_{h, 2}^n, v_h)| 
=& |(P_{X_h}u(t_{n-1})\cdot \nabla(P_{X_h}u(t_n)-u(t_n)),v_h)|  \\
=& |(P_{X_h}u(t_n)-u(t_n),P_{X_h}u(t_{n-1}) \cdot \nabla v_h )|  \\
\le&C\|P_{X_h}u(t_{n})-u(t_n)\|_{L^4} \|P_{X_h}u(t_{n-1})\|_{L^2} \|\nabla v_h\|_{L^4}\\
\le&C\|P_{X_h}u(t_{n})-u(t_n)\|_{L^2}^{\frac 12} \|P_{X_h}u(t_{n})- u(t_n)\|_{H^1}^{\frac 12}
\|u(t_{n-1})\|_{L^2} h^{-\frac12} \|\nabla v_h\|_{L^2}\\
\le&C\big( h^2\|u(t_n)\|_{H^2} \big)^{\frac12} 
\big( h\|u(t_n)\|_{H^2} \big)^{\frac12} \|u(t_{n-1})\|_{L^2} h^{-\frac12} \|\nabla v_h\|_{L^2}\\
\le& Ch\|u(t_n)\|_{H^2} \|u(t_{n-1})\|_{L^2} \|\nabla v_h\|_{L^2} \\
\le& Ch t_n^{-1} \|\nabla v_h\|_{L^2} 
\quad \mbox{for}\,\,\, v_h\in X_h \,\,\,\mbox{and} \,\,\, n \ge 1 ,
\end{align*} 
where Lemma \ref{lemma:regularity-u} is used in deriving the last inequality. 

Similarly as the estimate for $|(\mathcal{F}_{h, 2}^n, v_h)| $, we have 
\begin{align*}
|(\mathcal{F}_{h, 3}^n, v_h)|
=&|((P_{X_h} u(t_{n-1}) - u(t_{n-1})) \cdot \nabla u(t_n),v_h)| \\
=&|(u(t_n),(P_{X_h} u(t_{n-1}) - u(t_{n-1})) \cdot \nabla v_h)| \\
\le&C\|u(t_n)\|_{L^2} \|P_{X_h} u(t_{n-1}) - u(t_{n-1})\|_{L^4}\|\nabla v_h\|_{L^4}\\
\le& C\|u(t_n)\|_{L^2} \|P_{X_h} u(t_{n-1}) - u(t_{n-1})\|_{L^2}^{\frac12}
\|P_{X_h} u(t_{n-1}) - u(t_{n-1})\|_{H^1}^{\frac12} h^{-\frac 12}
\|\nabla v_h\|_{L^2}\\
\le& C\|u(t_n)\|_{L^2} (h^2\|u(t_{n-1})\|_{H^2})^{\frac12}
(h\|u(t_{n-1})\|_{H^2})^{\frac12} 
h^{-\frac 12} \|\nabla v_h\|_{L^2}\\
\le& Ch \|u(t_n)\|_{L^2} \|u(t_{n-1})\|_{H^2}\|\nabla v_h\|_{L^2}\\
\le& Ch t_n^{-1}\|\nabla v_h\|_{L^2}
\quad \mbox{for}\,\,\, v_h\in X_h \,\,\,\mbox{and} \,\,\, n \ge 2,
\end{align*}
where Lemma \ref{lemma:regularity-u} and $t_{n-1} \sim t_n$ are used for $n \ge 2$. 
For $n=1$ there holds 
\begin{align*}
|(\mathcal{F}_{h,3}^n, v_h)|
=& |( u(t_1),(P_{X_h}u^0 - u^0) \cdot \nabla v_h )|\\
\le&
C\|u(t_1)\|_{L^\infty} \|P_{X_h}u^0 - u^0\|_{L^2}\|\nabla v_h\|_{L^2}\\
\le& 
C\|u(t_1)\|_{L^2}^{\frac12} \|u(t_1)\|_{H^2}^{\frac12}
\|u^0\|_{L^2} 
\|\nabla v_h\|_{L^2}\\
\le& C t_1^{-\frac 12}\|\nabla v_h\|_{L^2} \quad\mbox{(here Lemma \ref{lemma:regularity-u} is used)} \\
\le& C \tau_1 t_1^{-\frac 32}\|\nabla v_h\|_{L^2} \quad\mbox{for}\,\,\,v_h\in X_h\,\,\,\mbox{and}\,\,\, n=1.
\end{align*}
Collecting the above estimates of $\mathcal{F}_{h,j}^n$, $j=1,2,3$, for $n\ge 2$ and $n=1$, we obtain \eqref{con_err-F}. 
%
%
\hfill\end{proof}
    %

%
%

\subsection{Error estimates in a sufficiently small time interval $[0,T_*]$} \label{section:duality}
$\,$

By subtracting \eqref{semi-Euler} from \eqref{FEM-int}, we obtain the following equation for the error function $e_h^n =\hat u_h^n - u_h^n$: 
\begin{align}\label{Error_Eq_FD}
\frac{e_h^n - e_h^{n-1}}{\tau_n}
- A_h e_h^n 
+ P_{X_h}(\hat u_h^{n-1}\cdot\nabla \hat u_h^n-u_h^{n-1}\cdot\nabla u_h^n)
= \mathcal{E}^n +  \mathcal{F}_h^n. 
\end{align}

We first estimate $\sum_{n=1}^m \tau_n \|e_h^n\|_{L^2}^2$ by a duality argument. To this end, we denote by $\phi_h^n\in X_h$ the solution of the backward problem 
\begin{align}\label{dual-eq-phih}
\left\{
\begin{aligned}
-\frac{\phi_h^n - \phi_h^{n-1}}{\tau_{n-1}} - A_h \phi_h^{n-1} 
&= e_h^{n-1} &&\mbox{for}\,\,\, n= 2,\dots,m+1, \\ 
\phi_h^{m+1}&=0  ,
\end{aligned}
\right.
\end{align}
which satisfies the following standard energy estimate: 
\begin{align}\label{phih-sta}
\max_{1\le n\le m}
\|\phi_h^n\|_{H^1}^2
+ \sum_{n=1}^m \tau_n \|A_h\phi_h^n\|_{L^2}^2
\le
C\sum_{n=1}^m \tau_n \|e_h^n\|_{L^2}^2.
\end{align}
This estimate can be obtained from testing \eqref{dual-eq-phih} by $-A_h\phi_h^{n-1}$ and summing up the results for $n=2,\dots,m+1$. 

Testing \eqref{dual-eq-phih} by $\tau_{n-1} e_h^{n-1}$ and summing up the results for $n=2,\dots,m+1$, and using discrete integration by parts in time (with $e_h^0=\phi_h^{m+1}=0$), we obtain 
\begin{align*}
\sum_{n=2}^{m+1} \tau_{n-1} \|e_h^{n-1}\|_{L^2}^2 
=& \sum_{n=2}^{m+1} \tau_{n-1} \Big ( e_h^{n-1}, - \frac{\phi_h^n - \phi_h^{n-1}}{\tau_{n-1}} - A_h \phi_h^{n-1} \Big ) \\
=& \sum_{n=1}^{m} ( e_h^n, \phi_h^n)-  \sum_{n=1}^m (e_h^{n-1}, \phi_h^n)  
+  \sum_{n=1}^{m} \tau_n ( \nabla e_h^n, \nabla \phi_h^n)  \\
=&  \sum_{n=1}^m \tau_n  
\Big(\frac{e_h^n - e_h^{n-1}}{\tau_n} - A_h e_h^n , \phi_h^n \Big )
\end{align*}
By substituting \eqref{Error_Eq_FD} into the inequality above, we obtain
\begin{align}\label{L2L2-eh-n-1}
&\sum_{n=2}^{m+1} \tau_{n-1} \|e_h^{n-1}\|_{L^2}^2 \notag \\
&= - \sum_{n=1}^m \tau_n  (\hat u_h^{n-1}\cdot\nabla \hat u_h^{n} - u_h^{n-1}\cdot\nabla u_h^n,\phi_h^n) 
+\sum_{n=1}^m \tau_n   ( \mathcal{E}^n, \phi_h^n ) 
+\sum_{n=1}^m \tau_n   ( \mathcal{F}_h^n, \phi_h^n )\notag \\
&= - \sum_{n=1}^m \tau_n  (e_h^{n-1}\cdot\nabla \hat u_h^{n} + u_h^{n-1}\cdot\nabla e_h^n , \phi_h^n ) 
+\sum_{n=1}^m \tau_n   ( \mathcal{E}^n, \phi_h^n ) 
+\sum_{n=1}^m \tau_n   ( \mathcal{F}_h^n, \phi_h^n )   \notag \\
&= \sum_{n=1}^m \tau_n  \Big[ ( \hat u_h^{n} , e_h^{n-1} \cdot\nabla \phi_h^n )  
+ ( e_h^n, u_h^{n-1} \cdot\nabla  \phi_h^n ) \Big] 
+\sum_{n=1}^m \tau_n   ( \mathcal{E}^n, \phi_h^n ) 
+ \sum_{n=1}^m \tau_n   ( \mathcal{F}_h^n, \phi_h^n ) ,
\end{align}
where we have used integration by parts in deriving the last equality. 
The last term on the right-hand side of the above equation can be estimated as follows: 
In the case of $n \ge 2$ we use the decomposition $\mathcal{F}_{h}^n=\mathcal{F}_{h, 1}^n+\mathcal{F}_{h, 2}^n+\mathcal{F}_{h, 3}^n$ with 
\begin{align*}
|(\mathcal{F}_{h, 1}^n, \phi_h^n)|
=&|( P_{X_h} u(t_n) - R_{X_h} u(t_n), A_h \phi_h^n )|\\
\le&\|P_{X_h} u(t_n) - R_{X_h} u(t_n)\|_{L^2}\| A_h \phi_h^n\|_{L^2}\\
\le&Ch \|u(t_n)\|_{H^1} \|A_h \phi_h^n\|_{L^2} 
\quad \mbox{(here \eqref{Error-PXh} and \eqref{Error-Stokes-Ritz} are used)} \\
|(\mathcal{F}_{h, 2}^n, \phi_h^n)|
=& |(P_{X_h} u(t_n)-u(t_n),P_{X_h} u(t_{n-1}) \cdot \nabla \phi_h^n )|  \\
\le&C \|P_{X_h} u(t_n)-u(t_n)\|_{L^2} \|P_{X_h} u(t_{n-1})\|_{L^4} \|\nabla \phi_h^n\|_{L^4}\\
\le&C\|P_{X_h} u(t_{n})-u(t_n)\|_{L^2} \|P_{X_h} u(t_{n-1})\|_{L^2}^{\frac 12} \| P_{X_h} u(t_{n-1})\|_{H^1}^{\frac 12}  \|\nabla  \phi_h^n\|_{L^2}^{\frac 12} \|A_h \phi_h^n\|_{L^2}^{\frac 12}\\
&\hspace{174pt} \mbox{(here \eqref{L4-L2-H1} and Lemma \ref{Lemma:discrete_W14} are used)} \\
\le&C
h \|u(t_n)\|_{H^1} \|u(t_{n-1})\|_{L^2}^{\frac 12}  \|u(t_{n-1})\|_{H^1}^{\frac 12}  \|\nabla  \phi_h^n\|_{L^2}^{\frac 12} \|A_h \phi_h^n\|_{L^2}^{\frac 12}
\quad\mbox{(here \eqref{H1-stability-PXh} is used)}  \\
|(\mathcal{F}_{h, 3}^n, \phi_h^n)|
=&|(u(t_n),(P_{X_h} u(t_{n-1}) - u(t_{n-1})) \cdot \nabla \phi_h^n)|\\
\le&C\|u(t_n)\|_{L^4} \|P_{X_h} u(t_{n-1}) - u(t_{n-1})\|_{L^2}\|\nabla \phi_h^n\|_{L^4}\\
\le& C\|u(t_n)\|_{L^2}^{\frac12} \|u(t_n)\|_{H^1}^{\frac12} h\|u(t_{n-1})\|_{H^1}
\|\nabla  \phi_h^n\|_{L^2}^{\frac 12} \|A_h \phi_h^n\|_{L^2}^{\frac 12} . 
\end{align*}
Since $\|u(t_n)\|_{L^2}\le C$, the three estimates above imply that
\begin{align}\label{F-n-bigger-2}
|( \mathcal{F}_h^n, \phi_h^n )| 
\le& Ch  \|u(t_n)\|_{H^1} \|A_h\phi_h^n\|_{L^2}  \notag \\
&+ Ch  (\|u(t_n)\|_{H^1}^{\frac32}+ \|u(t_{n-1})\|_{H^1}^{\frac32} )\|\nabla  \phi_h^n\|_{L^2}^{\frac 12}\|A_h\phi_h^n\|_{L^2}^{\frac 12} \notag \\
\le& \epsilon \|A_h\phi_h^n\|_{L^2}^2 
+ C\epsilon^{-1} h^2 (\|u(t_n)\|_{H^1}^2 + \|u(t_{n-1})\|_{H^1}^2 ) \notag \\
&+ \epsilon (\|u(t_n)\|_{H^1} + \|u(t_{n-1})\|_{H^1})\|\nabla  \phi_h^n\|_{L^2}\|A_h\phi_h^n\|_{L^2} \quad\mbox{for}\,\,\, n\ge 2 , 
\end{align}
where $\epsilon$ can be an arbitrary positive constant (arising from using Young's inequality). 
In the case $n = 1$, Lemma~\ref{LM:Consistency-Euler} implies that 
\begin{align}\label{F-n--1}
\tau_1 |(\mathcal{F}_h^1, \phi_h^1)|
\le C\tau_1^{\frac12} \|\nabla  \phi_h^1\|_{L^2}
\le C\epsilon^{-1} \tau_1 +\epsilon\|\nabla  \phi_h^1\|_{L^2}^2 
\le C\epsilon^{-1} t_m^{-1} \tau_m^2 +\epsilon\|\nabla  \phi_h^1\|_{L^2}^2 , 
\end{align}
where the last inequality is due to the stepsize choice in \eqref{stepsize}, which implies that 
\begin{align}\label{tau1-estimate}
\tau_1
\le C \tau^{\frac{1}{1-\alpha}} 
\le C \tau^{\frac{1}{1-\alpha}-2 } t_m^{1-2\alpha} (t_m^{2\alpha-1} \tau^2 )
\le C \tau^{\frac{1}{1-\alpha}-2 } t_m^{1-2\alpha} (t_m^{-1} \tau_m^2 )
&\le C \tau^{(\frac{1}{1-\alpha}-2)\alpha} \, t_m^{-1} \tau_m^2 \notag \\ 
&\le C t_m^{-1} \tau_m^2 \,\,\,\,\,\mbox{for}\,\,\,\alpha\in(\mbox{$\frac12$},1) .
\end{align}
By summing $\tau_n|( \mathcal{F}_h^n, \phi_h^n )| $ for $n=1,\dots,m$, and using the estimates in \eqref{F-n-bigger-2}--\eqref{F-n--1}, we obtain 
\begin{align}\label{sum-Fhn-phihn}
\sum_{n=1}^m \tau_n   |( \mathcal{F}_h^n, \phi_h^n )| 
=&\tau_1 |(\mathcal{F}_h^1, \phi_h^1)| + \sum_{n=2}^m \tau_n   |( \mathcal{F}_h^n, \phi_h^n )| \notag \\
\le&\tau_1 |(\mathcal{F}_h^1, \phi_h^1)| 
+  \epsilon \sum_{n=2}^m  \tau_n  \|A_h\phi_h^n\|_{L^2}^2  
+ C\epsilon^{-1} h^2 \sum_{n=2}^m \tau_n (\|u(t_{n-1})\|_{H^1}^2 + \|u(t_n)\|_{H^1}^2) \notag \\
&+ \epsilon \sum_{n=2}^m \tau_n  (\|u(t_n)\|_{H^1} + \|u(t_{n-1})\|_{H^1})\|\nabla  \phi_h^n\|_{L^2}\|A_h\phi_h^n\|_{L^2} \notag \\
\le& C\epsilon^{-1} t_m^{-1} \tau_m^2  +\epsilon\|\nabla  \phi_h^1\|_{L^2}^2 
+\epsilon\sum_{n=1}^m \tau_n \|A_h\phi_h^n\|_{L^2}^2 
+ C\epsilon^{-1} h^2 \sum_{n=1}^m \tau_n \|u(t_n)\|_{H^1}^2 \notag  \\
&+ \epsilon \bigg(\sum_{n=1}^m \tau_n \|u(t_n)\|_{H^1}^2 \bigg)^{\frac 12}
\max_{1\le n\le m} \|\phi_h^n\|_{H^1} \bigg(\sum_{n=1}^m \tau_n \|A_h\phi_h^n\|_{L^2}^2\bigg)^{\frac12} 
\notag \\
\le& 
C\epsilon^{-1} t_m^{-1} \tau_m^2  
+ C\epsilon \sum_{n=1}^m \tau_n \|e_h^{n}\|_{L^2}^2 
+ C\epsilon^{-1} h^2  \sum_{n=1}^m \tau_n \|u(t_n)\|_{H^1}^2 \notag \\
&
+ C\epsilon \bigg( \sum_{n=1}^m \tau_n \|u(t_n)\|_{H^1}^2\bigg)^{\frac 12}
\bigg(\sum_{n=1}^m \tau_n \|e_h^{n}\|_{L^2}^2\bigg)
\quad\mbox{(here \eqref{phih-sta} is used)} . 
\end{align}
By using \eqref{con_err-E} we have 
\begin{align}\label{sum-Ehn-phihn}
\sum_{n=1}^m \tau_n   |( \mathcal{E}_h^n, \phi_h^n )| 
&\le
C\bigg( \sum_{n=1}^m \tau_n^2 t_n^{-\frac 32} \bigg)
\max_{1\le n\le m}
\|\phi_h^n\|_{H^1} \notag\\
&\le
C\bigg( \sum_{n=1}^m \tau_n \tau t_n^{\alpha-\frac 32} \bigg)
\bigg(\sum_{n=1}^m \tau_n \|e_h^{n}\|_{L^2}^2\bigg)^{\frac12} \notag\\
&\le
Ct_m^{\alpha-\frac12} \tau 
\bigg(\sum_{n=1}^m \tau_n \|e_h^{n}\|_{L^2}^2\bigg)^{\frac12}
\,\quad\mbox{since $\alpha>\frac12$} \notag\\
&\le
Ct_m^{-\frac12} \tau_m 
\bigg(\sum_{n=1}^m \tau_n \|e_h^{n}\|_{L^2}^2\bigg)^{\frac12}
\quad\mbox{here we used \eqref{stepsize}} \notag\\
&\le
C \epsilon^{-1}t_m^{-1} \tau_m^2 + \epsilon\sum_{n=1}^m \tau_n \|e_h^{n}\|_{L^2}^2 .
\end{align}
Note that 
\begin{align}
&\sum_{n=1}^m \tau_n \|\hat u_h^{n}\|_{L^4}^4
\le
C\sum_{n=1}^m \tau_n \|\hat u_h^{n}\|_{L^2}^2 \|\hat u_h^{n}\|_{H^1}^2 
\le
C\sum_{n=1}^m \tau_n  \|\hat u_h^{n}\|_{H^1}^2 
\le 
C\sum_{n=1}^m \tau_n  \| u(t_n)\|_{H^1}^2 , \\ 
&\sum_{n=1}^m \tau_n \| u_h^{n-1}\|_{L^4}^4
\le
C\sum_{n=1}^m \tau_n \| u_h^{n-1}\|_{L^2}^2 \|u_h^{n-1}\|_{H^1}^2 
\le
C\sum_{n=1}^m \tau_n  \|u_h^{n-1}\|_{H^1}^2  ,\\ 
&\sum_{n=1}^m \tau_n \|\nabla\phi_h^n\|_{L^4}^4
\le
C\sum_{n=1}^m \tau_n \|\phi_h^n\|_{H^1}^2 \|A_h\phi_h^n\|_{L^2}^2
\le
C\max_{1\le n\le m}\|\phi_h^n\|_{H^1}^2 
\sum_{n=1}^m \tau_n \|A_h\phi_h^n\|_{L^2}^2 \notag \\
&\hspace{196pt} \le C\bigg(\sum_{n=1}^m \tau_n \|e_h^{n}\|_{L^2}^2\bigg)^2,
\label{phi_h_L4L4}
\end{align}
which are consequences of  \eqref{L4-L2-H1} and Lemma \ref{Lemma:discrete_W14}. 
Substituting \eqref{sum-Fhn-phihn}--\eqref{sum-Ehn-phihn} into \eqref{L2L2-eh-n-1} and using the three estimates above, we obtain 
\begin{align}\label{L2L2-eh-n-2}
&\sum_{n=1}^m \tau_n \|e_h^n\|_{L^2}^2 \notag \\
&\le
C\bigg(\sum_{n=1}^m \tau_n \|\hat u_h^n \|_{L^4}^4\bigg)^{\frac14} 
\bigg(\sum_{n=1}^m \tau_n \|e_h^{n-1}\|_{L^2}^2\bigg)^{\frac12} 
\bigg(\sum_{n=1}^m \tau_n \|\nabla \phi_h^n\|_{L^4}^4\bigg)^{\frac14} \notag \\
&\quad\, 
+ C\bigg(\sum_{n=2}^m \tau_n \|u_h^{n-1}\|_{L^4}^4\bigg)^{\frac14} 
\bigg( \sum_{n=2}^m\tau_n \|e_h^{n}\|_{L^2}^2\bigg)^{\frac12} 
\bigg( \sum_{n=2}^m \tau_n \|\nabla \phi_h^n\|_{L^4}^4\bigg)^{\frac14} \notag \\
&\quad\,  + \tau_1|( e_h^1, u_h^0 \cdot\nabla  \phi_h^1 )| 
+\sum_{n=1}^m \tau_n   |( \mathcal{E}_h^n, \phi_h^n )|
+\sum_{n=1}^m \tau_n   |( \mathcal{F}_h^n, \phi_h^n )| \notag \\
&\le
C
\bigg(\sum_{n=1}^m \tau_n  \|\nabla u(t_n)\|_{L^2}^2
+\sum_{n=2}^m \tau_n  \|\nabla u_h^{n-1}\|_{L^2}^2\bigg)^{\frac14} \bigg(\sum_{n=1}^m \tau_n \|e_h^{n}\|_{L^2}^2\bigg)^{\frac12}  \bigg(\sum_{n=1}^m \tau_n \|\nabla \phi_h^n\|_{L^4}^4\bigg)^{\frac14} \notag \\
&\quad\,  + \tau_1|( e_h^1, u_h^0 \cdot\nabla  \phi_h^1 )| 
+\sum_{n=1}^m \tau_n   |( \mathcal{E}_h^n, \phi_h^n )|
+\sum_{n=1}^m \tau_n   |( \mathcal{F}_h^n, \phi_h^n )| \notag \\
&\le
C
\bigg(\sum_{n=1}^m \tau_n  \| u(t_n)\|_{H^1}^2
+\sum_{n=2}^m \tau_n  \| u_h^{n-1}\|_{H^1}^2\bigg)^{\frac14} \bigg(\sum_{n=1}^m \tau_n \|e_h^{n}\|_{L^2}^2\bigg) \qquad \mbox{(here \eqref{phi_h_L4L4} is used)} \notag \\
&\quad\,
+ \tau_1|( e_h^1, u_h^0 \cdot\nabla  \phi_h^1 )|  
+ C\epsilon^{-1} t_m^{-1} \tau_m^2 
+ C\epsilon \sum_{n=1}^m \tau_n \|e_h^{n}\|_{L^2}^2 
+ C\epsilon^{-1} h^2  \sum_{n=1}^m \tau_n \|u(t_n)\|_{H^1}^2 \notag \\
&\quad\,
+ C\epsilon \bigg( \sum_{n=1}^m \tau_n \|u(t_n)\|_{H^1}^2\bigg)^{\frac 12}
\bigg(\sum_{n=1}^m \tau_n \|e_h^{n}\|_{L^2}^2\bigg) 
\end{align}

The remaining term $\tau_1|(e_h^1, u_h^0 \cdot\nabla  \phi_h^1)|$ in \eqref{L2L2-eh-n-2} can be estimated by using the basic energy estimate:
\begin{align}\label{L2-H1-eh1}
\|e_h^1\|_{L^2}^2 +  \tau_1 \|\nabla e_h^1\|_{L^2}^2  \le C , 
\end{align}
which is a combination of \eqref{FD-basic-energy} and the regularity estimate (see Lemma \ref{lemma:regularity-u}) 
$$
\|u(t_1)\|_{L^2}^2 +  \tau_1 \|\nabla u(t_1)\|_{L^2}^2  \le C
$$ 
through the triangle inequality. By using \eqref{L2-H1-eh1} we have 
\begin{align}
\tau_1|(e_h^1, u_h^0 \cdot\nabla  \phi_h^1 )|
\le& C\tau_1\|e_h^1\|_{L^4}\|u_h^0\|_{L^2}\|\nabla  \phi_h^1\|_{L^4} \notag\\
\le& C\tau_1\|e_h^1\|_{L^2}^{\frac12} \|\nabla e_h^1 \|_{L^2}^{\frac12} \|u_h^0\|_{L^2} \|\nabla  \phi_h^1\|_{L^2}^{\frac12}\|A_h \phi_h^1\|_{L^2}^{\frac12}
\notag\\
\le& C \tau_1^{\frac 34}\|e_h^1\|_{L^2}^{\frac12} \|\nabla \phi_h^1\|_{L^2}^{\frac12}\|A_h \phi_h^1\|_{L^2}^{\frac12} 
\quad\mbox{(using \eqref{L2-H1-eh1} and $\|u_h^0\|_{L^2}\le C$)}
\notag\\
\le&C\epsilon^{-1} \tau_1\|e_h^1\|_{L^2} + \epsilon \tau_1^{\frac 12}\|\phi_h^1\|_{H^1}\|A_h \phi_h^1\|_{L^2}
\notag\\
\le&C\epsilon^{-3} \tau_1   + \epsilon \sum_{n=1}^m \tau_n \|e_h^n\|_{L^2}^2 + \frac 12\epsilon (\|\phi_h^1\|_{H^1}^2 + \tau_1\|A_h \phi_h^1\|_{L^2}^2) \notag\\
\le&C\epsilon^{-3} t_m^{-1} \tau_m^2   + C \epsilon \sum_{n=1}^m \tau_n \|e_h^n\|_{L^2}^2 ,
\end{align}
where we have used \eqref{phih-sta} and \eqref{tau1-estimate} in deriving the last inequality. Substituting the last inequality into \eqref{L2L2-eh-n-2} and choosing a sufficiently small constant $\epsilon$ (so that the term $C\epsilon\sum_{n=1}^m \tau_n \|e_h^n\|_{L^2}^2$ can be absorbed by the left-hand side), we obtain that 
\begin{align}\label{L2-L2-ehn}
\sum_{n=1}^m \tau_n \|e_h^n\|_{L^2}^2 
&\le
C
\bigg[ \bigg(\sum_{n=1}^m \tau_n  (\|\nabla u(t_n)\|_{L^2}^2 + \|\nabla u_h^n\|_{L^2}^2) \bigg)^{\frac14} 
+\bigg( \sum_{n=1}^m \tau_n \|u(t_n)\|_{H^1}^2\bigg)^{\frac 12}\bigg]
\bigg(\sum_{n=1}^m \tau_n \|e_h^{n}\|_{L^2}^2\bigg) \notag\\
&\quad\, + Ct_m^{-1} \tau_m^2  + Ch^2 \sum_{n=1}^m \tau_n \|u(t_n)\|_{H^1}^2 
\end{align}

The first and last terms on the right-hand side of \eqref{L2-L2-ehn} can be dealt with using the following lemma. 
\begin{lemma}\label{Lemma:L2L2uhn}
For any given $u^0\in \dot L^2$ the following result holds: 
$$
\sum_{n=1}^N \tau_n (\|\nabla u(t_n)\|_{L^2}^2 + \|\nabla u_h^n\|_{L^2}^2) \le C . 
$$
Furthermore, for any $\varepsilon>0$ there exists positive constants $T_\varepsilon$, $h_\varepsilon$ and $\tau_\varepsilon$ {\rm(}depending on $u^0$, but independent of $\tau$ and $h)$ such that for $h\le h_\varepsilon$ and $\tau\le \tau_\varepsilon$ the following result holds: 
$$
\sum_{n=1}^m \tau_n (\|\nabla u(t_n)\|_{L^2}^2 + \|\nabla u_h^n\|_{L^2}^2) \le \varepsilon
\quad
\forall\, t_m \in (0,T_\varepsilon] . 
$$
\end{lemma}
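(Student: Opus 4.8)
The plan is to bound the two summands separately and, for each, to reduce the discrete sum to the continuous energy $\int_0^{t_m}\|\nabla u\|_{L^2}^2\,\d t$, which is finite by the energy equality \eqref{Eq:u_energy_eq-L2} and tends to $0$ as $t_m\to0$: since $u\in C([0,T];\dot L^2)$ we have $\int_0^{t}\|\nabla u\|_{L^2}^2\,\d s=\tfrac12(\|u^0\|_{L^2}^2-\|u(t)\|_{L^2}^2)\to0$. Write $g(t)=\|\nabla u(t)\|_{L^2}^2$ and $G(t)=\int_0^t g$. The discrete term is immediate for the first assertion: testing \eqref{semi-Euler} by $u_h^n$ and summing gives \eqref{FD-basic-energy}, hence $\sum_{n=1}^N\tau_n\|\nabla u_h^n\|_{L^2}^2\le\tfrac12\|u^0\|_{L^2}^2\le C$.

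The heart of the matter is the continuous sum $\sum_{t_n\le t_m}\tau_n g(t_n)$: the naive pointwise bound $g(t_n)\le C t_n^{-1}$ from Lemma \ref{lemma:regularity-u} makes $\sum\tau_n t_n^{-1}$ logarithmically divergent, and the crude two-sided Riemann error $|g'|\le Ct^{-2}$ is $\Theta(1)$, concentrated in the first intervals near $t_1$ where $\tau_n\sim t_n$. I would first establish a weighted \emph{one-sided} estimate. Testing \eqref{pde_abstract} by $-Au$ and using \eqref{L4-L2-H1}, \eqref{W14-H1-H2}, the Stokes $H^2$-regularity $\|u\|_{H^2}\le C\|Au\|_{L^2}$ (cf. \cite{Kellogg-Osborn-1976}) and $\|u\|_{L^2}\le C$ gives $\tfrac{\d}{\d t}g+\tfrac12\|Au\|_{L^2}^2\le C g^2$. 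Applying Gronwall to $\phi(t)=tg(t)$, which satisfies $\phi'\le g(1+C\phi)$, together with $\liminf_{t\to0}\phi=0$ (valid since $g\in L^1(0,T)$), yields the key estimate $t\,g(t)\le C_*\,G(t)$; in particular $tg(t)\to0$, and feeding this back gives $\tfrac{\d}{\d t}(t g)\le C' g$, i.e. $[g'(t)]_+\le C'g(t)/t$. With this, the right-endpoint Riemann error over $[t_{n-1},t_n]$, namely $R_n=\int_{t_{n-1}}^{t_n}\!\int_s^{t_n}g'(\sigma)\,\d\sigma\,\d s$, has positive part $[R_n]_+\le\tau_n\int_{t_{n-1}}^{t_n}[g']_+\le C'\tfrac{\tau_n}{t_{n-1}}\int_{t_{n-1}}^{t_n} g\le C\,(G(t_n)-G(t_{n-1}))$, where $\tau_n/t_{n-1}\le C$ follows from \eqref{stepsize}; summing for $n\ge2$ and adding $\tau_1 g(t_1)=t_1 g(t_1)\le C_*G(t_1)$ gives the clean bound $\sum_{t_n\le t_m}\tau_n g(t_n)\le C\,G(t_m)=C\int_0^{t_m}\|\nabla u\|_{L^2}^2\,\d s$. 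Taking $t_m=T$ gives boundedness, and choosing $T_\varepsilon$ so this is $\le\varepsilon/2$ gives the continuous part of the smallness claim.

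For the discrete smallness I would combine monotonicity in $m$ with convergence. The discrete energy identity (summing $(u_h^n-u_h^{n-1},u_h^n)=\tfrac12(\|u_h^n\|_{L^2}^2-\|u_h^{n-1}\|_{L^2}^2+\|u_h^n-u_h^{n-1}\|_{L^2}^2)$) gives $\sum_{n=1}^m\tau_n\|\nabla u_h^n\|_{L^2}^2\le\tfrac12(\|u_h^0\|_{L^2}^2-\|u_h^m\|_{L^2}^2)$, which is nondecreasing in $m$, so it suffices to bound it at the largest $t_m\le T_\varepsilon$. Using the convergence of the numerical solution established in the Appendix, $u_h^m\rightharpoonup u(T_\varepsilon)$ in $\dot L^2$ as $\tau,h\to0$; hence by weak lower semicontinuity of the norm $\liminf\|u_h^m\|_{L^2}^2\ge\|u(T_\varepsilon)\|_{L^2}^2\ge\|u^0\|_{L^2}^2-\varepsilon/2$, and with $\|u_h^0\|_{L^2}\le\|u^0\|_{L^2}$ this forces the dissipation to be $\le\varepsilon/2$ for all $\tau\le\tau_\varepsilon$ and $h\le h_\varepsilon$. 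Monotonicity in $m$ then propagates the bound to every $t_m\le T_\varepsilon$, and adding the two halves completes the proof.

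I expect the main obstacle to be the continuous sum, specifically extracting the weighted one-sided derivative estimate $[g']_+\le C'g/t$ and its companion $t g(t)\le C_*G(t)$: only the \emph{positive} part of the Riemann-sum error may be discarded, and the usual two-sided bound $|g'|\le Ct^{-2}$ is too lossy near $t_1$. The discrete half is comparatively routine once the convergence of the Appendix is invoked to lower-bound $\|u_h^m\|_{L^2}$, the only subtlety being that the convergence is needed pointwise-in-time at $t\approx T_\varepsilon$, which is legitimate because $u\in C([0,T];\dot L^2)$ and the discrete solutions are uniformly bounded in $L^\infty(0,T;\dot L^2)$.
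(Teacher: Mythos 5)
Your proposal is correct, but it reaches the conclusion by a genuinely different route on both halves. For the continuous sum $\sum_n\tau_n\|\nabla u(t_n)\|_{L^2}^2$, the paper (Lemma \ref{Lemma:FEM-small}) replaces $\|\nabla u(t_n)\|_{L^2}^2$ by an average over $[t_{n-\frac12},t_n]$ and reduces everything to the weighted bound $\|t\,\partial_tu\|_{L^2(0,s;H^1)}^2\le C\|u\|_{L^2(0,s;H^1)}^2$, proved by testing the time-differentiated equation with $t^2\partial_tu$ and the equation with $t\partial_tu$; you instead work directly with $g(t)=\|\nabla u(t)\|_{L^2}^2$, derive $g'\le Cg^2$ by testing with $-Au$, bootstrap to $tg(t)\le C\int_0^tg$ and $[g']_+\le Cg/t$, and discard only the positive part of the right-endpoint Riemann error, using $\tau_n/t_{n-1}\le C$ from the graded mesh \eqref{stepsize}. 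Both arguments are sound; yours is more elementary and makes transparent exactly where the one-sidedness and the mesh grading enter, while the paper's route produces the auxiliary weighted $H^1$-estimate on $\partial_tu$ as a reusable byproduct and does not need the endpoint structure of the quadrature. For the discrete sum, the paper (Lemma \ref{Lemma:FD-small}) establishes the \emph{uniform-in-time} convergence \eqref{converg-CL2} of $\|u_{\tau,h}(\cdot,t)\|_{L^2}$ by a contradiction argument (needed because near $t=0$ only $\dot H^{-1}$ convergence plus weak lower semicontinuity is available), whereas you observe that the partial dissipation sums are nondecreasing in $m$, so only the largest $t_m\le T_\varepsilon$ matters, where the convergence of \eqref{converg-utauh-1} is already strong; this neatly sidesteps the contradiction argument. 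The only points to make explicit in a write-up are the justification of $\liminf_{t\to0}tg(t)=0$ from $g\in L^1(0,T)$ before applying Gronwall (which you do note), and that the smoothness of $u$ for $t>0$ from Lemma \ref{lemma:regularity-u} licenses the pointwise differential inequalities on $(0,T]$.
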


The proof of Lemma \ref{Lemma:L2L2uhn} is deferred to Section \ref{sec:AppendixA}. 

By using Lemma \ref{Lemma:L2L2uhn}, there exist constants $T_*$, $h_*$ and $\tau_*$ such that for $h\le h_*$, $\tau\le \tau_*$ and $t_m \le T_*$ the quantity 
$$
\sum_{n=1}^m \tau_n (\|\nabla u(t_n)\|_{L^2}^2 + \|\nabla u_h^n\|_{L^2}^2)
$$
is sufficiently small so that the first term on the right-hand side of \eqref{L2-L2-ehn} can be absorbed by the left hand side, and the last term on the right-hand side of \eqref{L2-L2-ehn} is bounded by $Ch^2$. Then we obtain 
\begin{align}\label{L2-L2-ehn-f}
&\sum_{n=1}^m \tau_n \|e_h^n\|_{L^2}^2 
\le C(t_m^{-1} \tau_m^2  + h^2) \quad\mbox{for}\,\,\, t_m \in (0,T_*]. 
\end{align}
Hence, if we consider the problem in the time interval $[0,T_*]$, we obtain an error estimate \eqref{L2-L2-ehn-f} in the discrete $L^2(0,T_*;L^2)$ norm. 

The error bound in \eqref{L2-L2-ehn-f} can be furthermore improved to an $L^2$ norm at a fixed time. To this end, we denote by $\chi(t)$ the nonnegative smooth cut-off function such that 
\begin{align}\label{def-chi-t-1}
\chi(t)
=\left\{
\begin{aligned}
&0 &&\mbox{for}\,\,\,t\in(0,t_m/4],\\
&1 &&\mbox{for}\,\,\,t\in[t_m/2,\infty) ,
\end{aligned}
\right.
\quad\mbox{and}\quad
|\partial_t^k\chi(t)| \le Ct_m^{-k}\,\,\, \mbox{for}\,\,\, k=0,1,2,\dots 
\end{align}
which satisfies 
$|\partial_t\chi(t)|\le Ct_m^{-1}$. 
Then, testing \eqref{Error_Eq_FD} by $\chi(t_n) e_h^n$, we obtain   
\begin{align} \label{point-L2-1}
&\frac{\chi(t_n) \| e_h^n \|_{L^2}^2 - \chi(t_{n-1}) \|e_h^{n-1}\|_{L^2}^2  
+\chi(t_n) \|e_h^n - e_h^{n-1} \|_{L^2}^2 }{2 \tau_n}
+\chi(t_n)  \|\nabla e_h^n\|_{L^2}^2 \notag \\ 
&= 
\chi(t_n) (\hat u_h^{n},e_h^{n-1}\cdot\nabla e_h^n) 
+ \chi(t_n) (\mathcal{E}^n,e_h^n) 
+ \chi(t_n) (\mathcal{F}_h^n,e_h^n) 
+ \frac{(\chi(t_n) -\chi(t_{n-1}))\| e_h^{n-1} \|_{L^2}^2}{2 \tau_n} \notag \\
&\le 
\chi(t_n) \|\hat u_h^{n}\|_{L^4}\|e_h^{n-1}\|_{L^4}\|\nabla e_h^n\|_{L^2}
\notag \\
&\quad\, 
+   C\chi(t_n)(\tau_n t_n^{-\frac 32} + h t_n^{-1}) \|\nabla e_h^n\|_{L^2} 
+ \max_{t\in[t_{n-1},t_n]} |\partial_t\chi(t)| \| e_h^{n-1} \|_{L^2}^2
\quad\mbox{(Lemma \ref{LM:Consistency-Euler} is used)} \notag \\
&\le 
\chi(t_n) \|\hat u_h^{n}\|_{L^2}^{\frac12}\|\hat u_h^{n}\|_{H^1}^{\frac12}
\|e_h^{n-1}\|_{L^2}^{\frac12}\|\nabla e_h^{n-1}\|_{L^2}^{\frac12}
\|\nabla e_h^n\|_{L^2} \notag \\
&\quad\, 
+  C\chi(t_n)(\tau_n t_n^{-\frac 32} + h t_n^{-1}) \|\nabla e_h^n\|_{L^2} 
+ Ct_m^{-1}  \| e_h^{n-1} \|_{L^2}^2 \notag \\
&\le 
C\chi(t_n) \|\hat u_h^{n}\|_{H^1}^2 \|e_h^{n-1}\|_{L^2}^2
+ C\chi(t_n)(\tau_n^2 t_n^{-3} + h^2 t_n^{-2})
+ Ct_m^{-1}  \| e_h^{n-1} \|_{L^2}^2  \notag \\
&\quad\, 
+ \frac14 \chi(t_n) (\|\nabla e_h^{n-1}\|_{L^2}^2 + \|\nabla e_h^n\|_{L^2}^2) \notag \\
&\le 
C\|u(t_n)\|_{H^1}^2 \chi(t_n) \|e_h^{n-1}\|_{L^2}^2
+  C\chi(t_n)(\tau_n^2 t_n^{-3} + h^2 t_n^{-2}) 
+ Ct_m^{-1}  \| e_h^{n-1} \|_{L^2}^2  \notag \\
&\quad\, 
+ \frac14 \chi(t_n) (\|\nabla e_h^{n-1}\|_{L^2}^2 + \|\nabla e_h^n\|_{L^2}^2) , 
\end{align}
where in the last inequality we have used $\|\hat u_h^{n}\|_{H^1} = \| P_{X_h}u(t_n)\|_{H^1} \le C\|u(t_n)\|_{H^1}$ as a result of \eqref{H1-stability-PXh}. 
Absorbing the last term of \eqref{point-L2-1} by its left-hand side and applying the discrete version of Gronwall's inequality (cf. \cite[Lemma 5.1]{Heywood-Rannacher-1990}), we obtain 
\begin{align} 
&\max_{1\le n\le m} \chi(t_n) \| e_h^n \|_{L^2}^2 \notag \\ 
&\le 
\exp\Big(\sum_{n=1}^m\tau_n \|u(t_n)\|_{H^1}^2 \Big) 
\bigg(C \sum_{n=1}^m \chi(t_n) (\tau_n^3 t_n^{-3}+ \tau_n h^2t_n^{-2})
+  \sum_{n=1}^m\tau_n Ct_m^{-1}   \| e_h^{n-1} \|_{L^2}^2 \bigg) \notag \\
&\le 
\exp\big(\sum_{n=1}^m\tau_n \|u(t_n)\|_{H^1}^2 \big) 
\bigg( C  (\tau_m^2 t_m^{-3}+ h^2 t_m^{-2}) \sum_{n=1}^m \tau_n  1_{t_n\ge \frac{t_m}{4}} 
+ C t_m^{-1} \sum_{n=1}^m\tau_n \| e_h^{n-1} \|_{L^2}^2 \bigg) \notag \\
&\le C\tau_m^2 t_m^{-2} + Ch^2t_m^{-1} , 
\end{align}
where the last inequality uses Lemma \ref{Lemma:L2L2uhn} and \eqref{L2-L2-ehn-f}. 
Since this inequality holds for all $m\ge 1$ such that $t_m \in(0,T_*]$, it follows that 
\begin{align}\label{L2-Error-T*}
&\| e_h^n \|_{L^2} 
\le
C  (t_n^{-2} \tau_n^2  + t_n^{-1}  h^2) \quad\mbox{for}\,\,\, t_n \in (0,T_*].
\end{align}
This proves the desired error bound in a time interval $(0,T_*]$, where $T_*$ is a sufficiently small constant (depending on $u^0$ but independent of $\tau$ and $h$). In the next subsection, we extend the error estimate to the whole time interval $[0,T]$. 

\subsection{Error analysis in $[0,T]$} 
$\,$

Let $k$ be the maximal integer such that $t_k\in(0,T_*]$. When $\tau\le T_*/4$ there holds $t_k\ge T_*/2$ and therefore $t_k^{-1}\le C$. In this case, \eqref{L2-Error-T*} implies 
\begin{align}\label{Initial_Error_tm}
&\| e_h^k \|_{L^2} \le C(\tau_k + h) ,
\end{align}
and Lemma \ref{lemma:regularity-u} implies that 
\begin{align}\label{Ineq:discrete_u_t-T}
\|\partial_t^mu\|_{L^2}+ \|\partial_t^mu\|_{H^1} 
+ \|\partial_t^mu\|_{H^2}
\le C_m ,\,\,\, \forall\, t\in [t_k,T], \,\, m=0,1,\dots
\end{align}
Since $t_k^{-1}\le C$, the estimates in Lemma \ref{LM:Consistency-Euler} reduce to  
\begin{align*}
|(\mathcal{E}^n, v_h)| + |(\mathcal{F}_h^n, v_h)| \le C(\tau_n + h)\|\nabla v_h\|_{L^2} \quad\mbox{for}\,\,\, k+1\le n\le N, \quad v_h \in X_h. 
\end{align*}
Then, testing the error equation \eqref{Error_Eq_FD} by $e_h^n$, we obtain 
\begin{align*} 
&\frac{ \| e_h^n \|_{L^2}^2 - \|e_h^{n-1}\|_{L^2}^2 +\|e_h^n - e_h^{n-1} \|_{L^2}^2 }{2 \tau_n}
+ \|\nabla e_h^n\|_{L^2}^2 \notag \\ 
&= 
(\hat u_h^n,e_h^{n-1}\cdot\nabla e_h^n) 
+ (\mathcal{E}^n,e_h^n) +  (\mathcal{F}_h^n,e_h^n)\notag \\
&\le 
\|\hat u_h^n\|_{L^4}\|e_h^{n-1}\|_{L^4}\|\nabla e_h^n\|_{L^2}
+ C(\tau_n + h)\|\nabla e_h^n\|_{L^2} \notag \\
&\le 
C \|\hat u_h^n\|_{L^2}^{\frac12}\|\hat u_h^n\|_{H^1}^{\frac12}
\|e_h^{n-1}\|_{L^2}^{\frac12}\|\nabla e_h^{n-1}\|_{L^2}^{\frac12}
\|\nabla e_h^n\|_{L^2}  
+ C(\tau_n + h)\|\nabla e_h^n\|_{L^2} \notag \\
&\le 
C \|\hat u_h^n\|_{H^1}^2 \|e_h^{n-1}\|_{L^2}^2
+ C (\tau_n^2 + h^2)
+ \frac14 (\|\nabla e_h^{n-1}\|_{L^2}^2 + \|\nabla e_h^n\|_{L^2}^2) \notag \\
&\le 
C \|e_h^{n-1}\|_{L^2}^2
+ C (\tau_n^2 + h^2)
+ \frac14 (\|\nabla e_h^{n-1}\|_{L^2}^2 + \|\nabla e_h^n\|_{L^2}^2) 
\qquad\mbox{for}\,\,\,  n\ge k + 1 ,
\end{align*}
where we have used the regularity estimate 
$$
\|\hat u_h^n\|_{H^1}
=
\|P_{X_h} u(t_n)\|_{H^1} \le
\|u(t_n)\|_{H^1}\le C 
\quad\mbox{as a result of \eqref{H1-stability-PXh} and \eqref{Ineq:discrete_u_t-T}} .  
$$
By applying the discrete version of Gronwall's inequality, we obtain 
\begin{align}\label{H2-data-error}
\max_{k+1\le n\le N}\| e_h^n \|_{L^2} \le C\| e_h^k \|_{L^2}+C(\tau + h).  
\end{align}
which together with \eqref{Initial_Error_tm} yields the desired result of Theorem \ref{THM:FEM-Euler}. 
\hfill\endproof

In the proof of Theorem \ref{THM:FEM-Euler} we have used the key technical Lemma \ref{Lemma:L2L2uhn}, which is proved in the next subsection.

\subsection{Proof of Lemma \ref{Lemma:L2L2uhn}}\label{sec:AppendixA}

Lemma \ref{Lemma:L2L2uhn} is a combination of \eqref{FD-basic-energy} and the following two lemmas (Lemma \ref{Lemma:FD-small} and Lemma \ref{Lemma:FEM-small}).

\begin{lemma}\label{Lemma:FD-small}
Let $u^0\in \dot L^2$ be given. Then for any $\varepsilon>0$ there exist positive constants $T_\varepsilon$, $h_\varepsilon$ and $\tau_\varepsilon$ such that for $h\le h_\varepsilon$ and $\tau\le \tau_\varepsilon$ there holds 
\begin{align}\label{FD-L2H1-small-epsilon}
\sum_{n=1}^m \tau_n \|\nabla u_h^{n}\|_{L^2}^2 \le \varepsilon
\quad
\forall\, t_m\in(0,T_\varepsilon]  . 
\end{align}
The constants $T_\varepsilon$, $h_\varepsilon$ and $\tau_\varepsilon$  may depend on $u^0$ but are independent of $\tau$ and $h$. 
\end{lemma}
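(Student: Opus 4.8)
The plan is to deduce the estimate from the discrete energy identity, a monotonicity observation, and a compactness argument that reduces the uniform-in-$(\tau,h)$ smallness to a smallness statement for the continuous solution coming from the energy equality \eqref{Eq:u_energy_eq-L2}. First I would test \eqref{fully-FEM-Euler} with $v_h=u_h^n$ and use $(u_h^{n-1}\cdot\nabla u_h^n,u_h^n)=0$ (valid since $\nabla\cdot u_h^{n-1}=0$) to obtain the per-step identity
\begin{align*}
\frac{\|u_h^n\|_{L^2}^2-\|u_h^{n-1}\|_{L^2}^2+\|u_h^n-u_h^{n-1}\|_{L^2}^2}{2\tau_n}+\|\nabla u_h^n\|_{L^2}^2=0 ,
\end{align*}
which shows that $\|u_h^n\|_{L^2}$ is nonincreasing in $n$ and, after summation, yields
\begin{align*}
\sum_{n=1}^m\tau_n\|\nabla u_h^n\|_{L^2}^2\le \tfrac12\big(\|u_h^0\|_{L^2}^2-\|u_h^m\|_{L^2}^2\big) .
\end{align*}
Every summand on the left is nonnegative, so the partial sums are monotone increasing in $m$; hence, for a fixed threshold $S>0$ it suffices to bound the total dissipation up to $S$. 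Writing $m_S$ for the largest index with $t_{m_S}\le S$ and using $\|u_h^0\|_{L^2}=\|P_{X_h}u^0\|_{L^2}\le\|u^0\|_{L^2}$, the display above gives $\sum_{n=1}^m\tau_n\|\nabla u_h^n\|_{L^2}^2\le \tfrac12(\|u^0\|_{L^2}^2-\|u_h^{m_S}\|_{L^2}^2)$ for \emph{all} $t_m\le S$.

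Next I would fix $S$ using the continuous energy equality \eqref{Eq:u_energy_eq-L2}: since $\|\nabla u\|_{L^2(0,S;L^2)}^2=\tfrac12(\|u^0\|_{L^2}^2-\|u(S)\|_{L^2}^2)\to0$ as $S\to0$ (because $u\in C([0,T];\dot L^2)$), we may pick $S=T_\varepsilon$ with $\tfrac12(\|u^0\|_{L^2}^2-\|u(S)\|_{L^2}^2)<\varepsilon/2$. It then remains to show that, for all sufficiently small $h$ and $\tau$, $\|u_h^{m_S}\|_{L^2}^2$ is at least $\|u(S)\|_{L^2}^2-\varepsilon$; this is the crux of the proof, and it requires the strong convergence of the fully discrete solution to the weak solution on the \emph{fixed} interval $[0,S]$.

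To obtain that convergence I would argue by compactness. The basic estimate \eqref{FD-basic-energy} bounds the piecewise-in-time interpolant of $\{u_h^n\}$ uniformly in $L^\infty(0,S;\dot L^2)\cap L^2(0,S;\dot H^1_0)$, while the scheme \eqref{semi-Euler} controls the discrete time increments in the dual norm: $(A_hu_h^n,v_h)$ by $\|\nabla u_h^n\|_{L^2}\|\nabla v_h\|_{L^2}$ and $(u_h^{n-1}\cdot\nabla u_h^n,v_h)=-(u_h^{n-1}\cdot\nabla v_h,u_h^n)$ by $\|u_h^{n-1}\|_{L^4}\|u_h^n\|_{L^4}\|\nabla v_h\|_{L^2}$ via \eqref{L4-L2-H1}, which together with the $L^2$-in-time bound on $\|\nabla u_h^n\|_{L^2}$ places the time increments in a discrete $L^2(0,S;\dot H^{-1})$ bound. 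A discrete Aubin--Lions argument then extracts a subsequence converging strongly in $L^2(0,S;\dot L^2)$, and passing to the limit in the nonlinear term (strong $L^2 L^2$ plus weak $L^2 H^1$) identifies the limit as the unique weak solution $u$, so the whole family converges; the convergence $u_h^0=P_{X_h}u^0\to u^0$ in $\dot L^2$ follows from \eqref{Xh-X-approx} and density. Because $\|u_h^n\|_{L^2}^2$ and $\|u(t)\|_{L^2}^2$ are both nonincreasing and $\|u(\cdot)\|_{L^2}^2$ is continuous, strong $L^2(0,S;L^2)$ convergence upgrades to the pointwise statement $\|u_h^{m_S}\|_{L^2}^2\ge\|u_{h,\tau}(S)\|_{L^2}^2\to\|u(S)\|_{L^2}^2$ (a monotone sequence converging in $L^1$ to a continuous limit converges at every point). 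Finally, a standard subsequence/contradiction argument turns this convergence along sequences into thresholds $h_\varepsilon,\tau_\varepsilon$ for which $\tfrac12(\|u^0\|_{L^2}^2-\|u_h^{m_S}\|_{L^2}^2)<\varepsilon$ whenever $h\le h_\varepsilon$ and $\tau\le\tau_\varepsilon$, giving \eqref{FD-L2H1-small-epsilon} with $T_\varepsilon=S$. The main obstacle is precisely this compactness/convergence step with merely $\dot L^2$ initial data: no convergence rate is available (that is what Theorem~\ref{THM:FEM-Euler} ultimately establishes) and the time-regularity of the numerical solution degenerates as $t\to0$, so one must work on the fixed interval $[0,S]$ and exploit monotonicity rather than any quantitative estimate.
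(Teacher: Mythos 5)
Your proposal is correct and shares its skeleton with the paper's proof: both start from the discrete energy identity obtained by testing with $u_h^n$, telescope it to $\sum_{n\le m}\tau_n\|\nabla u_h^n\|_{L^2}^2\le\tfrac12(\|u_h^0\|_{L^2}^2-\|u_h^m\|_{L^2}^2)$, invoke the continuous energy equality \eqref{Eq:u_energy_eq-L2} to make the continuous dissipation small on $(0,T_\varepsilon]$, and then reduce everything to showing that the discrete kinetic energy converges to the continuous one --- a step that in both arguments rests on the compactness/identification machinery of Appendix~\ref{section:AppendixB} (your ``discrete Aubin--Lions plus passage to the limit in the nonlinear term'' is exactly that). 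The one place where you genuinely diverge is how the statement ``for all $t_m\in(0,T_\varepsilon]$'' is handled. The paper keeps $t_m$ variable, so it must control $\|u(t_m)\|_{L^2}^2-\|u_h^m\|_{L^2}^2$ uniformly down to $t_m\to0$; this forces the delicate claim \eqref{converg-CL2} (uniform convergence of the norms on all of $[0,T]$), proved by contradiction using the $C([0,T];\dot H^{-1})$ convergence, the bound $\|u_{\tau,h}\|_{L^2}\le\|u^0\|_{L^2}$, and weak lower semicontinuity of the norm. You instead observe that the partial sums are nondecreasing in $m$, so it suffices to bound the dissipation at the single fixed time $S=T_\varepsilon$, which is bounded away from $0$; this sidesteps the paper's most delicate sub-argument entirely and only requires convergence of the energy at one interior time. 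Your mechanism for that pointwise convergence (monotone discrete energies converging in $L^1$ in time to a continuous monotone limit converge at every point) is sound, with one small presentational caveat: the norm of the piecewise \emph{linear} interpolant $u_{\tau,h}$ need not be monotone in $t$ within a step, so the monotone-function argument should be applied to the piecewise \emph{constant} energy $t\mapsto\|u_h^n\|_{L^2}^2$ (which is genuinely nonincreasing and differs from the linear interpolant by $o(1)$ in $L^2(0,S;L^2)$); alternatively one can simply cite the paper's strong convergence in $C([T_1,T];\dot L^2)$ for $T_1=S/2$. Net effect: your route proves the same lemma with less machinery at the $t\to0$ end, at no extra cost elsewhere.
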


\begin{proof}
Let $u_{\tau,h}(t)$ be a piecewise linear function in time, defined by 
$$
u_{\tau,h}(t)=\frac{t_n-t}{\tau_n}u_h^{n-1}+\frac{t-t_{n-1}}{\tau_n}u_h^{n} \quad\mbox{for}\,\,\, t\in (t_{n-1},t_n] .
$$
We claim that 
\begin{align}
&\mbox{$u_{\tau,h}$ converges to the unique weak solution $u$ weakly in $L^2(0,T;\dot H^1)$}; 
\label{converg-utauh-0} \\
&\mbox{$u_{\tau,h}$ converges to $u$ strongly in $C([T_1,T];\dot L^2) $ for any fixed $T_1\in(0,T)$}; \label{converg-utauh-1} \\
&\mbox{$u_{\tau,h}$ converges to $u$ strongly in $C([0,T];\dot H^{-1})$} . 
\label{converg-utauh-2} 
\end{align}
The proof of \eqref{converg-utauh-0}--\eqref{converg-utauh-2} is presented in Appendix \ref{section:AppendixB}. 

In addition to \eqref{converg-utauh-0}--\eqref{converg-utauh-2}, we claim that the following result holds: 
\begin{align}\label{converg-CL2}
\max_{t\in[0,T]} \Big| \|u_{\tau,h}(\cdot,t)\|_{L^2} - \|u(\cdot,t)\|_{L^2} \Big| 
\rightarrow 0\,\,\,\mbox{as $\tau,h\rightarrow 0$} .
%
\end{align}
We prove \eqref{converg-CL2} by using the method of contradiction.
If \eqref{converg-CL2} does not hold then \eqref{converg-utauh-1} implies that there exists a sequence $t_j\rightarrow 0$ and $\tau_j,h_j\rightarrow 0$ such that 
\begin{align*}  
\Big| \|u_{\tau_j,h_j}(\cdot,t_j)\|_{L^2} - \|u(\cdot,t_j)\|_{L^2}  \Big| \ge \delta \,\,\, \mbox{for}\,\,\, j\ge 1. 
\end{align*}
Since  $u\in C([0,T];L^2)$ it follows that $\|u(\cdot,t_j)\|_{L^2}\rightarrow \|u^0\|_{L^2}$ and therefore 
\begin{align*}  
\Big| \|u_{\tau_j,h_j}(\cdot,t_j)\|_{L^2} - \|u^0\|_{L^2}  \Big| \ge \frac{\delta}{2} \,\,\, \mbox{for sufficiently large $j$} . 
\end{align*}
The energy inequality \eqref{FD-basic-energy} implies that $\|u_{\tau_j,h_j}(\cdot,t_j)\|_{L^2}\le \|u^0_{h_j}\|_{L^2} = \|P_{X_{h_j}}u^0\|_{L^2} \le \|u^0\|_{L^2}$, and therefore 
\begin{align} \label{utauhL2-u0L2}
\|u_{\tau_j,h_j}(\cdot,t_j)\|_{L^2} - \|u^0\|_{L^2}  \le -\frac{\delta}{2} \,\,\, \mbox{for sufficiently large $j$} . 
\end{align}
Since $u_{\tau_j,h_j}(\cdot,t_j)$ converges to $u^0$ in $\dot H^{-1}$ (as a result of  \eqref{converg-utauh-2}), and $\|u_{\tau_j,h_j}(\cdot,t_j)\|_{L^2}$ is uniformly bounded as $j\rightarrow\infty$, it follows that $u_{\tau_j,h_j}(\cdot,t_j)$ also converges to $u^0$ weakly in $L^2$ and therefore 
\begin{align} \label{utauhL2-u0L2-2}
\|u^0\|_{L^2} \le
\liminf_{j\rightarrow \infty} \|u_{\tau_j,h_j}(\cdot,t_j)\|_{L^2} . 
\end{align}
Substituting this into \eqref{utauhL2-u0L2} yields that 
$$
\|u^0\|_{L^2} \le \|u^0\|_{L^2} -\frac{\delta}{2} .
$$ 
The contradiction implies that \eqref{converg-CL2} holds. 

We use the standard energy equality for the numerical solution: 
\begin{align}\label{FD-energy-equality}
\frac{\|u_h^n\|_{L^2}^2-\|u_h^{n-1}\|_{L^2}^2}{2\tau_n}
+\frac{\tau_n}{2}\bigg\|\frac{u_h^n-u_h^{n-1}}{\tau_n}\bigg\|_{L^2}^2
+\|\nabla u_h^n\|_{L^2}^2
=0 , 
\end{align}
which can be obtained through testing \eqref{semi-Euler} by $u_h^n$. 
By summing up \eqref{FD-energy-equality} for $n=1,\dots,m$, we have 
\begin{align}
\sum_{n=1}^m\tau_n \|\nabla u_h^n\|_{L^2}^2
&\le \frac12\|u_h^0\|_{L^2}^2 - \frac12\|u_h^m\|_{L^2}^2 \notag \\
&=\frac12\|u_h^0\|_{L^2}^2 - \frac12\|u(t_m)\|_{L^2}^2
+\frac12(\|u(t_m)\|_{L^2}^2-\|u_h^m\|_{L^2}^2) \notag \\
&\le\frac12\|u^0\|_{L^2}^2 - \frac12\|u(t_m)\|_{L^2}^2
+\frac12(\|u(t_m)\|_{L^2}^2-\|u_h^m\|_{L^2}^2) \notag \\
&=\int_{0}^{t_m} \|\nabla u(t)\|_{L^2}^2 \d t 
+\frac12(\|u(t_m)\|_{L^2}^2-\|u_h^m\|_{L^2}^2) .
\end{align}
First, \eqref{converg-CL2} implies that there exist constants $\tau_\varepsilon$ and $h_\varepsilon$ such that when $\tau\le \tau_\varepsilon$ and $h\le h_\varepsilon$ the following inequality holds: 
$$\|u(t_m)\|_{L^2}^2-\|u_h^m\|_{L^2}^2<\varepsilon . $$ 
Second, $u\in L^2(0,T;H^1(\Omega)^2)$ implies that there exists a constant $T_\varepsilon$ such that 
$$\int_{0}^{T_\varepsilon} \|\nabla u(t)\|_{L^2}^2 \d t <\frac\varepsilon2 . 
$$ 
As a result, we have 
\begin{align}
\sum_{n=1}^m\tau_n \|\nabla u_h^n\|_{L^2}^2
&\le \int_{0}^{t_m} \|\nabla u(t)\|_{L^2}^2 \d t 
+\frac12(\|u(t_m)\|_{L^2}^2-\|u_h^m\|_{L^2}^2) 
<\varepsilon .
\end{align}
This proves the desired result of Lemma \ref{Lemma:FD-small}. 
\hfill\end{proof}

\begin{lemma}\label{Lemma:FEM-small}
Let $u^0\in \dot L^2$ be given. Then 
$ \sum\limits_{n=1}^N \tau_n \|\nabla u(t_n)\|_{L^2}^2 \le C. $
Moreover, for any $\varepsilon>0$ there exist positive constants $T_\varepsilon$, $h_\varepsilon$ and $\tau_\varepsilon$ such that for $h\le h_\varepsilon$ and $\tau\le \tau_\varepsilon$ there holds 
$$
\sum_{n=1}^m \tau_n \|\nabla u(t_n)\|_{L^2}^2 \le \varepsilon
\quad
\forall\,  t_m \in(0,T_\varepsilon] . 
$$
The constants $T_\varepsilon$, $h_\varepsilon$ and $\tau_\varepsilon$  may depend on $u^0$ but are independent of $\tau$ and $h$. 
\end{lemma}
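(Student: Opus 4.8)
The plan is to reduce both assertions to a single weighted energy estimate for the continuous solution and then to compare the right--endpoint Riemann sum with an ordinary integral. Throughout write $g(s)=\|\nabla u(s)\|_{L^2}^2$ and $G(t)=\int_0^t g(s)\,\d s$; the energy equality \eqref{Eq:u_energy_eq-L2} gives $G(T)\le\tfrac12\|u^0\|_{L^2}^2$, and since $g\in L^1(0,T)$ one has $G(t)\to 0$ as $t\to0$. The central claim I would prove is
\[
t\,g(t)+\int_0^t s\,\|Au(s)\|_{L^2}^2\,\d s \le C\,G(t)\qquad\forall\,t\in(0,T].
\]
Everything then follows: taking $t_m=T$ gives the first (uniform) bound, and the smallness for $t_m\le T_\varepsilon$ comes from $G(t_m)\to0$, with $h_\varepsilon,\tau_\varepsilon$ arbitrary since the left-hand side does not involve $h$ and the estimate holds for every $\tau$.

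To establish the central claim I would differentiate $\tfrac12 s\,g(s)$ in $s$ and use $(\nabla u,\nabla\partial_t u)=-(Au,\partial_t u)$ (from \eqref{def-Stokes-A}) together with $\partial_t u=Au-P_X(u\cdot\nabla u)$ (equation \eqref{pde_abstract}), which yields $\tfrac{\d}{\d s}(\tfrac12 s\,g)=\tfrac12 g-s\|Au\|_{L^2}^2+s\,(Au,P_X(u\cdot\nabla u))$. The nonlinear term is controlled by the two--dimensional interpolation inequalities and the Stokes $H^2$-regularity $\|u\|_{H^2}\le C\|Au\|_{L^2}$: by \eqref{L4-L2-H1} and \eqref{W14-H1-H2} one has $\|P_X(u\cdot\nabla u)\|_{L^2}\le\|u\|_{L^4}\|\nabla u\|_{L^4}\le C\|u\|_{L^2}^{1/2}\|\nabla u\|_{L^2}\|Au\|_{L^2}^{1/2}$, so that $s|(Au,P_X(u\cdot\nabla u))|\le\epsilon\,s\|Au\|_{L^2}^2+C_\epsilon\,s\|\nabla u\|_{L^2}^4$. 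Choosing $\epsilon$ small, using $\|u\|_{L^2}\le\|u^0\|_{L^2}$ and the bound $s\|\nabla u\|_{L^2}^2\le C$ from Lemma \ref{lemma:regularity-u} (whence $\int s\|\nabla u\|_{L^2}^4\le C\int\|\nabla u\|_{L^2}^2$), I would integrate over $[\delta,t]$ to obtain $t\,g(t)+\int_\delta^t s\|Au\|_{L^2}^2\,\d s\le \delta\,g(\delta)+C\,G(t)$. Letting $\delta\to0$ removes the boundary term: $\liminf_{\delta\to0}\delta\,g(\delta)=0$ (otherwise $g(s)\ge c/s$ near $0$ contradicts $g\in L^1$), and since the integral terms converge the limit must vanish. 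A by-product is $\int_0^t s\|Au\|_{L^2}^2\,\d s\le C\,G(t)<\infty$.

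With the claim in hand, the Riemann sum is estimated node by node. For the first node, $\tau_1 g(t_1)=t_1 g(t_1)\le C\,G(t_1)\le C\,G(t_m)$. For $n\ge2$ I would write $\tau_n g(t_n)=\int_{t_{n-1}}^{t_n}g(t_n)\,\d s\le\int_{t_{n-1}}^{t_n}g(s)\,\d s+\tau_n\int_{t_{n-1}}^{t_n}|g'(r)|\,\d r$, where the same interpolation bound gives $|g'|\le C(\|Au\|_{L^2}^2+\|\nabla u\|_{L^2}^4)$. Summing over $n$, the leading parts telescope to $\int_{t_1}^{t_m}g\le G(t_m)$, and the corrections collapse by using $\tau_n\le C\tau r^{\alpha}$ for $r\in[t_{n-1},t_n]$ (from \eqref{stepsize}) and $r^{\alpha-1}\le t_1^{\alpha-1}\le C\tau^{-1}$ for $r\ge t_1$ (recall $t_1=\tau_1\sim\tau^{1/(1-\alpha)}$): one finds $\tau\int_{t_1}^{t_m}r^{\alpha}\|Au\|_{L^2}^2\,\d r\le C\int_0^{t_m}r\|Au\|_{L^2}^2\,\d r\le C\,G(t_m)$ and $\tau\int_{t_1}^{t_m}r^{\alpha}\|\nabla u\|_{L^2}^4\,\d r\le C\,G(t_m)$. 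Altogether
\[
\sum_{n=1}^m\tau_n\|\nabla u(t_n)\|_{L^2}^2\le C\,G(t_m)=C\int_0^{t_m}\|\nabla u(s)\|_{L^2}^2\,\d s,
\]
which proves both parts of the lemma.

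The main obstacle is precisely the near--origin contribution, concentrated at the first nodes $t\approx t_1$: since $\|\nabla u(t)\|_{L^2}^2$ is only known to blow up like $t^{-1}$, a crude comparison of the right--endpoint Riemann sum with $\int\|\nabla u\|_{L^2}^2$ produces an $O(1)$ error that cannot be made small. The weighted bound $t\,g(t)\le C\,G(t)$ (equivalently $t\|\nabla u(t)\|_{L^2}^2\to0$) is exactly what defeats this difficulty; its proof hinges on absorbing the nonlinear term $s(Au,P_X(u\cdot\nabla u))$ into $s\|Au\|_{L^2}^2$, which succeeds only because we are in two space dimensions, and on the limiting argument that removes the singular boundary term $\delta\,g(\delta)$.
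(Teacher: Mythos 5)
Your proof is correct, but it takes a genuinely different route from the paper's. The paper first replaces $\|\nabla u(t_n)\|_{L^2}^2$ by the average of $\|\nabla u\|_{L^2}^2$ over the half-interval $[t_{n-\frac12},t_n]$ plus a correction involving $\nabla\partial_t u$, and reduces the lemma (using only $\tau_n\le 2t$ there) to the weighted bound $\|t\,\partial_t u\|_{L^2(0,s;H^1)}^2\le C\|u\|_{L^2(0,s;H^1)}^2$, proved by testing the time-differentiated equation with $t^2\partial_t u$ and the original equation with $t\,\partial_t u$, then applying Gronwall's inequality. You instead establish the pointwise weighted estimate $t\|\nabla u(t)\|_{L^2}^2+\int_0^t s\|Au\|_{L^2}^2\,\d s\le C\int_0^t\|\nabla u\|_{L^2}^2\,\d s$ --- in effect testing the equation with $-sAu$ rather than with $s\,\partial_t u$ --- and then compare the Riemann sum with the integral via the fundamental theorem of calculus for $g=\|\nabla u\|_{L^2}^2$, using $|g'|\le C(\|Au\|_{L^2}^2+\|\nabla u\|_{L^2}^4)$ together with the graded-mesh property $\tau_n\le C\tau r^{\alpha}\le Cr$ for $r\in[t_{n-1},t_n]$, $n\ge 2$. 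Both routes rest on the same two-dimensional absorption of the nonlinear term and on $t\|\nabla u(t)\|_{L^2}^2\le C$ from Lemma \ref{lemma:regularity-u}; your removal of the boundary term via $\liminf_{\delta\to 0}\delta g(\delta)=0$ is a clean way to handle the origin (the paper uses temporal cut-off functions for the analogous issue), and your by-product $\int_0^t s\|Au\|_{L^2}^2\,\d s\le C\int_0^t\|\nabla u\|_{L^2}^2\,\d s$ is the maximal-regularity counterpart of the paper's control of $\int_0^t s^2\|\nabla\partial_t u\|_{L^2}^2\,\d s$. The one trade-off is that the paper's half-interval averaging works for an arbitrary partition, whereas your correction term genuinely invokes the stepsize structure \eqref{stepsize}; since \eqref{stepsize} is assumed throughout, this costs nothing here.
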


\begin{proof}
By using the triangle inequality we have 
\begin{align}\label{discr-L2H1-hat-uhn}
\sum_{n=1}^m \tau_n \|\nabla u(t_n)\|_{L^2}^2
\le& 
C\sum_{n=1}^m \tau_n \bigg\| \frac{2}{\tau_{n}}\int_{t_{n-\frac12}}^{t_{n}}  \nabla (u(t_n)-u(t))\d t \bigg\|_{L^2}^2 + 
C\sum_{n=1}^m \tau_n \bigg\| \frac{2}{\tau_{n}}\int_{t_{n-\frac12}}^{t_{n}} \nabla u(t)\d t \bigg\|_{L^2}^2 \notag \\
\le& 
C\sum_{n=1}^m \tau_n \bigg\| \frac{2}{\tau_{n}}\int_{t_{n-\frac12}}^{t_{n}}   \int_t^{t_n} \nabla\partial_tu(s) \d s \d t \bigg\|_{L^2}^2 + 
C\sum_{n=1}^m  \int_{t_{n-\frac12}}^{t_{n}}  \|\nabla u(t)\|_{L^2}^2 \d t \notag \\
\le& 
C\sum_{n=1}^m \tau_n^2  \int_{t_{n-\frac12}}^{t_{n}}  \|\nabla\partial_tu(t)\|_{L^2}^2 \d t  + 
C \int_0^{t_m} \|\nabla u(t)\|_{L^2}^2 \d t \notag \\
\le& 
C \|t\,\partial_tu(t)\|_{L^2(0, t_m;H^1)}^2 + 
C\|u\|_{L^2(0, t_m;H^1)}^2. 
\end{align}
where we have used $\tau_n\le 2t$ for $t\in[t_{n-\frac12},t_n]$. 
From \eqref{Eq:u_energy_eq-L2} we see that $u \in L^2(0,T;H^1(\Omega)^2)$, which implies that 
\begin{align}\label{semi-L2H1-uhn-small}
\|u\|_{L^2(0,t_m;H^1)}^2 
<\varepsilon \quad\mbox{when}\,\,\, t_m\,\,\,\mbox{is smaller than some constant $T_\varepsilon$} .  
\end{align}
In view of \eqref{discr-L2H1-hat-uhn}, it suffices to prove the following result  
\begin{align}\label{L2H1-tdt-uh}
\|t\,\partial_tu \|_{L^2(0,s;H^1)}^2 \le C\|u\|_{L^2(0,s;H^1)}^2 . 
\end{align}
Then substituting \eqref{semi-L2H1-uhn-small}--\eqref{L2H1-tdt-uh} into \eqref{discr-L2H1-hat-uhn} yields the desired result of Lemma \ref{Lemma:FEM-small}. 

In order to prove \eqref{L2H1-tdt-uh}, we differentiate \eqref{pde_abstract} and consider the equation of $\partial_tu$, i.e., 
\begin{align}\label{AFEM-dt}
\partial_t^2u-A\partial_tu&=-P_X(\partial_tu\cdot\nabla u)-P_X(u\cdot\nabla \partial_tu)\quad\mbox{for}\,\,\, t\in(0,T] . 
\end{align}
Testing \eqref{AFEM-dt} by $t^2 \partial_tu$, we have 
\begin{align*} 
\frac12 t^2 \frac{\d}{\d t} \|\partial_t u\|_{L^2}^2 
+ t^2\|\nabla\partial_tu\|_{L^2}^2 
= & -(\partial_tu\cdot\nabla u, t^2\partial_t u) \\
= &\, t^2( u, \partial_tu\cdot\nabla\partial_t u) \quad\mbox{(integration by parts)} \\
\le & Ct^2\|u\|_{L^4}^2 \|\partial_tu\|_{L^4}^2 + \frac14 t^2\|\nabla\partial_tu\|_{L^2}^2 \\
\le & Ct^2\|u\|_{L^2}\|u\|_{H^1} \|\partial_tu\|_{L^2}\|\nabla \partial_tu\|_{L^2} + \frac14 t^2\|\nabla\partial_tu\|_{L^2}^2 \\
\le & \frac12 C_*\|\nabla u\|_{L^2}^2 t^2 \|\partial_tu\|_{L^2}^2 + \frac12 t^2\|\nabla\partial_tu\|_{L^2}^2,
\end{align*}
where we have used $\|u\|_{L^2}\le C$ in deriving the last inequality, as shown in \eqref{Eq:u_energy_eq-L2}. 
Since the second term on the right-hand side of the inequality above can be absorbed by its left-hand side, it follows from $t^2 \frac{\d}{\d t}\|\partial_t u\|_{L^2}^2=\frac{\d}{\d t} (t^2\| \partial_t u\|_{L^2}^2) - 2t\|\partial_t u\|_{L^2}^2$ that 
\begin{align}\label{dt-t2-dt-uh-L2}
\bigg(\frac{\d}{\d t} t^2\|\partial_tu\|_{L^2}^2\bigg) +t^2\|\nabla \partial_tu\|_{L^2}^2 
\le
C_*\|\nabla u\|_{L^2}^2 t^2\|\partial_tu\|_{L^2}^2 + 2C_*t\|\partial_tu\|_{L^2}^2 . 
\end{align}
Hence, it remains to estimate  $\int_0^{t_m} t\|\partial_tu\|_{L^2}^2 \d t$ (the last term of the inequality above). To this end, we test \eqref{pde_abstract} by $t \partial_t u$ and use \eqref{Eq:ut_energy_eq}. This yields that 
\begin{align*}
\frac{1}{2}t \frac{\d}{\d t}\|\nabla u\|_{L^2}^2
+ t\|\partial_tu\|_{L^2}^2 
=&-(u\cdot\nabla u, t \partial_t u)\\
\le&Ct^2\| u\|_{L^4}^2\|\partial_tu\|_{L^4}^2 
+\frac12\|\nabla u\|_{L^2}^2 \\
\le&Ct\| u\|_{L^2}\|\nabla u\|_{L^2}t\|\partial_tu\|_{L^2} \|\nabla \partial_tu\|_{L^2} 
+\frac12\|\nabla u\|_{L^2}^2 \\
\le&Ct\|\nabla u\|_{L^2} \|\nabla \partial_tu\|_{L^2} 
+\frac12\|\nabla u\|_{L^2}^2 
\quad\mbox{(here \eqref{Eq:ut_energy_eq} is used)} \\
\le&C\sigma^{-1} \|\nabla u\|_{L^2}^2 +\sigma t^2\|\nabla\partial_tu\|_{L^2}^2 
+\frac12\|\nabla u\|_{L^2}^2 ,
\end{align*}
where $\sigma\in(0,1)$ is a constant arising from Young's inequality and therefore can be arbitrarily small. By using the identity $\frac{t}{2} \frac{\d}{\d t}\|\nabla u\|_{L^2}^2=\frac{\d}{\d t} \bigl(\frac{t}{2}\|\nabla u\|_{L^2}^2\bigr) - \frac{1}{2}\|\nabla u\|_{L^2}^2$ we furthermore derive that 
\begin{align}\label{dt-t-grad-uh-L2}
t\|\partial_tu\|_{L^2}^2 + \frac{\d}{\d t}\bigg(\frac{t}{2} \|\nabla u\|_{L^2}^2\bigg)
\le
(C\sigma^{-1}+1)\|\nabla u\|_{L^2}^2 +\sigma t^2\|\nabla\partial_tu\|_{L^2}^2  . 
\end{align}

Combining the two estimates above, i.e., $2C_*\times$\eqref{dt-t-grad-uh-L2}$+$\eqref{dt-t2-dt-uh-L2}, we have 
\begin{align}
&\frac{\d}{\d t}\bigg( t^2\|\partial_tu\|_{L^2}^2
+ C_*t \|\nabla u\|_{L^2}^2\bigg) + t^2\|\nabla \partial_tu\|_{L^2}^2 
\notag \\
&\le
C_*\|\nabla u\|_{L^2}^2 t^2\|\partial_tu\|_{L^2}^2 
+(2C_*C\sigma^{-1} +2C_*)\|\nabla u\|_{L^2}^2 +2C_*\sigma t^2\|\nabla\partial_tu\|_{L^2}^2 .
\end{align}
By choosing $\sigma$ sufficiently small, the last term on the right-hand side can be absorbed by the left-hand side. Then we obtain  
\begin{align}
\frac{\d}{\d t} \big(t^2\|\partial_tu\|_{L^2}^2
+ C_*t \|\nabla u\|_{L^2}^2\big) + t^2\|\nabla \partial_tu\|_{L^2}^2 
\le
C\|\nabla u\|_{L^2}^2 t^2\|\partial_tu\|_{L^2}^2
+ C \|\nabla u\|_{L^2}^2 . 
\end{align}
By applying Gronwall's inequality  and using \eqref{Eq:u_energy_eq-L2} we obtain 
\begin{align}
&\max_{t\in[0,s]} \big(t^2\|\partial_tu\|_{L^2}^2
+ C_*t \|\nabla u\|_{L^2}^2\big) + \int_0^st^2\|\nabla \partial_tu\|_{L^2}^2 \d t \notag\\
&\le
\exp\Big(\int_0^s C\|\nabla u\|_{L^2}^2\d t\Big) \int_0^s C\|\nabla u\|_{L^2}^2 \d t \notag\\
&\le
C\int_0^s \|\nabla u\|_{L^2}^2 \d t  . 
\end{align}
This proves \eqref{L2H1-tdt-uh} and completes the proof of Lemma \ref{Lemma:FEM-small}. 
\hfill\end{proof}

\section{Numerical experiments}
In this section we present numerical examples to support the theoretical result in Theorem \ref{THM:FEM-Euler}. Both examples concern the incompressible NS problem
\begin{equation}
\left \{
\begin{aligned} 
\partial_t u + u\cdot\nabla u - \mu \varDelta u
+\nabla p &= 0 && \mbox{in}\,\,\,  \varOmega\times (0,T] ,\\
\nabla\cdot u&=0&&\mbox{in}\,\,\,  \varOmega\times (0,T] ,\\
u&=0 && \mbox{on}\,\,\, \partial\varOmega\times (0,T] ,\\
u&=u^0 && \mbox{at}\,\,\, \varOmega\times \{0\} , 
\end{aligned}
\right .
\end{equation}
in the unit square $\Omega=(0, 1)\times(0,1)$ with $T=0.1$ and $\mu = 0.05$. 
The Scott--Vogelius $(P_4, P_{3}^{-1})$ finite elements are used for spatial discretization; see \cite{scott-1985-norm}. This finite element space has the required properties (P1)--(P2) mentioned in Section \ref{sec:main_results}. 
All the computations are performed using the software package FEniCS (\url{https://fenicsproject.org}). 


\medskip

\begin{example}\label{example2}
{\upshape 
Let $w = \sin(\pi x)^{\epsilon+0.5} \sin(\pi y)^{\epsilon+0.5}$ with $\epsilon = 0.01$, and consider the initial value 
\begin{align*}
u^0=(u_{1}^0(x,y),u_{2}^0(x,y)) := (w_y, -w_x) , 
\end{align*}
which satisfies that
$$
u^0\in \dot L^2 \quad \mbox{but}\quad u^0\notin H^{\epsilon}(\Omega)^2 .
$$
We solve problem \eqref{pde} by the proposed method \eqref{fully-FEM-Euler} and compare the numerical solutions with the reference solution given by sufficiently small stepsize and mesh size.

}
\end{example}

The time discretization errors $\|u_h^N - u_{h,\rm ref}^N\|_{L^2(\Omega)}$ are presented in Table \ref{table_time_errors-2}, where the reference solution $u_{h,\rm ref}^N$ is chosen to be the numerical solution with maximal stepsize $\tau_{\rm ref}=1/1280$. We have used four sufficiently small spatial mesh sizes $h=2^{-4-j}, j = 0, 1, 2, 3$, to investigate the influence of spatial discretization on the temporal discretization errors $\|u_h^N - u_{h,\rm ref}^N\|_{L^2(\Omega)}$. From Table \ref{table_time_errors-2} we can see that the influence of spatial discretization is negligibly small in observing the first-order convergence in time, which is consistent with the result proved in Theorem \ref{THM:FEM-Euler}. 
\begin{table}[htp]\centering\small
\caption{Example \ref{example2}: Time discretization errors using variable stepsize with $\alpha=0.55$.}
\centering
\setlength{\tabcolsep}{4.0mm}{
\begin{tabular}{cccccc}
\toprule
$\tau$           &&         $h=1/16$&         $h=1/32$&         $h=1/64$&        $h=1/128$ \\[2pt]
\midrule
$1/40$           &&      3.8127E--02&      3.7780E--02&      3.7664E--02&      3.7624E--02 \\[2pt]
$1/80$           &&      1.5696E--02&      1.5545E--02&      1.5493E--02&      1.5475E--02 \\[2pt]
$1/160$          &&      7.6949E--03&      7.6225E--03&      7.5968E--03&      7.5879E--03 \\[2pt]
\midrule
Convergence rate && $O(\tau^{1.03})$& $O(\tau^{1.03})$& $O(\tau^{1.03})$& $O(\tau^{1.03})$ \\[2pt]
\bottomrule
\end{tabular}}
\label{table_time_errors-2}
\end{table}

The spatial discretization errors $\|u_h^N - u_{h,\rm ref}^N\|_{L^2(\Omega)}$ are presented in Table \ref{table_space_errors-2}, where the reference solution $u_{h,\rm ref}^N$ is chosen to be the numerical solution with mesh size $h_{\rm ref}=1/128$. We have chosen several sufficiently small time stepsizes $\tau = 2^{-3-j}/10, j = 0, 1, 2, 3$ to investigate the influence of temporal discretization on the spatial discretization errors $\|u_h^N - u_{h,\rm ref}^N\|_{L^2(\Omega)}$.  
From Table \ref{table_space_errors-2} we see that the influence of temporal discretization can be neglected compared with the spatial discretization errors, which are $O(h^{1.5})$ in the $L^2$ norm. This is half-order better than the result proved in Theorem \ref{THM:FEM-Euler}. The rigorous proof of this sharper convergence rate for $ L^2$ initial data still remains open. 

\begin{table}[htb]\centering\small
\caption{Example \ref{example2}: Spatial discretization errors using variable stepsize with $\alpha=0.55$. }
\centering
\setlength{\tabcolsep}{4.0mm}{
\begin{tabular}{cccccc}
\toprule
$h$              &&   $\tau=1/80$&  $\tau=1/160$&  $\tau=1/320$&  $\tau=1/640$ \\[2pt]
\midrule
$1/8$            &&   5.0365E--03&   4.7074E--03&   4.5093E--03&   4.4308E--03 \\[2pt]
$1/16$           &&   1.6844E--03&   1.5711E--03&   1.5019E--03&   1.4744E--03 \\[2pt]
$1/32$           &&   5.4146E--04&   5.0663E--04&   4.8451E--04&   4.7546E--04 \\[2pt]
\midrule
Convergence rate && $O(h^{1.64})$& $O(h^{1.63})$& $O(h^{1.63})$& $O(h^{1.63})$ \\[2pt]
\bottomrule
\end{tabular}}
\label{table_space_errors-2}
\end{table}

\medskip

\begin{example}\label{example1}
{\upshape 
In the second example, we consider an initial value $u^0 = P_X w$ with 
$$
w = (w_{1}(x,y),w_{2}(x,y)) = (y^{\epsilon - 0.5}, x^{\epsilon - 0.5}) \quad\mbox{with}\,\,\, \epsilon = 0.01 ,
$$ 
which is a function in $L^2(\Omega)^2$ but not in $H^{\epsilon}(\Omega)^2$. 
Since $P_X$ is the $L^2$-orthogonal projection from $L^2(\varOmega)^2$ onto $\dot L^2$, it follows that $u^0\in \dot L^2$. But the analytical expression of $u^0$ is unknown. 
We solve problem \eqref{pde} by the proposed method \eqref{fully-FEM-Euler} with $u_h^0=P_{X_h}u^0=P_{X_h}w$, which can be computed from \eqref{FEM-u0h} with $u^0$ replaced by $w$ therein. Then we compare the numerical solutions with a reference solution given by sufficiently small mesh size.

}
\end{example}

The temporal discretization errors $\|u_h^N - u_{h,\rm ref}^N\|_{L^2(\Omega)}$ are presented in Table \ref{table_time_errors}, where the reference solution $u_{h,\rm ref}^N$ is chosen to be the numerical solution with maximal stepsize $\tau_{\rm ref}=1/1280$, and we have used several sufficiently small spatial mesh sizes $h=2^{-4-j}, j = 0, 1, 2, 3$ to investigate the spatial discretization errors and to guarantees that the influence of spatial discretization error is negligibly small in observing the temporal convergence rates. 
From Table \ref{table_time_errors} we see that the temporal discretization errors are about $O(\tau)$, which is consistent with the result proved in Theorem \ref{THM:FEM-Euler}. 
\begin{table}[htp]\centering\small
\caption{Example \ref{example1}: Time discretization errors using variable stepsize with $\alpha=0.55$. }
\centering
\setlength{\tabcolsep}{4.0mm}{
\begin{tabular}{cccccc}
\toprule
$\tau$           &&         $h=1/16$&         $h=1/32$&         $h=1/64$&        $h=1/128$ \\[2pt]
\midrule
$1/40$           &&      4.3461E--03&      4.0663E--03&      3.9460E--03&      3.9014E--03 \\[2pt]
$1/80$           &&      1.8754E--03&      1.7471E--03&      1.6919E--03&      1.6711E--03 \\[2pt]
$1/160$          &&      9.1079E--04&      8.4811E--04&      8.1938E--04&      8.0861E--04 \\[2pt]
\midrule
Convergence rate && $O(\tau^{1.04})$& $O(\tau^{1.04})$& $O(\tau^{1.05})$& $O(\tau^{1.05})$ \\[2pt]
\bottomrule
\end{tabular}}
\label{table_time_errors}
\end{table}

The spatial discretization errors $\|u_h^N - u_{h,\rm ref}^N\|_{L^2(\Omega)}$ are presented in Table \ref{table_space_errors}, where the reference solution $u_{h,\rm ref}^N$ is chosen to be the numerical solution with mesh size $h_{\rm ref}=1/128$. We have chosen several time stepsizes to investigate the influence of temporal discretization on the spatial discretization errors. From Table \ref{table_space_errors} we see that the influence of temporal discretization can be neglected compared with the spatial discretization errors, which are $O(h^{1.5})$ in the $L^2$ norm. This is better than the result proved in Theorem \ref{THM:FEM-Euler} (similarly as the results shown in the previous example). 
\begin{table}[htb]\centering\small
\caption{Example \ref{example1}: Spatial discretization errors using variable stepsize with $\alpha=0.55$. }
\centering
\setlength{\tabcolsep}{4.0mm}{
\begin{tabular}{cccccc}
\toprule
$h$              &&   $\tau=1/80$&  $\tau=1/160$&  $\tau=1/320$&  $\tau=1/640$ \\[2pt]
\midrule
$1/8$            &&   1.0833E--03&   8.6397E--04&   7.4118E--04&   6.8490E--04 \\[2pt]
$1/16$           &&   4.1932E--04&   3.1724E--04&   2.5928E--04&   2.3009E--04 \\[2pt]
$1/32$           &&   1.4837E--04&   1.1103E--04&   8.9531E--05&   7.8055E--05 \\[2pt]
\midrule
Convergence rate && $O(h^{1.50})$& $O(h^{1.51})$& $O(h^{1.53})$& $O(h^{1.56})$ \\[2pt]
\bottomrule
\end{tabular}}
\label{table_space_errors}
\end{table}

\section{Conclusions}

We have presented an error estimate for a fully discrete semi-implicit Euler finite element method for the NS equations with $L^2$ initial data based on the natural regularity of the solution with singularity at $t=0$. The numerical solution is proved to be at least first-order convergent in both time and space without any CFL condition. The analysis makes use of the smoothing property of the NS equations under $L^2$ initial data and appropriate duality arguments to obtain a discrete $L^2(0,T_*;L^2)$ error bound for a sufficiently small constant $T_*$ (which depends on the initial data $u^0$, but independent of $\tau$ and $h$). 
This is proved by utilizing Lemma \ref{Lemma:FD-small}, which says that the discrete $L^2(0,T_*;L^2)$ norm of the numerical solution is not only bounded but also small for sufficiently small $T_*$. 
The discrete error bound in $L^2(0,T_*;L^2)$ is furthermore improved to $L^2(0,T;L^2)$ (for a general $T>0$) and a pointwise-in-time $L^2$ error bound away from $t=0$. The extension of the analysis to the Taylor--Hood finite elements (which do not satisfy property (P2)) is also possible.

Several questions still remain open for the NS equations with nonsmooth initial data. 

First, the numerical results show that $1.5$th-order convergence is achieved in the space discretization. This is slightly better than the result proved in this article. The proof of this sharper convergence rate still remain open. 

Second, the current numerical method and its error analysis requires variable stepsize and yields an error bound which holds only away from $t=0$. The development of efficient numerical methods that may have some uniform temporal convergence up to $t=0$ is still challenging. In view of the low-regularity integrators recently developed for dispersive equations \cite{Hofmanovaa-Schratz-2017,Ostermann-Schratz-FoCM-2018,Rousset-Schratz-2020} and semilinear parabolic equations \cite{Li-Ma-JSC} this is possible and worth to be considered (at least for semi-discretization in time). 

Third, the development of numerical methods with higher-order convergence (e.g., away from $t=0$) for the NS equations with initial data in critical spaces is still challenging and worth to be studied. 

Fourth, the error analysis of numerical methods for the three-dimensional NS equations with critical initial data in $H^\frac12$ or $L^3$ still remains open.

\bibliographystyle{abbrv}
\bibliography{NS}

\newpage

\appendix 
\section{Proof of Lemma \ref{lemma:regularity-u}}
\label{section:AppendixA}

We only need to prove \eqref{Eq:ut_energy_eq} by assuming that the initial value is in $H^2(\Omega)^2$, provided that all the constant $C_m$ below depends only on $m$ and $\|u^0\|_{L^2}$ (independent of higher regularity of $u^0$). 
Then, for a nonsmooth initial value $u^0\in \dot L^2$, we can choose a sequence of functions $u^0_{n}\in \dot H^1_0\cap H^2(\Omega)^2$, $n=1,2,\dots$, converging to $u^0$ in $\dot L^2$. The solution $u_n$ corresponding to the smooth initial value $u^0_{n}$ satisfies 
\begin{align}
\label{Eq:ut_energy_eq_un}
&\|\partial_t^m u_n(t)\|_{L^2}  + t^{\frac12}\|\partial_t^m u_n(t)\|_{H^1} 
+ t\|\partial_t^m u_n(t)\|_{H^2} \le C_m t^{-m} \quad\forall\, t>0,\,\,\, m\ge 0, 
\end{align} 
with a constant $C_m$ depending only on $m$ and $\|u^0_{n}\|_{L^2}$ (thus independent of $n$). By a standard compactness argument and passing to the limit $n\rightarrow\infty$, one obtains that $u_n(t)$ converges to $u(t)$ for $t>0$ and therefore \eqref{Eq:ut_energy_eq_un} implies \eqref{Eq:ut_energy_eq}. 

It remains to prove \eqref{Eq:ut_energy_eq} for $H^2$ initial value (thus the solution is qualitatively smooth in time and $H^2$ in space; see \cite[Remark 3.7]{Temam-1977}). 

From \eqref{Eq:u_energy_eq-L2} we immediately obtain  
\begin{equation}
\label{Eq:u_bound_by_init}
\|u\|_{L^\infty(0,\infty;L^2)} + \|u\|_{L^2(0,\infty;H^1)}\le C.
\end{equation}
%
To obtain higher-order estimates, we fix an arbitrary $s>0$ and let $\chi(t)$ be a nonnegative smooth cut-off function of time (independent of $x$) satisfying that 
\begin{align}\label{def-chi-t}
\chi(t)
=\left\{
\begin{aligned}
&0 &&\mbox{for}\,\,\,t\in(0,s/4),\\
&1 &&\mbox{for}\,\,\,t\in[s/2,\infty) ,
\end{aligned}
\right.
\quad\mbox{and}\quad
|\partial_t^k\chi(t)| \le Cs^{-k}\,\,\, \mbox{for}\,\,\, k=0,1,2,\dots
\end{align}
Testing \eqref{pde} by $\chi^2 \partial_tu$ yields 
\begin{align}\label{chi-dtu-L2-1}
&\|\chi\partial_tu\|_{L^2}^2
+\frac12 \chi^2  \frac{\d}{\d t}\|\nabla u\|_{L^2}^2 \notag\\
&= 
-(u\cdot\nabla u,\chi^2\partial_tu) \notag\\
&\le C\epsilon^{-1}\|u\cdot\nabla(\chi u)\|_{L^2}^2
+
\frac {\epsilon}{2}\|\chi\partial_tu\|_{L^2}^2 \\
&\le
C\epsilon^{-1}\|u\|_{L^4}^2\|\nabla(\chi u)\|_{L^4}^2
+
\frac {\epsilon}{2} \|\chi\partial_tu\|_{L^2}^2 
\notag\\
&\le
C\epsilon^{-1}\|u\|_{L^2}\|u\|_{H^1}
\|\nabla(\chi u)\|_{L^2}\|\Delta (\chi u)\|_{L^2}
+
\frac {\epsilon}{2}\|\chi\partial_tu\|_{L^2}^2
\quad\mbox{(here \eqref{L4-L2-H1}--\eqref{W14-H1-H2} are used)} \notag\\
&\le
C\epsilon^{-3}\|u\|_{L^2}^2\|u\|_{H^1}^2 \|\nabla(\chi u)\|_{L^2}^2
+
\frac {\epsilon}{2} \|\Delta (\chi u)\|_{L^2}^2 
+
\frac {\epsilon}{2}\|\chi\partial_tu\|_{L^2}^2  ,
\notag
\end{align}
where $\epsilon$ is an arbitrary positive constant arising from using Young's inequality. 
Since 
$$\chi^2  \frac{\d}{\d t}\|\nabla u\|_{L^2}^2=\frac{\d}{\d t} \|\nabla(\chi u)\|_{L^2}^2- 2\chi\partial_t\chi\|\nabla u\|_{L^2}^2$$ and $|\partial_t\chi|\le Cs^{-1}$, it follows from \eqref{chi-dtu-L2-1} that 
\begin{align}\label{Est1}
&\|\chi\partial_tu\|_{L^2}^2
+\frac12 \frac{\d}{\d t} \|\nabla(\chi u)\|_{L^2}^2 \notag \\
&\le
C\epsilon^{-3}\|u\|_{L^2}^2\|u\|_{H^1}^2 \|\nabla(\chi u)\|_{L^2}^2
+
\frac {\epsilon}{2}\|\Delta (\chi u)\|_{L^2}^2 
+
\frac {\epsilon}{2}\|\chi\partial_tu\|_{L^2}^2
+Cs^{-1}\|\nabla u\|_{L^2}^2.
\end{align}

To estimate the term $\epsilon\|\Delta (\chi u)\|_{L^2}^2$ on the right-hand side of \eqref{Est1}, we multiply \eqref{pde} by $\chi$ and consider the resulting equation
\begin{align*}
\left\{
\begin{aligned}
-\varDelta (\chi u)+\nabla (\chi p)
&=-\chi \partial_tu - u\cdot\nabla(\chi u) &&\mbox{in}\,\,\,\varOmega ,\\
\nabla\cdot(\chi u)&=0 &&\mbox{in}\,\,\,\varOmega ,\\
\chi u&=0 &&\mbox{on}\,\,\,\partial\varOmega .
\end{aligned}
\right.
\end{align*}
Through the standard  $H^2$ estimate of the linear Stokes equation (cf. \cite[Theorem 2]{Kellogg-Osborn-1976}) we obtain
\begin{align}\label{Est2}
\|\chi u\|_{H^2}^2
&\le C\|-\chi \partial_tu - u\cdot\nabla(\chi u)\|_{L^2}^2  \nonumber \\
&\le C\|\chi \partial_tu\|_{L^2}^2 + C\|u\|_{L^4}^2\|\nabla(\chi u)\|_{L^4}^2  \nonumber \\
&\le C\|\chi \partial_tu\|_{L^2}^2 + C\|u\|_{L^2}\|u\|_{H^1}\|\nabla(\chi u)\|_{L^2}\|\Delta (\chi u)\|_{L^2}  \nonumber \\
&\le
C\|\chi \partial_tu\|_{L^2}^2 +
C\|u\|_{L^2}^2\|u\|_{H^1}^2 \|\nabla(\chi u)\|_{L^2}^2
+
\frac {1}{2}\|\Delta (\chi u)\|_{L^2}^2 , 
\end{align}
where we have estimated the term $\|u\|_{L^4}^2\|\nabla(\chi u)\|_{L^4}^2$ similarly as in \eqref{chi-dtu-L2-1}. The last term of \eqref{Est2} can be absorbed by the left-hand side. Then adding $\epsilon\times$\eqref{Est2} to \eqref{Est1} yields 
\begin{align}\label{Est3}
&\|\chi\partial_tu\|_{L^2}^2
+ \frac12\frac{\d}{\d t} \|\nabla(\chi u)\|_{L^2}^2 
+ \epsilon\|\chi u\|_{H^2}^2 \nonumber \\ 
&\le
C\epsilon^{-3}\|u\|_{L^2}^2\|u\|_{H^1}^2 \|\nabla(\chi u)\|_{L^2}^2
+
\frac {\epsilon}{2}\|\Delta (\chi u)\|_{L^2}^2 
+C\epsilon \|\chi \partial_tu\|_{L^2}^2
+Cs^{-1}\|\nabla u\|_{L^2}^2 
. 
\end{align}
By choosing sufficiently small $\epsilon$, the two terms involving $\|\Delta (\chi u)\|_{L^2}^2$ and $\|\chi \partial_tu\|_{L^2}^2$ can be absorbed by the left-hand side. Then, by using estimate $\|u\|_{L^\infty(0,\infty;L^2)}\le C$ from \eqref{Eq:u_bound_by_init}, we obtain
\begin{align}\label{Est4}
&\frac12\|\chi\partial_tu\|_{L^2}^2
+ \frac12\frac{\d}{\d t} \|\nabla(\chi u)\|_{L^2}^2 
+ \frac{1}{2} \|\chi u\|_{H^2}^2 \le C\|u\|_{H^1}^2\|\nabla(\chi u)\|_{L^2}^2+Cs^{-1}\|\nabla u\|_{L^2}^2 .
\end{align}
Now, applying Gronwall's inequality, we have 
\begin{align}
&\|\nabla(\chi u)\|_{L^\infty(0,s;L^2)}^2 
+\|\chi\partial_tu\|_{L^2(0,s;L^2)}^2 
+ \|\chi u\|_{L^2(0,s;H^2)}^2 \\
&\le
\operatorname{exp}\left(C\|u\|_{L^2(0,s;H^1)}^2\right) 
Cs^{-1}\|\nabla u\|_{L^2(0,s;L^2)}^2 \nonumber \\ 
&\le Cs^{-1} , \nonumber 
\end{align}
where we have used the estimate $\|u\|_{L^2(0,\infty;H^1)}\le C$ from \eqref{Eq:u_bound_by_init}. 
Since $s>0$ is arbitrary and $\chi(t)=1$ for $t\ge s/2$, choosing $s=t$ in the inequality above yields that 
\begin{align}\label{Est5}
&\|u\|_{L^\infty(t/2,t;H^1)} 
+\|\partial_t u\|_{L^2(t/2,t;L^2)} 
+ \|u\|_{ L^2(t/2,t;H^2)} 
\le
Ct^{-\frac12}, \quad \forall t>0.  
\end{align}

We consider the mathematical induction on $m$, assuming that 
\begin{align}\label{mathind-1}
\|u^{(j)}(t)\|_{L^2} +  t^{\frac12} \big(\|u^{(j)}(t)\|_{H^1} 
+ \|\partial_t u^{(j)}(t)\|_{L^2(t/2,t;L^2)}  + \|u^{(j)}(t)\|_{L^2(t/2,t;H^2)} \big) \le C t^{-j}, \\
\forall t>0,\,\,\, j=0,\dots,m-1, \notag 
\end{align}
which holds for $m=1$ in view of \eqref{Eq:u_bound_by_init} and \eqref{Est5}. 

We denote $u^{(m)}=\partial_t^mu$ and differentiate \eqref{pde} $m$ times. This yields  
\begin{equation}
\label{diff-pde}
\left \{
\begin{aligned} 
\partial_tu^{(m)} + 
\sum_{j=0}^m \left(\begin{subarray}{c}
\displaystyle m\\ 
\displaystyle j
\end{subarray}\right) u^{(j)}\cdot\nabla u^{(m-j)} -\varDelta u^{(m)}
+\nabla p^{(m)} &= 0 && \mbox{in}\,\,\,  \varOmega\times (0,T] ,\\
\nabla \cdot u^{(m)} &=0 && \mbox{in}\,\,\,  \varOmega\times (0,T] ,\\
u^{(m)} &=0 && \mbox{on}\,\,\, \partial\varOmega\times (0,T] .
\end{aligned}
\right .
\end{equation}
Testing this equation by $u^{(m)}$ yields
\begin{align}\label{pre-dtm-u}
\frac12 \frac{\d}{\d t} \|u^{(m)}\|_{L^2}^2 + \|\nabla u^{(m)}\|_{L^2}^2
&\le C\sum_{j=0}^m \|u^{(j)}\cdot\nabla u^{(m-j)}\|_{H^{-1}}\|u^{(m)}\|_{H^1_0} \notag  \\
&\le C\sum_{j=0}^m \|u^{(j)}\|_{L^4}^2 \|u^{(m-j)}\|_{L^4}^2
+\frac14 \|\nabla u^{(m)}\|_{L^2}^2 \notag \\
&\le C\sum_{j=0}^m \|u^{(j)}\|_{L^2}\|\nabla u^{(j)}\|_{L^2} \|u^{(m-j)}\|_{L^2}\|\nabla u^{(m-j)}\|_{L^2}
+\frac14 \|\nabla u^{(m)}\|_{L^2}^2,
\end{align}
where we have used the following fact to get the second to last inequality: 
\begin{align*}
\|u^{(j)}\cdot\nabla u^{(m-j)}\|_{H^{-1}} 
=& \sup_{v \in H^1_0,\, \|v\|_{H^1} = 1} (u^{(j)}\cdot\nabla u^{(m-j)}, v) \\
=&  \sup_{v \in H^1_0,\, \|v\|_{H^1} = 1} -( u^{(m-j)},u^{(j)}\cdot\nabla v)\\
\le& \|u^{(j)}\|_{L^4}\|u^{(m-j)}\|_{L^4}.  
\end{align*}
%
%
Substituting \eqref{mathind-1} into \eqref{pre-dtm-u} for $1\le j\le m-1$, we obtain 
\begin{align*}
\frac12 \frac{\d}{\d t} \|u^{(m)}\|_{L^2}^2 + \|\nabla u^{(m)}\|_{L^2}^2
&\le C\|u\|_{L^2}\|\nabla u\|_{L^2} \|u^{(m)}\|_{L^2}\|\nabla u^{(m)}\|_{L^2}
+Ct^{-2m-1} 
+\frac14 \|\nabla u^{(m)}\|_{L^2}^2 \\
&\le C\|u\|_{L^2}^2\|\nabla u\|_{L^2}^2 \|u^{(m)}\|_{L^2}^2
+Ct^{-2m-1} 
+\frac12 \|\nabla u^{(m)}\|_{L^2}^2. 
\end{align*}
Then, multiplying the inequality above by $\chi^2$ and using $\|u(t)\|_{L^2}\le C$, we have  %
\begin{align*}
\frac12 \frac{\d}{\d t} \|\chi u^{(m)}\|_{L^2}^2 + \frac12 \|\nabla(\chi u^{(m)})\|_{L^2}^2 
&\le C\|\nabla u\|_{L^2}^2 \|\chi u^{(m)}\|_{L^2}^2
+C\chi(t) t^{-2m-1}
+C|\partial_t\chi| \chi \|\partial_t u^{(m-1)}\|_{L^2}^2 .
\end{align*}
By using Gronwall's inequality, estimate \eqref{Eq:u_bound_by_init} and property \eqref{def-chi-t}, we derive that 
\begin{align*}
 \|\chi u^{(m)}\|_{L^\infty(0,s;L^2)}^2& + \|\nabla(\chi u^{(m)})\|_{L^2(0,s;L^2)}^2\\
& \le \exp(C\|\nabla u\|_{L^2(0,s;L^2)}^2) \int_0^{s}(\chi(t)^2 t^{-2m-1} +C|\partial_t\chi(t)| \chi(t)\|\partial_t u^{(m-1)}\|_{L^2}^2)\d t \\
& \le \exp(C\|\nabla u\|_{L^2(0,s;L^2)}^2) \int_{\frac{s}{4}}^{s}(t^{-2m-1} +Cs^{-1}\|\partial_t u^{(m-1)}\|_{L^2}^2)\d t \\
& \le Cs^{-2m} .
\end{align*}
As a result, choosing $s=t$ in the inequality above, we have 
\begin{align}\label{reg-est}
 \|u^{(m)}\|_{L^\infty(t/2,t;L^2)} + \|\nabla u^{(m)}\|_{L^2(t/2,t;L^2)}
\le Ct^{-m}  ,\quad\forall\, t>0 .
\end{align}
%

From \eqref{diff-pde} we know that 
\begin{equation}
\left\{
\begin{aligned}
-\varDelta u^{(m)} + \nabla p^{(m)} &= 
- \partial_tu^{(m)} - 
\sum_{j=0}^m \left(\begin{subarray}{c}
\displaystyle m\\ 
\displaystyle j
\end{subarray}\right) u^{(j)}\cdot \nabla u^{(m-j)}, &&\mbox{in}\,\,\,\varOmega , \\
\nabla\cdot u^{(m)}&=0 &&\mbox{in}\,\,\,\varOmega ,\\[5pt]
u^{(m)}&=0 &&\mbox{on}\,\,\,\partial\varOmega .
\end{aligned}
\right.
\end{equation}
Through the standard $H^2$ estimate of linear Stokes equations (cf. \cite[Theorem 2]{Kellogg-Osborn-1976}) we obtain
\begin{align}\label{H2-udtm-1}
\|u^{(m)}\|_{H^2}^2
\le& 
C \|\partial_tu^{(m)} \|_{L^2}^2 
+
C\sum_{j=0}^m\|u^{(j)}\cdot \nabla u^{(m-j)}\|_{L^2}^2.
\end{align}
Testing equation \eqref{diff-pde} by $\partial_t u^{(m)}$ gives
\begin{align}\label{L2-dtudtm-1}
\|\partial_t u^{(m)}\|_{L^2}^2 + \frac12 \frac{\d}{\d t}\|\nabla u^{(m)}\|_{L^2}^2 
&\le C\sum_{j=0}^m \|u^{(j)}\cdot\nabla u^{(m-j)}\|_{L^2}\|\partial_t u^{(m)}\|_{L^2} \notag \\
&\le C\epsilon^{-1}\sum_{j=0}^m \|u^{(j)}\cdot\nabla u^{(m-j)}\|_{L^2}^2
+\epsilon\|\partial_t u^{(m)}\|_{L^2}^2 .
\end{align}
Summing up \eqref{L2-dtudtm-1} and $\lambda\times$\eqref{H2-udtm-1} yields 
\begin{align*}
&\|\partial_t u^{(m)}\|_{L^2}^2 + \frac12 \frac{\d}{\d t}\|\nabla u^{(m)}\|_{L^2}^2 +\lambda \|\Delta u^{(m)}\|_{L^2}^2 \\
&\le C\epsilon^{-1}\sum_{j=0}^m \|u^{(j)}\cdot\nabla u^{(m-j)}\|_{L^2}^2
+ (\epsilon+C\lambda) \|\partial_t u^{(m)}\|_{L^2}^2 \\
&\le C\epsilon^{-1}\sum_{j=0}^m \|u^{(j)}\|_{L^4}^2 \|\nabla u^{(m-j)}\|_{L^4}^2
+ (\epsilon+C\lambda)  \|\partial_t u^{(m)}\|_{L^2}^2 \\
&\le C\epsilon^{-1}\sum_{j=0}^m \|u^{(j)}\|_{L^2}\|\nabla u^{(j)}\|_{L^2} \|\nabla u^{(m-j)}\|_{L^2}\|\Delta u^{(m-j)}\|_{L^2}
+ (\epsilon+C\lambda)  \|\partial_t u^{(m)}\|_{L^2}^2 ,
\end{align*}
where we have used \eqref{L4-L2-H1}--\eqref{W14-H1-H2} in deriving the last inequality. By choosing sufficiently small $\epsilon$ and $\lambda$, the term $(\epsilon+C\lambda)  \|\partial_t u^{(m)}\|_{L^2}^2 $ can be absorbed by the left-hand side. 
Then, substituting \eqref{mathind-1} and \eqref{reg-est} into the inequality above, we obtain
\begin{align*}
&\frac12\|\partial_t u^{(m)}\|_{L^2}^2 + \frac12 \frac{\d}{\d t}\|\nabla u^{(m)}\|_{L^2}^2 +\lambda \|\Delta u^{(m)}\|_{L^2}^2  \\ 
&\le C\|u\|_{L^2} \|\nabla u\|_{L^2} \|\nabla u^{(m)}\|_{L^2}\|\Delta u^{(m)}\|_{L^2} + C\sum_{j=1}^{m-1} t^{-j-m-1} \|\Delta u^{(m-j)}\|_{L^2} \\
&\quad
+C\|u^{(m)}\|_{L^2}\|\nabla u^{(m)}\|_{L^2} \|\nabla u\|_{L^2}\|\Delta u\|_{L^2} \\
&\le C\lambda^{-1}\|u\|_{L^2}^2\|\nabla u\|_{L^2}^2 \|\nabla u^{(m)}\|_{L^2}^2
+\frac{\lambda}{2}\|\Delta u^{(m)}\|_{L^2}^2 
+ C\sum_{j=1}^{m-1} (t^{-2m-2} + t^{-2j} \|\Delta u^{(m-j)}\|_{L^2}^2) \\
&\quad
+C\|\nabla u\|_{L^2}^2\|\nabla u^{(m)}\|_{L^2}^2
+C\|u^{(m)}\|_{L^2}^2 \|\Delta u\|_{L^2} ^2 \\
&\le C\lambda^{-1}\|\nabla u\|_{L^2}^2 \|\nabla u^{(m)}\|_{L^2}^2
+\frac{\lambda}{2}\|\Delta u^{(m)}\|_{L^2}^2 
+ C\sum_{j=1}^{m-1} (t^{-2m-2} + t^{-2j} \|\Delta u^{(m-j)}\|_{L^2}^2) \\
&\quad
+C\|\nabla u\|_{L^2}^2\|\nabla u^{(m)}\|_{L^2}^2
+Ct^{-2m} \|\Delta u\|_{L^2} ^2 .
\end{align*}
%
%
After absorbing $\frac{\lambda}{2}\|\Delta u^{(m)}\|_{L^2}^2 $ by the left-hand side, multiplying the last inequality by $\chi^2$ yields 
\begin{align}
&\frac{\d}{\d t}\|\nabla (\chi u^{(m)})\|_{L^2}^2
+\|\partial_t (\chi u^{(m)})\|_{L^2}^2+\lambda \|\Delta (\chi u^{(m)})\|_{L^2}^2 \notag \\ 
&\le C\chi^2(t)\bigg[ \sum_{j=1}^{m-1}  (t^{-2m-2} + t^{-2j} \|\Delta u^{(m-j)}\|_{L^2}^2) + t^{-2m} \|\Delta u\|_{L^2}^2 \bigg] 
\notag\\
&\quad\, 
+ C\|\nabla u\|_{L^2}^2 \|\nabla (\chi u^{(m)})\|_{L^2}^2 
+ C|\partial_t\chi(t)|\chi(t)\|\nabla u^{(m)}\|_{L^2}^2 + |\partial_t\chi|^2 \| u^{(m)}\|_{L^2}^2
 . \notag 
\end{align}
Then we apply Gronwall's inequality in the time interval $[0,s]$ and using the estimate $|\partial_t\chi(t)|\le Cs^{-1}$. This yields that  
\begin{align*}
\|\nabla (\chi& u^{(m)})\|_{L^\infty(0,s;L^2)}^2
+\|\partial_t (\chi u^{(m)})\|_{L^2(0,s;L^2)}^2 + \|\Delta(\chi u^{(m)})\|_{L^2(0,s;L^2)}^2\\
\le & \exp(C\|\nabla u\|_{L^2(0,s;L^2)}^2) 
\int_{\frac{s}{4}}^s 
C\bigg[ \sum_{j=1}^{m-1}  (t^{-2m-2} + t^{-2j} \|\Delta u^{(m-j)}\|_{L^2}^2) + t^{-2m} \|\Delta u\|_{L^2}^2 \bigg]
\d t\\
&+ \exp(C\|\nabla u\|_{L^2(0,s;L^2)}^2) \int_{\frac{s}{4}}^s Cs^{-1}\|\nabla u^{(m)}\|_{L^2}^2 \d t \\
\le& Cs^{- 2m -1} ,
\end{align*}
where the last inequality uses \eqref{reg-est}. 
Since $s>0$ is arbitrary, choosing $s=t$ in the inequality above yields that 
\begin{align}\label{reg-est-nabla_u}
\|\nabla u^{(m)}\|_{L^\infty(t/2,t;L^2)}
+ \|\partial_t u^{(m)}\|_{L^2(t/2,t;L^2)}+ \|\Delta u^{(m)}\|_{L^2(t/2,t;L^2)}
&\le
Ct^{-m - \frac12}.
\end{align}
Combining \eqref{reg-est} and \eqref{reg-est-nabla_u}, we have
\begin{align}\label{mathind-f}
\|u^{(m)}(t)\|_{L^2} + t^{\frac12}\big( \|\partial_t u^{(m)}\|_{L^2(t/2,t;L^2)} + \|u^{(m)}(t)\|_{H^1} 
+ \|u^{(m)}(t)\|_{L^2(t/2,t;H^2)} \big) \le C t^{-m}, \, \forall\,  t >0 .
\end{align}
This completes the mathematical induction on \eqref{mathind-1}. Hence, \eqref{reg-est-nabla_u} holds for all $m$. 

By substituting estimates \eqref{Est5} and \eqref{mathind-f} into h\eqref{Est2} and considering $m=1$, we furthermore derive that 
\begin{align}\label{Est-chiu-H2}
\|u(t)\|_{H^2}
&\le
C\|\partial_tu(t)\|_{L^2} +
C\|u(t)\|_{L^2} \|u(t)\|_{H^1}^2 
\le Ct^{-1}, \quad \forall\,  t >0 .
\end{align}
From \eqref{H2-udtm-1} we also obtain that 
\begin{align*}
\|\Delta u^{(m)}\|_{L^2}
\le& 
C\|u^{(m+1)} \|_{L^2} 
+
C\sum_{j=0}^m\|u^{(j)}\cdot \nabla u^{(m-j)}\|_{L^2} \notag \\
\le& 
Ct^{-m-1} 
+
C\sum_{j=0}^m\|u^{(j)}\|_{L^4} \|\nabla u^{(m-j)}\|_{L^4} \notag \\
\le& 
Ct^{-m-1} 
+
C\sum_{j=0}^m\|u^{(j)}\|_{L^2}^{\frac12}\|\nabla u^{(j)}\|_{L^2}^{\frac12} \|\nabla u^{(m-j)}\|_{L^2}^{\frac12}\|\Delta u^{(m-j)}\|_{L^2}^{\frac12} \notag \\
\le& 
Ct^{-m-1} 
+
C\sum_{j=0}^m t^{-\frac{ j+m+1}{2}} \|\Delta u^{(m-j)}\|_{L^2}^{\frac12} \notag \\
\le& 
Ct^{-m-1} 
+
C\sum_{j=1}^m t^{-\frac{ j+m+1}{2}} \|\Delta u^{(m-j)}\|_{L^2}^{\frac12}
+ Ct^{-\frac{ m+1}{2}} \|\Delta u^{(m)}\|_{L^2}^{\frac12} .
\end{align*}
Assuming that $\|\Delta u^{(j)}\|_{L^2}\le C t^{-j-1}$ for $j=0,\dots,m-1$ (which holds for $m=1$ in view of \eqref{Est-chiu-H2}), the last inequality furthermore implies that 
\begin{align}\label{H2-u-dtm-f}
\|\Delta u^{(m)}\|_{L^2}
\le& 
C t^{-m-1}. 
\end{align}
By mathematical induction, \eqref{H2-u-dtm-f} holds for all $m\ge 0$. 

Combining \eqref{mathind-f} and \eqref{H2-u-dtm-f}, we obtain the desired result of Lemma \ref{lemma:regularity-u}. 
\hfill\endproof

\section{Proof of (\ref{converg-utauh-0})--(\ref{converg-utauh-2})}\label{section:AppendixB}

\noindent From \eqref{semi-Euler} we see that  
\begin{align*}
\| A_h u_h^n \|_{L^2}
&\le
\bigg\|\frac{ u_h^n - u_h^{n-1} }{\tau_n} \bigg\|_{L^2}
+\|u_h^{n-1}\|_{L^4} \|\nabla u_h^n\|_{L^4} \notag \\
&\le
\bigg\|\frac{ u_h^n - u_h^{n-1} }{\tau_n} \bigg\|_{L^2}
+\|u_h^{n-1}\|_{L^2}^{\frac12}  \|u_h^{n-1}\|_{H^1}^{\frac12} \|u_h^{n}\|_{H^1}^{\frac12} \|A_hu_h^n\|_{L^2}^{\frac12} \notag \\
&\hspace{82pt}\mbox{(here \eqref{L4-L2-H1} and Lemma \ref{Lemma:discrete_W14} are used)} \\ 
&\le
\bigg\|\frac{ u_h^n - u_h^{n-1} }{\tau_n} \bigg\|_{L^2}
+\frac12\|u_h^{n-1}\|_{L^2} (\|u_h^{n-1}\|_{H^1}^2+\|u_h^{n}\|_{H^1}^2)
+\frac12 \| A_h u_h^n \|_{L^2}. 
\end{align*}
As a result, we have 
\begin{align}\label{Ahuhn-L2-1}
\| A_h u_h^n \|_{L^2}
&\le
2\bigg\|\frac{u_h^n - u_h^{n-1}}{\tau_n} \bigg\|_{L^2} 
+\|u_h^{n-1}\|_{L^2} (\|u_h^{n-1}\|_{H^1}^2+\|u_h^{n}\|_{H^1}^2) . 
\end{align} 
Testing \eqref{semi-Euler} by $(u_h^n - u_h^{n-1})/ \tau_n$ yields 
\begin{align*}
&\bigg\|\frac{u_h^n - u_h^{n-1}}{\tau_n}\bigg\|_{L^2}^2
+\frac{\|\nabla u_h^n\|_{L^2}^2-\|\nabla u_h^{n-1}\|_{L^2}^2}{2\tau_n}
+\frac{\tau_n}{2}\bigg\|\frac{\nabla(u_h^n-u_h^{n-1})}{\tau_n}\bigg\|_{L^2}^2 \notag \\
&= -
\bigg(u_h^{n-1}\cdot\nabla u_h^n,   \frac{u_h^n - u_h^{n-1}}{\tau_n}\bigg) \notag \\
&\le \|u_h^{n-1}\|_{L^4} \|\nabla u_h^n\|_{L^4} 
\bigg\|\frac{u_h^n - u_h^{n-1}}{\tau_n}\bigg\|_{L^2} \notag \\
&\le \|u_h^{n-1}\|_{L^2}^{\frac12} \|u_h^{n-1}\|_{H^1}^{\frac12} \|u_h^{n}\|_{H^1}^{\frac12} \|A_hu_h^n\|_{L^2}^{\frac12} 
\bigg\|\frac{u_h^n - u_h^{n-1}}{\tau_n}\bigg\|_{L^2} \notag \\
&\hspace{65pt}\mbox{(again, \eqref{L4-L2-H1} and Lemma \ref{Lemma:discrete_W14} are used)} \\ 
&\le C\|u_h^{n-1}\|_{L^2}^{\frac12} \|u_h^{n-1}\|_{H^1}^{\frac12} \|u_h^{n}\|_{H^1}^{\frac12} \bigg\|\frac{u_h^n - u_h^{n-1}}{\tau_n}\bigg\|_{L^2}^{\frac32} 
+C\|u_h^{n-1}\|_{L^2} \|u_h^{n-1}\|_{H^1}^{\frac32} \|u_h^{n}\|_{H^1}^{\frac12} \bigg\|\frac{u_h^n - u_h^{n-1}}{\tau_n}\bigg\|_{L^2} \\
&\quad 
+C\|u_h^{n-1}\|_{L^2} \|u_h^{n-1}\|_{H^1}^{\frac12} \|u_h^{n}\|_{H^1}^{\frac32} \bigg\|\frac{u_h^n - u_h^{n-1}}{\tau_n}\bigg\|_{L^2} 
\qquad \mbox{(here \eqref{Ahuhn-L2-1} is used)} \notag \\
&\le
C\|u_h^{n-1}\|_{L^2}^2 (\| u_h^{n-1}\|_{H^1}^4+\| u_h^{n}\|_{H^1}^4) 
+ \frac12 \bigg\|\frac{u_h^n - u_h^{n-1}}{\tau_n}\bigg\|_{L^2}^2 . 
\end{align*}
The last term can be absorbed by the left-hand side. Then, multiplying the result by the smooth cut-off function $\chi(t_n)$ in \eqref{def-chi-t-1}, and using the estimate $\max\limits_{1\le n\le N}\|u_h^{n}\|_{L^2}\le C$, we obtain 
\begin{align*} 
&\chi(t_n)\bigg\|\frac{u_h^n - u_h^{n-1}}{\tau_n}\bigg\|_{L^2}^2 
+\frac{\chi(t_n)\|\nabla u_h^n\|_{L^2}^2-\chi(t_{n-1})\|\nabla u_h^{n-1}\|_{L^2}^2}{\tau_n} \notag \\ 
&\le
C \chi(t_n)( \|u_h^{n-1}\|_{H^1}^4 + \|u_h^{n}\|_{H^1}^4  )
+\frac{\chi(t_n)-\chi(t_{n-1})}{\tau_n} \|\nabla u_h^{n-1}\|_{L^2}^2  \notag \\
&\le
C\|\nabla u_h^{n-1}\|_{L^2}^2 \chi(t_n)\|\nabla u_h^{n-1}\|_{L^2}^2 
+C\|\nabla u_h^{n}\|_{L^2}^2 \chi(t_n)\|\nabla u_h^{n}\|_{L^2}^2  
+Ct_{m}^{-1} \|\nabla u_h^{n-1}\|_{L^2}^2 
\end{align*}
for $n\ge 2$ and $m\ge 4$ (so $\chi(t_1)=0$). 
By applying Gronwall's inequality, we obtain 
\begin{align*}
&\sum_{n=2}^N \tau_n
\chi(t_n) \bigg\|\frac{u_h^n - u_h^{n-1}}{\tau_n}\bigg\|_{L^2}^2
+\max_{2\le n\le N} \chi(t_n) \|\nabla u_h^n\|_{L^2}^2 \notag \\
&\le 
\exp\bigg(C\sum_{n=2}^N \tau_n  (\|\nabla u_h^{n-1}\|_{L^2}^2+\|\nabla u_h^{n}\|_{L^2}^2)\bigg)
Ct_{m}^{-1}  \sum_{n=2}^N \tau_n \|\nabla u_h^{n-1}\|_{L^2}^2 \notag \\ 
&\le C t_m^{-1} ,
\quad\mbox{where we have used the basic energy estimate \eqref{FD-basic-energy}}.
\end{align*}
Substituting this into \eqref{Ahuhn-L2-1}, we also obtain 
\begin{align*}
&\sum_{n=2}^N \tau_n \chi(t_n) \|A_hu_h^n\|_{L^2}^2 \le C t_m^{-1} . 
\end{align*}
To summarize, the two estimates above imply that 
\begin{align}
\label{FD-H1L2-LinftyH1}
&\sum_{n=[\frac{m}{2}]+1}^m \tau_n
\bigg( \|A_hu_h^n\|_{L^2}^2 + 
\bigg\|\frac{u_h^n - u_h^{n-1}}{\tau_n}\bigg\|_{L^2}^2 \bigg) 
+\max_{[\frac{m}{2}]< n\le m}  \|\nabla u_h^n\|_{L^2}^2 
\le C t_m^{-1}  
\quad\mbox{for}\,\,\, 4\le m\le N . 
\end{align} 

Let $u_{\tau,h}(t)$ be a piecewise linear function in time, defined by 
$$
u_{\tau,h}(t)=\frac{t_n-t}{\tau_n}u_h^{n-1}+\frac{t-t_{n-1}}{\tau_n}u_h^{n} \quad\mbox{for}\,\,\, t\in (t_{n-1},t_n] .
$$
Let $u_{\tau,h}^+(t)$ and $u_{\tau,h}^-(t)$ be piecewise constant functions in time, defined by 
$$
u_{\tau,h}^+(t)=u_h^{n}
\quad\mbox{and}\quad
u_{\tau,h}^-(t)=u_h^{n-1}  \quad\mbox{for}\,\,\, t\in (t_{n-1},t_n] .
$$
Then \eqref{FD-basic-energy} and \eqref{FD-H1L2-LinftyH1} imply that the following quantities remain bounded as $\tau,h\rightarrow 0$: 
\begin{align}
&\|u_{\tau,h}\|_{L^\infty(0,T;L^2)}
+\|u_{\tau,h}\|_{L^2(0,T;H^1)} 
+\|u_{\tau,h}^{\pm}\|_{L^\infty(0,T;L^2)}
+\|u_{\tau,h}^{\pm}\|_{L^2(0,T;H^1)} 
\le C ,\label{FD-LinftyL2-H1L2} \\
&\|\partial_tu_{\tau,h}\|_{L^2(T_1,T_2;L^2)}
+\|A_hu_{\tau,h}\|_{L^2(T_1,T_2;L^2)}
+\|u_{\tau,h}\|_{L^\infty(T_1,T_2;H^1)}
\le C ,\label{FD-H1L2-LinftyH1-L2H2} \\
&\|A_hu_{\tau,h}^{\pm}\|_{L^2(T_1,T_2;L^2)}
+\|u_{\tau,h}^{\pm}\|_{L^\infty(T_1,T_2;H^1)} 
\le C ,  \label{FD-Ahuh-L2L2-} 
\end{align}
for arbitrary fixed constants $T_1$ and $T_2$ such that $0<T_1<T_2\le T$. 
Since $$L^\infty(0,T;L^2(\Omega)^2)\cap L^2(0,T;H^1(\Omega)^2)\hookrightarrow L^4(0,T;L^4(\Omega)^2) ,$$ from \eqref{semi-Euler} we also derive that 
\begin{align}\label{dtu-tau-h-L2H-1}
\|\partial_tu_{\tau,h}\|_{L^2(0,T;\dot H^{-1})} 
&\le \|A_h u_{\tau,h}^+ \|_{L^2(0,T;\dot H^{-1})} 
+ C\|P_{X_h}(u_{\tau,h}^ -\cdot\nabla u_{\tau,h}^+)\|_{L^2(0,T;\dot H^{-1})} \notag \\ 
&\le C\|u_{\tau,h}^+ \|_{L^2(0,T;H^{1})} 
+ C\|u_{\tau,h}^ -\cdot\nabla u_{\tau,h}^+\|_{L^2(0,T;H^{-1})} \notag \\ 
&\le C\|u_{\tau,h}^+ \|_{L^2(0,T;H^{1})} 
+C\|u_{\tau,h}^-\|_{L^4(0,T;L^4)} \|u_{\tau,h}^+\|_{L^4(0,T;L^4)} \notag \\
&\le C. 
\end{align}
From \eqref{FD-LinftyL2-H1L2} and \eqref{dtu-tau-h-L2H-1} we see that 
$u_{\tau,h}$ is uniformly (with respect to $\tau$ and $h$) bounded in  $L^\infty(0,T; L^2(\Omega)^2)\cap L^2(0,T;\dot H^1_0)\cap H^1(0,T;\dot H^{-1})\hookrightarrow L^4(0,T;L^4(\Omega)^2)$, compactly embedded into $L^3(0,T;L^3(\Omega)^2)$ (see the Aubin--Lions--Simon theorem in \cite[Theorem 7]{simon-1986-compact}).
In the meantime, $u_{\tau,h}$ is uniformly bounded in  $H^1(T_1,T_2;L^2(\Omega)^2) \cap L^\infty(T_1,T_2;H^1(\Omega)^2)$, which is compactly embedded into $C([T_1,T_2];L^2(\Omega)^2)$ (cf. \cite[Theorem 5]{simon-1986-compact}) .
As a result, for any sequence $(\tau_j,h_j)\rightarrow (0,0)$ there exists a subsequence, also denoted by $(\tau_j,h_j)$ for the simplicity of notation, such that 
\begin{align}\label{subsequence-converg}
\begin{aligned}
&u_{\tau_j,h_j} \rightarrow u \quad\mbox{weakly$^*$ in}\,\,\, L^\infty(0,T;L^2(\Omega)^2), \\
&u_{\tau_j,h_j} \rightarrow u \quad\mbox{weakly in}\,\,\, L^2(0,T;\dot H^1_0), \\
&u_{\tau_j,h_j} \rightarrow u \quad\mbox{strong in}\,\,\, L^3(0,T;L^3(\Omega)^2) , \\
&u_{\tau_j,h_j} \rightarrow u \quad\mbox{weakly in}\,\,\, H^1(T_1,T_2;L^2(\Omega)^2) &&\mbox{for arbitrary $0<T_1<T_2\le T$}, \\
&u_{\tau_j,h_j} \rightarrow u \quad\mbox{weakly$^*$ in}\,\,\, L^\infty(T_1,T_2; H^1(\Omega)^2 ) &&\mbox{for arbitrary $0<T_1<T_2\le T$}, \\
&u_{\tau_j,h_j} \rightarrow u \quad\mbox{strongly in}\,\,\, C([T_1,T_2];L^2(\Omega)^2)
&&\mbox{for arbitrary $0<T_1<T_2\le T$},
\end{aligned}
\end{align}
for some function 
\begin{align*}
u
\in L^\infty(0,T;L^2(\Omega)^2)\cap L^2(0,T;\dot H^1_0) & \cap H^1(T_1,T_2;L^2(\Omega)^2)\cap L^\infty(T_1,T_2;H^1(\Omega)^2) \\
&\hookrightarrow C^{\frac12}([T_1,T_2];L^2(\Omega)^2) . 
\end{align*}

From \eqref{FD-LinftyL2-H1L2} we see that the set of functions $\{u_{\tau,h}(\cdot,t):t\in[0,T]\}$ is uniformly (with respect to $\tau$ and $h$) bounded in $\dot L^2$ and therefore precompact in $\dot H^{-1}$. From \eqref{dtu-tau-h-L2H-1} we also know that $u_{\tau,h}$ is uniformly bounded in $H^1(0,T;\dot H^{-1})\hookrightarrow C^{\frac12}([0,T];\dot H^{-1})$, which implies that the function $u_{\tau,h}:[0,T]\rightarrow \dot H^{-1}$ is equicontinuous with respect to $t\in[0,T]$. According to the Arzela--Ascoli theorem \cite[Chapter 7, Theorem 17]{Kelley-1975}, the functions $u_{\tau,h}$ are precompact in $C([0,T];\dot H^{-1})$ and therefore a subsequence $u_{\tau_j,h_j}$ satisfies that 
\begin{align}\label{converg-CH-1}
\mbox{$u_{\tau_j,h_j}$ converges to $u$ in $C([0,T];\dot H^{-1})$, 
with $u(0)=\lim\limits_{j\rightarrow \infty} P_{X_{h_j}}u^0 = u^0$ in $\dot H^{-1}$. }
\end{align}

It remains to prove that the limit function $u$ is the unique weak solution of the NS equations (thus the limit function is independent of the choice of the subsequence $\tau_j,h_j\rightarrow 0$). This would imply that $u_{\tau,h}$ converges to $u$ as $\tau,h\rightarrow 0$ in the sense of \eqref{subsequence-converg}--\eqref{converg-CH-1} without necessarily passing to a subsequence.

For $t\in(t_{n-1},t_n]\subset [T_1,T_2]$ we have 
$$
\|u_{\tau,h}^+-u_{\tau,h}\|_{L^2}
\le
\|u_h^n-u_h^{n-1}\|_{L^2} 
\le
\int_{t_{n-1}}^{t_n} \|\partial_tu_{\tau,h}\|_{L^2}\d t
\le 
C\tau_n^{\frac12}\|\partial_tu_{\tau,h}\|_{L^2(T_1,T_2;L^2)} . 
$$
As a result of \eqref{FD-H1L2-LinftyH1-L2H2}, we obtain 
$$
\|u_{\tau,h}^+-u_{\tau,h}\|_{L^\infty(T_1,T_2;L^2)}\rightarrow 0\quad\mbox{ 
as $\tau\rightarrow 0$} ,
$$
and similarly, 
$$
\|u_{\tau,h}^--u_{\tau,h}\|_{L^\infty(T_1,T_2;L^2)}\rightarrow 0\quad\mbox{ 
as $\tau\rightarrow 0$} .
$$
Hence, there exists a subsequence, also denoted by $(\tau_j,h_j)$ for the simplicity of notation, such that 
\begin{align*}
&u_{\tau_j,h_j}^{\pm} \rightarrow u \quad\mbox{strongly in}\,\,\, L^\infty(T_1,T_2; L^2(\Omega)^2) . 
\end{align*}
Note that 
\begin{align*}
\int_{0}^T \|u_{\tau,h}^{\pm}-u_{\tau,h}\|_{L^2}^2\d t
\le&
C \sum_{n=1}^N \tau_n \|u_h^n-u_h^{n-1}\|_{L^2}^2\\
\le&
C\tau_1 \|u_h^n-u_h^{n-1}\|_{L^2}^2  + C \sum_{n=2}^N \tau_n \|u_h^n-u_h^{n-1}\|_{\dot H^{-1}}\|u_h^n-u_h^{n-1}\|_{\dot H^1_0} \\
\le&
C\tau_1 + C \sum_{n=2}^N \tau_n^2 \|\partial_tu_{\tau,h}|_{[t_{n-1},t_n]}\|_{\dot H^{-1}}\|u_h^n-u_h^{n-1}\|_{H^1} \\
\le&
C\tau_1 + C\tau \|\partial_tu_{\tau,h}\|_{L^2(0,T;\dot H^{-1})} \bigg(\sum_{n=2}^N \tau_n (\|u_h^n\|_{H^{1}}^2+\|u_h^{n-1}\|_{H^{1}}^2) \bigg)^{\frac12}  \\
\le&C\tau \quad\mbox{(here \eqref{FD-basic-energy} and \eqref{dtu-tau-h-L2H-1} are used)}
\end{align*}
which immediately yields
$$
\|u_{\tau,h}^{\pm}-u_{\tau,h}\|_{L^2(0,T;L^2)}\rightarrow 0\quad\mbox{ 
as $\tau\rightarrow 0$} .
$$
Since $u_{\tau_j,h_j} \rightarrow u$ strongly in $L^2(0,T;L^2(\Omega)^2)$ as shown in \eqref{subsequence-converg}, it follows that 
\begin{align*}
&u_{\tau_j,h_j}^{\pm} \rightarrow u \quad\mbox{strongly in}\,\,\, L^2(0, T; L^2(\Omega)^2) . 
\end{align*}
From \eqref{FD-LinftyL2-H1L2} and \eqref{subsequence-converg} we know that, by passing to a subsequence if necessary, 
\begin{align*}
&u_{\tau_j,h_j}^{\pm} \rightarrow u \quad\mbox{weakly$^*$ in}\,\,\, L^\infty(0,T;L^2(\Omega)^2), \\
&u_{\tau_j,h_j}^{+} \rightarrow u \quad\mbox{weakly in}\,\,\, L^2(0,T;H^1(\Omega)^2) .
\end{align*}

Now, testing \eqref{semi-Euler} by $v_h\in C([0,T];X_h)$ and integrating the result in time, we have 
\begin{align*}
&\int_0^T (\partial_t  u_{\tau,h} ,v_h) \d t
+ \int_0^T(\nabla u_{\tau,h}^+,\nabla v_h) \d t 
+ \int_0^T (u_{\tau,h}^-\cdot\nabla u_{\tau,h}^+,v_h) \d t 
= 0  . 
\end{align*}
For any given $v\in C([0,T];\dot H^1_0\cap H^2(\Omega)^2)$, we let $v_h=\Pi_hv$ (see \eqref{Xh-X-approx}), which would converge to $v$ strongly in $C([0,T];\dot H^1)$ as $h\rightarrow 0$. Then the equation above implies that
\begin{align}\label{weak-form-u-tau-h}
&  \int_0^T (\partial_t u_{\tau,h} ,  v) \d t
+ \int_0^T(\nabla u_{\tau,h}^+,\nabla v) \d t 
+ \int_0^T (u_{\tau,h}^-\cdot\nabla u_{\tau,h}^+,v) \d t \notag \\ 
&= 
\int_0^T (\partial_t u_{\tau,h} , v-v_h) \d t
+ \int_0^T(\nabla u_{\tau,h}^+,\nabla(v-v_h)) \d t 
+ \int_0^T (u_{\tau,h}^-\cdot\nabla u_{\tau,h}^+,(v-v_h)) \d t \notag \\
&=:
J_1^h(v)+J_2^h(v)+J_3^h(v) ,
\end{align}
where 
\begin{align*}
|J_1^h(v)|
&= 
\bigg|\int_0^T (\partial_tu_{\tau,h} , v-v_h ) \d t\bigg| \notag \\
&\le
C\|\partial_t u_{\tau,h}\|_{L^2(0,T;\dot H^{-1})} \|v-v_h\|_{L^2(0,T;H^1)}
\rightarrow 0 ,\\
|J_2^h(v)|
&\le
C\|u_{\tau,h}^+\|_{L^2(0,T;H^1)} \|v-v_h\|_{L^2(0,T;H^1)}
\rightarrow 0 ,\\
|J_3^h(v)|
&= \bigg| \int_0^T (u_{\tau,h}^+,u_{\tau,h}^-\cdot\nabla  (v-v_h)) \d t \bigg| \\
&\le
C
\|u_{\tau,h}^+\|_{L^2(0,T;L^4)} \|u_{\tau,h}^-\|_{L^2(0,T;L^4)} 
\|\nabla(v-v_h)\|_{L^\infty(0,T;L^2)} \\
&\le
C
\|u_{\tau,h}^+\|_{L^2(0,T;H^1)} \|u_{\tau,h}^-\|_{L^2(0,T;H^1)} 
\|v-v_h\|_{L^\infty(0,T;H^1)} 
\rightarrow 0 . 
\end{align*}
Since $u_{\tau_j,h_j}, u_{\tau_j,h_j}^{\pm} \rightarrow u$ weakly in $L^2(0,T;\dot H^1_0)$ and $\partial_t u_{\tau_j,h_j} \rightarrow \partial_t u$ weakly in $L^2(0,T;\dot H^{-1})$, it follows that 
$$
\int_0^T (\partial_t  u_{\tau,h} ,v) \d t
+ \int_0^T(\nabla u_{\tau,h}^+,\nabla v) \d t 
\rightarrow
 \int_0^T (\partial_t  u , v) \d t
+ \int_0^T(\nabla u,\nabla v) \d t .
$$
Since $u_{\tau_j,h_j}^-\rightarrow u$ strongly in $L^2(0,T;L^2(\Omega)^2)$ and $\nabla u_{\tau_j,h_j}^{+} \rightarrow \nabla u$ weakly in $L^2(0,T;L^2(\Omega)^2)$, it follows that  $u_{\tau_j,h_j}^-\cdot \nabla u_{\tau_j,h_j}^+$ convergence weakly in $L^1(0,T;L^1(\Omega)^2)$ and therefore
$$
\int_0^T (u_{\tau,h}^-\cdot\nabla u_{\tau,h}^+,v) \d t 
\rightarrow
\int_0^T (u\cdot\nabla u,v) \d t, \,\,\, \forall \,\,v \in C([0, T]; \dot H^1_0\cap H^2(\Omega)^2) 
\hookrightarrow C([0, T];L^\infty(\Omega)^2) . 
$$
By using these results and passing to the limit $(\tau,h)=(\tau_j,h_j)\rightarrow (0,0)$ in \eqref{weak-form-u-tau-h}, we obtain that the limit function $u$ satisfies the following weak form: 
\begin{align}\label{weak-form-NS}
&\int_0^T (\partial_t  u , v) \d t
+ \int_0^T(\nabla u ,\nabla  v) \d t 
+ \int_0^T (u\cdot\nabla u,v) \d t =0
\quad\forall \,v \in C([0, T]; \dot H^1_0\cap H^2(\Omega)^2) . 
\end{align}
Note that $\partial_tu\in L^2(0,T;\dot H^{-1})$, $\nabla u\in L^2(0,T;L^2(\Omega)^2)$ and 
$$u\cdot\nabla u\in L^{\frac43}(0,T;L^{\frac43}(\Omega)^2)\subset L^{\frac43}(0,T; H^{-1}(\Omega)^2 ) \subset L^{\frac43}(0,T;\dot H^{-1}) .$$ 
Since $(P_{X}(u\cdot\nabla u),v) = (u\cdot\nabla u,v)$ for $v\in \dot H^1$, it follows that $P_{X}(u\cdot\nabla u)\in L^{\frac43}(0,T;\dot H^{-1}) $. Therefore, $\partial_t  u - Au + P_{X}(u\cdot\nabla u)\in L^{\frac43}(0,T; \dot H^{-1} )$ and \eqref{weak-form-NS} implies that 
$$
\int_0^T \big( \partial_t  u - Au + P_{X}(u\cdot\nabla u) ,v \big) \d t =0 
\quad\forall \, v \in C([0, T]; \dot H^1_0) . 
$$
This implies that 
\begin{align}\label{weak-form-NS-2}
\partial_t  u - Au + P_{X}(u\cdot\nabla u)=0 \,\,\,\,\, \mbox{in}\,\,\, \dot H^{-1} \,\,\,\mbox{a.e.}\,\,\, t\in(0,T]. 
\end{align}

From \eqref{FD-LinftyL2-H1L2} and \eqref{dtu-tau-h-L2H-1} we conclude that, as the limit of $u_{\tau_j,h_j}$ when $j\rightarrow\infty$, the limit function $u$ must satisfy 
\begin{align}\label{weak-form-NS-reg}
u\in L^\infty(0,T;\dot L^2)\cap L^2(0,T;\dot H^1_0)\cap H^1(0,T;\dot H^{-1})\hookrightarrow C([0,T];\dot L^2) . 
\end{align}
According to \cite[Problem 3.2 and Theorem 3.2 of Chapter 3]{Temam-1977}, 
the equation \eqref{weak-form-NS-2} and the regularity result \eqref{weak-form-NS-reg} imply that $u$ must be the unique weak solution of the 2D NS equation. 

Since every sequence $u_{\tau_j,h_j}$ contains a subsequence that converges to the unique weak solution $u$ in the sense of \eqref{subsequence-converg}--\eqref{converg-CH-1}, it follows that $u_{\tau,h}$ converges to $u$ as $\tau,h\rightarrow 0$ (without passing to a subsequence). 
Then \eqref{subsequence-converg}--\eqref{converg-CH-1} imply the desired results in (\ref{converg-utauh-0})--(\ref{converg-utauh-2}). 
\hfill\endproof
\bigskip\bigskip

\end{document}